\newtheorem{theo}{Theorem}[section]
\newtheorem{lemma}[theo]{Lemma}
\newtheorem{defi}[theo]{Definition}
\newtheorem{prop}[theo]{Proposition}
\newtheorem{conj}[theo]{Conjecture}
\newtheorem{cor}[theo]{Corollary}
\numberwithin{equation}{section}
\def\R{\mathbb{R}}
\def\C{\mathbb{C}}
\def\Z{\mathbb{Z}}
\def\Q{\mathbb{Q}}
\def\bR{{\mathbf R}}
\def\pre-tr{\operatorname{pre-tr}}
\def\Hom{\operatorname{Hom}}
\newcommand{\Int}{\operatorname{Int}}
\newcommand{\Area}{\operatorname{Area}}
\newcommand{\Vol}{\operatorname{Vol}}
\newcommand{\cO}{{\mathcal O}}
\newcommand{\supp}{\operatorname{Supp}}
\newcommand{\Ker}{\operatorname{Ker}}
\newcommand{\Pic}{\operatorname{Pic}}
\newcommand{\coker}{\operatorname{Coker}}
\newcommand{\rank}{\operatorname{rank}}
\newcommand{\rk}{\operatorname{rk}}
\newcommand{\conv}{\operatorname{conv}}
\newcommand{\Sign}{\rm Sign}
\newcommand{\Gale}{\operatorname{Gale}}
\newcommand{\id}{\operatorname{id}}
\title[Maximal lengths of exceptional collections of line bundles]
{Maximal lengths of exceptional collections of line bundles}
\author{Alexander I. Efimov}
\address{Steklov Mathematical Institute of RAS, Gubkin str. 8, GSP-1, Moscow 119991, Russia} \email{efimov@mccme.ru}
\thanks{The author was partially supported by
the Moebius Contest Foundation for Young Scientists, and RFBR (grant 4713.2010.1).}
\begin{document}

\begin{abstract} In this paper we construct infinitely many examples of toric Fano varieties with Picard number three, which do not admit full exceptional collections of line bundles. In particular, this disproves King's conjecture for toric Fano varieties.

More generally, we prove that for any constant $c>\frac34$ there exist infinitely many toric Fano varieties $Y$ with Picard number three,
such that the maximal length of exceptional collection of line bundles on $Y$ is strictly less than $c\rk K_0(Y).$ To obtain varieties without exceptional collections of line bundles,
it suffices to put $c=1.$

On the other hand, we prove that for any toric nef-Fano DM stack $Y$ with Picard number three, there exists a strong exceptional collection of line bundles on $Y$
of length at least $\frac34 \rk K_0(Y).$ The constant $\frac34$ is thus maximal with this property.
\end{abstract}

\maketitle


\section{Introduction}

A conjecture of King \cite{Ki} claims that each smooth projective toric variety has a full strong exceptional collection of line bundles.
It was disproved \cite{HP1, HP2, Mi} in infinitely many cases. However, all the counter-examples were not nef-Fano. Borisov and Hua
have proposed the following modification (and a generalization).

\begin{conj}\label{Borisov_Hua}Every smooth nef-Fano toric DM stack possesses a
full strong exceptional collection of line bundles.\end{conj}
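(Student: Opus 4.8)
The plan is to produce, on an arbitrary smooth nef-Fano toric DM stack $Y$, a tilting bundle that is a direct sum of line bundles, which is the same data as a full strong exceptional collection $L_1,\dots,L_N$ of line bundles with $N=\rk K_0(Y)$. The natural candidate on a toric stack is the collection of line-bundle summands of a toric-Frobenius pushforward $F_{m,*}\cO_Y$, where $F_m\colon Y\to Y$ is the finite flat morphism induced by multiplication by $m$ on the cocharacter lattice; for $m$ large this pushforward splits as $\bigoplus_i L_i$ with the $L_i$ indexed by a fundamental domain of $\Pic(Y)$. With this candidate in hand, the conjecture reduces to two independent checks: (a) the $L_i$ generate $\db{\coh Y}$ (fullness), and (b) $\Ext^{>0}(L_i,L_j)=0$ for all $i,j$ (strongness), after which exceptionality follows from simplicity of line bundles together with a triangular ordering of the $\Hom$'s.

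For fullness I would use that $F_m$ is finite and faithfully flat, so $\cO_Y$ is a direct summand of $F_m^{*}F_{m,*}\cO_Y$ and the summands $L_i$ generate; equivalently, one builds a Bondal-type resolution of the diagonal on $Y\times Y$ whose terms are direct sums of external products $L_i^{\vee}\boxtimes L_j$, which yields generation and simultaneously controls all the relevant $\Hom$-spaces. For strongness the key is that toric cohomology $H^{*}(Y,L)$ is purely combinatorial, computed as local cohomology of the Cox ring along the irrelevant ideal, i.e.\ by the lattice points of the fan $\Sigma$. Thus each vanishing in (b) becomes the statement that the difference class $L_j\otimes L_i^{\vee}$ lies in a region of $\Pic(Y)$ where higher cohomology vanishes. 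Here the nef-Fano hypothesis enters decisively: nefness of $-K_Y$ pins down the anticanonical polytope, and choosing the fundamental domain inside a translate/scaling of that polytope should force every difference $L_j\otimes L_i^{\vee}$ into a nef-controlled chamber, where toric Kawamata--Viehweg vanishing kills the positive $\Ext$'s.

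The step I expect to be the main obstacle is reconciling (a) with (b), and I anticipate the whole argument concentrating there. Fullness demands exactly $N=\rk K_0(Y)$ distinct classes, equal to the number of maximal cones of $\Sigma$, arranged so that their external products resolve the diagonal; strongness confines the admissible classes to a narrow cohomologically-vanishing region whose effective size is governed by the width of the nef/anticanonical polytope. These two requirements pull in opposite directions: when the number of maximal cones grows relative to that width, no fundamental domain — and in particular no $F_{m,*}\cO_Y$ — can be simultaneously generating and $\Ext$-acyclic, so the diagonal resolution by these line bundles fails to close up. To rescue the argument I would fall back on a toric MMP factorization of $Y$, gluing collections through the semiorthogonal decompositions of blow-ups and projective-bundle contractions and correcting strongness by mutations; the difficulty is that nef-Fano-ness is not preserved under toric blow-ups, which blocks a clean induction within the relevant class. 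Making the tension between the generation count and the vanishing region quantitative — i.e.\ bounding how large the full $\Ext$-acyclic fundamental domain can be against $\rk K_0(Y)$ — is the crux on which the conjecture stands or falls.
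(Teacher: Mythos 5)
You are attempting to prove a statement that is false, and this paper's main purpose is to \emph{disprove} it: there is no proof of Conjecture \ref{Borisov_Hua} here to compare yours against. Theorem \ref{<c} produces, for every constant $c>\frac34$, infinitely many toric Fano varieties $Y_{n,k,a}$ with Picard number three whose maximal length $l(Y_{n,k,a})$ of an exceptional collection of line bundles --- not even required to be strong --- is strictly less than $c\,\rk K_0(Y_{n,k,a})$, and Theorem \ref{counterex} makes this explicit: for $a=1$, $n=16$, $k\geq 386$ there is no full exceptional collection of line bundles at all. Since a full exceptional collection of line bundles would have length exactly $\rk K_0(Y)$, no argument along your lines can close up on these varieties. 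Your proposal in fact breaks precisely at the point you yourself flagged as the crux: the quantitative tension between the generation count $\rk K_0(Y)$ (given by \eqref{rk_K_0} as a sum of products $|X_i|\cdot|X_{i+1}|\cdot|X_{i+3}|$ over the Batyrev decomposition) and the size of the region of $\Pic(Y)$ avoiding the forbidden sets $K_I$ of Corollary \ref{non-trivial_cohom}. Note moreover that for mere exceptionality one must avoid \emph{all} cohomology of the differences $L_jL_i^{-1}$, not just the higher cohomology your vanishing argument targets, which is why the paper's obstruction kills non-strong collections too. Concretely, on $Y_{n,k,a}$ the proof of Theorem \ref{<c} (Steps 1--5, via pigeonhole on the $z$-coordinate and Pick's theorem on slices) traps any exceptional sequence in a slab containing only about $\left(\frac34+\epsilon\right)n^2k$ admissible lattice points, while $\rk K_0$ grows like $n^2k$ by \eqref{K_0(Y_n,k,a)}; so no fundamental domain for a Frobenius pushforward $F_{m,*}\cO_Y$, or any other indexing set, can be simultaneously generating and exceptional.

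Your intuition is vindicated in one direction: the paper's Theorem \ref{at least} shows that for Picard number three the $\Ext$-acyclic region always does support a strong exceptional collection of length at least $\frac34\rk K_0(Y)$ (via a centrally symmetric polytope $P\subset\Pic_{\R}(Y)$ and a Fubini averaging argument), and Theorem \ref{<c} shows the constant $\frac34$ is optimal --- so the ``tension'' you identified is real, sharp, and resolved in the negative for fullness. Your proposed rescue via toric MMP and mutations cannot repair this: the obstruction is not a failure of technique (e.g.\ nef-Fano not being preserved under blow-ups) but the falsity of the statement itself, witnessed by a rank count in $K_0$ against a cohomological-vanishing region that is provably too small.
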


They proved Conjecture \ref{Borisov_Hua} \cite{BH} in the case of Fano stacks for which either Picard number or dimension is at most two.
The case of nef-Fano Del Pezzo stacks was further treated in \cite{IU}.

The weaker version of conjecture was proposed by Costa and Mir\'o-Roig:

\begin{conj}\label{Miro-Roig}For any smooth, complete Fano toric variety there exists
a full, strongly exceptional collection of line bundles.\end{conj}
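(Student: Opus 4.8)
The plan is to prove existence by induction on the Picard number $\rho$, the base cases $\rho \le 2$ being exactly the Borisov--Hua theorem quoted above. For the inductive step the natural tool is the equivariant minimal model program together with Orlov's projective-bundle and blow-up semiorthogonal decompositions, which break $\db{\coh X}$ into derived categories of toric varieties of smaller Picard number. If those pieces carry full strong exceptional collections of line bundles and the gluing produces no higher $\Ext$ between them, the collections concatenate into one of length $\rk K_0(X)$. The weak point of this reduction is that the pieces produced by the toric MMP need not themselves be Fano, so the induction hypothesis cannot be invoked verbatim and the inter-piece extensions must be controlled by hand; this already concentrates the difficulty at small Picard number, and I would therefore attack the first genuinely new case, $\rho = 3$, directly.

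For $\rho = 3$ I would pass to the Cox homogeneous coordinate ring, writing $X = (\bbC^{\Sigma(1)} \setminus Z(\Sigma))/G$, so that line bundles are indexed by characters of $G$ with $\Pic(X) \cong \bbZ^{3}$, and $\rk K_0(X)$ equals the number of maximal cones of $\Sigma$. The essential tool is the Demazure-type combinatorial formula for sheaf cohomology: for a difference $D = \cL_j \otimes \cL_i^{-1}$ the group $\Ext^k(\cL_i, \cL_j) = H^k(X, D)$ is the reduced cohomology of an explicit lattice subcomplex of $|\Sigma|$, so every vanishing I need reduces to a finite combinatorial check. The Fano hypothesis enters through reflexivity of the anticanonical polytope, which makes these complexes clean and supplies natural candidate bundles --- for instance the lattice points of the polytope, or the summands of Thomsen's Frobenius decomposition $(\pi_m)_* \cO_X = \bigoplus_j \cL_j$, which generate $\db{\coh X}$ for $m$ large.

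From a candidate list of the correct length $\rk K_0(X)$ I would impose the defining properties: after ordering, exceptionality requires $\Hom(\cL_i, \cL_j) = 0$ for $i > j$, the strong condition requires $\Ext^k(\cL_i, \cL_j) = 0$ for all $k > 0$ and all $i,j$, and fullness requires that the $\cL_i$ generate $\db{\coh X}$. By the previous paragraph the first two become acyclicity statements for the lattice complexes attached to the pairwise differences, and fullness I would extract from a Koszul-type resolution of the diagonal on $X \times X$ built from exterior products of the $\cL_i$ and their duals; a generating strong exceptional collection of length $\rk K_0(X)$ is then automatically full.

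The main obstacle is the simultaneity of these requirements. Any two of generation, exceptionality, and higher-$\Ext$ vanishing are easy to arrange separately; the tension is that the strong condition forces the chosen characters to cluster inside the narrow region of $\Pic(X) \otimes \bbR$ where every pairwise difference has an acyclic lattice complex, while generation forces them to spread out far enough to resolve the diagonal. For $\rho = 3$ these two demands pull against each other, and the crux of the entire argument is a packing estimate showing that a full strong exceptional collection of exactly $\rk K_0(X)$ line bundles can still be fitted inside the admissible region, uniformly across all reflexive fans of Picard number three. I expect essentially all of the difficulty to live in this uniform combinatorial packing step.
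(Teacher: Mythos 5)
There is a fundamental problem: the statement you are trying to prove is false, and the whole point of this paper is to disprove it. The paper constructs an explicit family of toric Fano varieties $Y_{n,k,a}$ with Picard number three (Section \ref{construction}) and shows (Theorem \ref{<c}) that for any $c>\frac34$ and suitable parameters the maximal length of \emph{any} exceptional collection of line bundles on $Y_{n,k,a}$ --- strong or not --- is strictly less than $c\rk K_0(Y_{n,k,a})$; taking $c=1$, the varieties $Y_{16,k,1}$ with $k\geq 386$ admit no full exceptional collection of line bundles at all (Theorem \ref{counterex}). So your induction can never get off the ground at $\rho=3$, which you correctly identify as the first genuinely new case after Borisov--Hua.

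To your credit, you locate the difficulty in exactly the right place: the ``uniform combinatorial packing step.'' But that step is not merely hard --- it provably fails. In the paper's language, exceptionality forces every pairwise difference $L_iL_j^{-1}$ to avoid the forbidden sets $K_I$ of Corollary \ref{non-trivial_cohom}, and Steps 1--5 of the proof of Theorem \ref{<c} show, via bounds on the $z$-amplitude, a Dirichlet-type argument, and Pick's theorem, that the number of lattice points in $\Pic(Y_{n,k,a})\cong\Z^3$ whose pairwise differences all avoid $K_{all}$ is asymptotically at most about $\frac34\rk K_0$, while $\rk K_0(Y_{n,k,a})$ grows like $n^2k$. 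The tension you describe between clustering (for $\Ext$-vanishing) and spreading out (for generation) is precisely the mechanism of the counterexample, and the constant $\frac34$ is sharp: Theorem \ref{at least} shows every toric nef-Fano DM stack with Picard number three does carry a strong exceptional collection of line bundles of length at least $\frac34\rk K_0(Y)$, obtained by packing lattice points into a translate of $\frac12 P$ for a centrally symmetric polytope $P$ avoiding the forbidden sets. So the best achievable outcome of your program is that partial result, never fullness; no choice of candidate list (Thomsen summands, anticanonical lattice points, or otherwise) can evade the counting obstruction.
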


In this paper we disprove Conjecture \ref{Miro-Roig} (and hence Conjecture \ref{Borisov_Hua}) by proving the following theorem.

\begin{theo}\label{<cintro}For any constant $c>\frac34$ there exist infinitely many toric Fano varieties $Y$ with Picard number three,
such that the maximal length of exceptional collection of line bundles on $Y$ is strictly less than $c\rk K_0(Y).$
In particular (for $c=1$), there are infinitely many toric Fano varieties with Picard number three without full exceptional collections of line bundles.\end{theo}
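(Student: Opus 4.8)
The plan is to exhibit an explicit one–parameter family $Y_n$ of smooth toric Fano varieties of Picard number three with $\rk K_0(Y_n)\to\infty$, and to prove that the maximal length of an exceptional collection of line bundles on $Y_n$ is at most $(\tfrac34+o(1))\rk K_0(Y_n)$; choosing $n$ large then beats any prescribed $c>\tfrac34$, and the infinitely many distinct $Y_n$ supply the infinitely many examples. First I would write the family down via Batyrev's classification of smooth complete toric varieties of Picard number three: such a $Y$ is encoded by its primitive collections, equivalently by a weight matrix $Q\in\Z^{3\times(n+3)}$ describing the Cox quotient $Y=(\C^{n+3}\setminus V(B))/(\C^*)^3$. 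I would choose the ``non-split'' type (the one that is \emph{not} an iterated projective bundle, so that the Beilinson--Orlov decomposition is unavailable) and fix the integer parameters so that the anticanonical class is ample; this last point is a finite linear–inequality check on the associated polytope, and it is where being \emph{Fano} (rather than merely nef-Fano) is secured. For such $Y_n$ one reads off $\rk K_0(Y_n)$ as the number of maximal cones of the fan, an explicit increasing function of $n$, together with $\Pic(Y_n)\cong\Z^3$.

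Next I would translate the problem into lattice combinatorics. Writing $L_i=\cO(D_i)$, an exceptional collection demands $\Ext^\bullet(L_i,L_j)=H^*(Y,\cO(D_j-D_i))=0$ for $i>j$; so, setting $Z=\{D\in\Pic(Y):H^*(Y,\cO(D))=0\}$ (the \emph{acyclic} classes), a collection of length $m$ is exactly a subset $P=\{D_1,\dots,D_m\}\subset\Z^3$ admitting a linear order all of whose backward differences lie in $Z$. A clean \emph{necessary} condition, already enough for an upper bound, is that $P$ be a ``$Z$-tournament'': for every pair $D\ne D'$ at least one of $D-D'$, $D'-D$ lies in $Z$, since otherwise the two bundles can be placed in no order. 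The set $Z$ is computed combinatorially from the fan (local cohomology on the irrelevant ideal, or Demazure's description of $H^i(\cO(D))_u$ by reduced cohomology of the relevant subcomplexes); for the chosen family it is a union of finitely many affine ``slabs'' in $\Z^3$, and it is centrally symmetric about $\tfrac12 K_{Y_n}$ by Serre duality.

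The crux is the upper bound on the length of such a collection. Here I would take a genuine exceptional collection, ordered $D_1,\dots,D_m$, and record the consecutive steps $d_i=D_{i+1}-D_i$; exceptionality forces each forward partial sum $D_i-D_j$ (with $i>j$) into $-Z_n$. Since $-Z_n$ is a union of slabs of bounded width in the ``chain'' direction, this confines all the $D_i$ to a bounded union of planes, and reduces the count of $m$ to a packing problem in $\Z^3$. Carrying out this count against $\rk K_0(Y_n)$ is precisely what should produce the constant $\tfrac34$, matching the companion lower bound. I expect this step — pinning down the exact shape of $Z_n$ and proving that no arrangement of classes realizes more than the $\tfrac34$-fraction rather than some larger fraction — to be the main obstacle, since one must control all admissible orderings and exclude cyclic obstructions simultaneously, and keep the estimate sharp enough to touch $\tfrac34$. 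Finally, specializing to $c=1$, the bound $(\tfrac34+o(1))\rk K_0(Y_n)<\rk K_0(Y_n)$ shows that for $n$ large these $Y_n$ carry no full exceptional collection of line bundles, which completes the proof.
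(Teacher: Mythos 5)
Your overall strategy coincides with the paper's: an explicit family of Picard-number-three toric Fanos built from Batyrev's classification, the translation of exceptionality into the condition that backward differences avoid the finitely many ``forbidden'' unions of slabs (the sets $K_I$ for $I$ with $\bar H_\bullet(|C_I|)\neq 0$), and a packing argument inside those slabs. The central symmetry of the non-acyclic locus under $D\mapsto K_Y-D$ is also used in the paper exactly as you suggest. So the skeleton is right.

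However, there is a genuine gap at precisely the point you flag as ``the main obstacle'': you never actually produce the constant $\tfrac34$, and the proposal as written gives no mechanism that would. In the paper the constant arises from a concrete two-dimensional computation: after normalizing, the classes occurring in a fixed $z$-slice are forced into the intersection of \emph{three} slab conditions (on $q$, on $p$, and on $p+q$, each of width about $n$), i.e.\ into a hexagon whose area is $\tfrac34 n^2+O(n)$ rather than the $n^2$ of a parallelogram; Pick's theorem converts this into a lattice-point bound of $(\tfrac34+\epsilon)n^2+O(n)$ per slice. Two further ingredients are needed that your sketch omits: (a) a pigeonhole argument showing that the slices where the $y$-spread exceeds $n(1+\epsilon)$ (so the hexagon bound fails and one must fall back on a cruder $O(n^2)$ bound) are few enough not to spoil the estimate; and (b) the comparison with $\rk K_0=n^2k+2akn+\cdots$, which forces the family to have at least two independent parameters with nested quantifiers --- one fixes $\epsilon<c-\tfrac34$, then takes $n$ large, then $k$ large --- so a one-parameter family with a single $(\tfrac34+o(1))$ claim cannot be set up as stated without specifying a diagonal. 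Without the hexagon computation the argument only confines the collection to a union of $O(n+k)$ planes with $O(n^2)$ points each, which gives a bound of order $\rk K_0$ with constant $1$, not $\tfrac34$; so the decisive step of the theorem is missing rather than merely deferred.
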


More precise statement is Theorem \ref{<c}. We also give infinitely many explicit examples of toric Fano varieties
without full exceptional collections of line bundles (Theorem \ref{counterex}). Note that we do not require the collections to be strong.

On the other hand we prove another (positive) result which constructs not full but relatively long, strong exceptional collection of line bundles on each toric nef-Fano DM stack with Picard number three (see Theorem \ref{at least}).

\begin{theo}\label{at_leastintro}For any toric nef-Fano DM stack $Y$ with Picard number three, there exists a strong exceptional collection of line bundles on $Y$
of length at least $\frac34 \rk K_0(Y).$\end{theo}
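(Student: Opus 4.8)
The plan is to convert the statement into a packing problem on the Picard lattice and then settle it using the classification of Picard-rank-three stacky fans.

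\emph{Reduction to a difference set.} For line bundles on the toric stack $Y$ one has $\Ext^k(\O(D_i),\O(D_j))\cong H^k(Y,\O(D_j-D_i))$, and $H^{>0}(Y,\O_Y)=0$ since $Y$ is toric. Consequently a family $\{\O(D)\}_{D\in S}$ with $S\subseteq\Pic(Y)$ can be ordered into a strong exceptional collection if and only if $H^{>0}(Y,\O(D-D'))=0$ for all $D,D'\in S$. Indeed, this vanishing is exactly strongness, and the required semiorthogonality can then always be arranged: the pseudoeffective cone of a complete toric stack is strongly convex, so fixing a linear functional strictly positive on it and ordering $S$ by increasing value forces $D'-D$ to be non-effective, hence $H^0(Y,\O(D'-D))=0$, whenever $D$ precedes $D'$. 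Writing $A=\{D\in\Pic(Y):H^{>0}(Y,\O(D))=0\}$ and $B=A\cap(-A)$, the condition is precisely $S-S\subseteq B$. Thus it suffices to produce $S\subseteq\Pic(Y)$ with $S-S\subseteq B$ and $|S|\ge\frac34\,\rk K_0(Y)$.

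\emph{Making $B$ explicit for Picard number three.} Here $\rk K_0(Y)$ equals the number of maximal cones of the stacky fan. I would pass to the Gale-dual picture, in which the divisor classes $\beta_i=[D_i]$ are $n+3$ vectors spanning $\Pic(Y)\otimes\R\cong\R^3$, with $-K_Y=\sum_i\beta_i$ lying in the nef chamber by the nef-Fano hypothesis. The combinatorial (irrelevant-ideal / local-cohomology) criterion for vanishing of the higher cohomology of a toric line bundle presents $A$ as the complement of a union of shifted simplicial cones indexed by the primitive collections, of which there are few in Picard number three: by the Batyrev-type classification the $\beta_i$ fall into three or five cyclically arranged blocks. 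Both $A$ and the bounded region $B$ then become explicit lattice polytopes whose shape depends only on the block sizes $p_0,\dots,p_{r-1}$. In the three-block case $Y$ is an iterated projective bundle, $B$ is a product of intervals, and one recovers a full collection (as in the toy case $\PP^1\times\PP^1\times\PP^1$, where $B=\{-1,0,1\}^3$ and $S=\{0,1\}^3$ works). The twisted five-block case is the essential one.

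\emph{The counting, and the main obstacle.} With $B$ explicit, the task reduces to inscribing a maximal difference-admissible set $S$---the analogue of the sub-box $\{0,1\}^3\subseteq\{-1,0,1\}^3$---and comparing $|S|$ with the number of maximal cones. Both quantities are piecewise-polynomial in the block sizes $p_i$, so $|S|/\rk K_0(Y)$ is a ratio of such expressions, and the claim is that a suitable sub-box of $B$ always gives $|S|\ge\frac34\,\rk K_0(Y)$, the infimum $\frac34$ being attained in the limit by the very families that furnish the counterexamples of Theorem \ref{<cintro}. I expect the core of the argument to lie exactly in this optimization: positioning the box in the twisted case so that no pairwise difference falls into a forbidden cone, and then proving the sharp constant $\frac34$ uniformly over all block sizes and over all stacky refinements (finite quotients and torsion in $\Pic$), not merely for smooth varieties. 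A monotonicity or convexity argument in the parameters $p_i$, reducing the infimum to a boundary configuration, is the natural way to conclude.
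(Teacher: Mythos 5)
Your reduction is sound and matches the paper's starting point: both you and the author reduce the problem to finding a subset $S\subseteq\Pic(Y)$ whose pairwise differences avoid the set of line bundles with nonvanishing higher cohomology, with semiorthogonality arranged afterwards by ordering along a functional positive on the effective cone. But from that point on the proposal does not contain a proof. You explicitly defer the entire quantitative content --- ``I expect the core of the argument to lie exactly in this optimization'' --- and that optimization is precisely where all the work is. The paper resolves it by a specific construction: a centrally symmetric polytope $P$ whose projection to $\Pic_{\R}(Y)/(\R\cdot K_Y)$ is the zonotope $\sum_j[0,\widehat{E_j}]$, truncated by $|l(v)|\le l(-K_Y)$. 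Central symmetry and convexity guarantee that differences of points of any translate of $\tfrac12\Int(P)$ lie in $\Int(P)$, which is shown to be disjoint from every forbidden set; a Fubini averaging argument then produces a translate containing at least $\tfrac18\Vol(P)$ lattice points; and the inequality $\Vol(P)\ge 6\rk K_0(Y)$ is proved by comparing the zonotope volume $\sum\widehat{W}_i\wedge\widehat{W}_{i+j}$ with the expression $\sum_i l(W_i)\bigl(\sum_{j_1+j_2>t}\widehat{W}_{i+j_1}\wedge\widehat{W}_{i-j_2}\bigr)$ for $\rk K_0(Y)$, via a combinatorial wedge-product inequality proved by induction on $t$. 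None of these ingredients (the zonotope, the averaging, the inductive inequality) appears in your sketch, and a ``monotonicity or convexity argument in the block sizes'' is not a substitute: the sharp constant comes from an inequality between bilinear expressions in the block-sum vectors $\widehat{W}_i$, not from a boundary analysis in the $p_i$.

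Two further concrete errors. First, you restrict to ``three or five cyclically arranged blocks.'' That is Batyrev's classification for smooth toric \emph{varieties}; for DM stacks with Picard number three the paper's Appendix shows the decomposition is into $2t+1$ blocks for arbitrary $t\ge 1$, and the induction on $t$ in the Sublemma is exactly what handles this. Second, you identify $\rk K_0(Y)$ with the number of maximal cones; for stacks this fails, and one must use the normalized-volume formula \eqref{K_0_general}, which is why the paper's count \eqref{rk_K_0_stack} weights each complement-to-a-maximal-cone by $|\omega(E_{u_1},E_{u_2},E_{u_3})|$. As written, your argument would at best apply to smooth Fano varieties with at most five blocks, and even there the decisive counting step is asserted rather than proved.
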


Recall the result of Kawamata:

\begin{theo}For any smooth projective toric DM stack $Y,$ the derived category $D^b(Y)$ is generated by exceptional collection of coherent sheaves.\end{theo}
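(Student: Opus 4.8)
The plan is to prove the statement by induction on $\dim Y$, using the toric minimal model program to peel off, at each elementary step, a semiorthogonal decomposition of $D^b(Y)$ whose factors are derived categories of toric DM stacks of strictly smaller complexity, together with finitely many extra exceptional coherent sheaves. Concatenating a full exceptional collection of coherent sheaves on each semiorthogonal factor yields such a collection on $D^b(Y)$ — the cross-$\Hom$ vanishing between the factors is exactly the semiorthogonality — so the problem reduces to the terminal objects of the program. These are of two kinds: the zero-dimensional toric DM stacks, i.e.\ finite abelian gerbes $B\mu$, whose derived category is the representation category and carries the tautological full exceptional collection of characters; and weighted projective stacks $\mathbb{P}(a_0,\dots,a_r)$, which possess an explicit Beilinson-type full exceptional collection of line bundles $\mathcal{O},\mathcal{O}(1),\dots,\mathcal{O}(N)$.

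First I would assemble the toric Mori theory. For a smooth complete toric DM stack the cone of curves is rational polyhedral and every extremal contraction is itself toric and readable off the stacky fan, so it is one of exactly three types: a Mori fibration $\pi\colon Y\to S$ with $\dim S<\dim Y$ and general fibre a weighted projective stack; a divisorial contraction $f\colon Y\to Y'$ with $\dim Y'=\dim Y$ and $\rho(Y')=\rho(Y)-1$, contracting a divisor onto a toric substack $Z$ with $\dim Z<\dim Y$; or a small contraction, whose flip $Y\dashrightarrow Y^{+}$ leaves dimension and Picard number unchanged but strictly decreases the combinatorial termination invariant. Since toric varieties are rational, hence uniruled, running the program with respect to $K_Y$ terminates at a Mori fibration after finitely many flips and divisorial contractions; termination here is the standard combinatorial fact for toric MMP.

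Next I would record the three comparison results. For the Mori fibration $\pi$, the relative version of Orlov's projective-bundle theorem (adapted to weighted fibres via the relative Beilinson resolution of the diagonal) yields $D^b(Y)=\langle\, \pi^{*}D^b(S),\, \pi^{*}D^b(S)\otimes\mathcal{O}(1),\,\dots,\,\pi^{*}D^b(S)\otimes\mathcal{O}(N)\,\rangle$, each block being $D^b(S)$ with $\dim S<\dim Y$. For the divisorial contraction $f$, the weighted analogue of Orlov's blow-up formula gives $D^b(Y)=\langle\, \text{finitely many twisted copies of }D^b(Z),\; f^{*}D^b(Y')\,\rangle$, with $Z$ of smaller dimension and $Y'$ of smaller Picard number. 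For the flip, the $K$-negativity across the wall produces a fully faithful embedding $D^b(Y^{+})\hookrightarrow D^b(Y)$ together with a semiorthogonal complement generated by finitely many exceptional sheaves supported on the flipping locus, which is again a lower-dimensional toric stack. In every case I would verify these decompositions by the toric method: realizing the relevant functors through Cox-ring Koszul or \v{C}ech complexes and controlling the $\Ext$-groups through the combinatorial vanishing of line-bundle cohomology on toric DM stacks.

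The hard part will be the flipping step, which is the technical heart of the argument: proving full faithfulness of the comparison functor across a small toric wall-crossing and identifying its orthogonal complement \emph{explicitly} as an exceptional block on the toric flipping locus, uniformly in the weights of the exceptional strata. Once this is in place the induction closes — the outer induction on dimension absorbs the fibration and divisorial-center factors, while a secondary induction along the terminating program absorbs the flip and divisorial-contraction base factors — and the remaining work is bookkeeping: checking that every stratum produced as a centre or flipping locus is itself a smooth toric DM stack to which the hypothesis applies, that the weighted-projective and gerbe base cases give genuinely coherent exceptional collections, and that the spliced collection generates all of $D^b(Y)$.
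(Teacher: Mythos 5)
You should first be aware that the paper contains no proof of this statement: it is quoted verbatim as a known theorem of Kawamata (reference [Ka], \emph{Derived categories of toric varieties}), so there is nothing in the text to compare your argument against line by line. What can be said is that your outline reproduces the strategy of Kawamata's actual published proof --- induction via the toric minimal model program, with semiorthogonal decompositions attached to Mori fibrations, divisorial contractions, and flips, and with weighted projective stacks and gerbes as terminal objects. In that sense you have correctly identified the right route.

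As a self-contained proof, however, the proposal has genuine gaps, and they sit exactly where you flag ``the hard part.'' All three comparison results --- the relative Beilinson decomposition over a Mori fibration with weighted-projective-stack fibres, the weighted blow-up formula for a divisorial contraction, and above all the fully faithful embedding $D^b(Y^{+})\hookrightarrow D^b(Y)$ across a $K$-negative toric flip together with the explicit identification of its orthogonal complement --- are asserted, not proved, and none of them is routine. The flip step is precisely the point at which Kawamata's original 2006 argument was incomplete and required later corrections; declaring it to be ``the technical heart'' does not discharge it. There are also two structural issues you wave at but do not resolve: (a) the target of a toric divisorial contraction or flip is in general only a simplicial ($\Q$-factorial) toric variety with quotient singularities, so to keep the induction inside the class of \emph{smooth} toric DM stacks you must systematically replace these targets by their associated canonical smooth covering stacks and verify that the comparison functors descend to that level; and (b) your double induction (outer on dimension, inner ``along the terminating program'') needs an explicit well-founded invariant, since a flip changes neither the dimension nor the Picard number, and the termination of the stacky toric MMP must be proved for the modified (stacky) discrepancies, not merely invoked from the variety case. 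Until the flip lemma and the descent to smooth stacks are actually established, the argument is a correct roadmap rather than a proof.
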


The following conjecture was suggested to me by D.~Orlov.

\begin{conj}For any smooth projective toric DM stack $Y,$ the derived category $D^b(Y)$ is generated by a strong exceptional collection.\end{conj}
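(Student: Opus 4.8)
The plan is to upgrade Kawamata's full exceptional collection to a strong one, and the cleanest framework for this is the $A_\infty$- (or dg-)enhancement of $D^b(Y)$. Fix a full exceptional collection $(E_1,\dots,E_N)$ of coherent sheaves as provided by Kawamata's theorem, and form the graded algebra $A=\bigoplus_{i,j}\Ext^\bullet(E_i,E_j)$, which carries an $A_\infty$-structure such that $D^b(Y)$ is equivalent to the category of perfect $A_\infty$-modules over $A$. Exceptionality forces $H^0(A)$ to be a directed (lower-triangular) finite-dimensional algebra with semisimple diagonal, while the remaining data lives in the higher groups $\Ext^{>0}(E_i,E_j)$. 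In this language the collection is strong precisely when $A$ is concentrated in degree zero, i.e. $A=H^0(A)$ is an ordinary directed algebra; equivalently $T=\bigoplus_i E_i$ is a tilting object whose endomorphism algebra is directed. So the conjecture reduces to the assertion that one can choose the exceptional basis so that all positive self-extensions of the generating object vanish.

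First I would make the reduction to a combinatorial model explicit, writing $Y=[U/G]$ via the Cox construction (with $U$ the complement of the irrelevant locus in affine space and $G$ a diagonalizable group), so that $D^b(Y)\simeq D^b_G(U)$ and coherent sheaves are described by graded modules over the Cox ring. The objects $E_i$ and the groups $\Ext^\bullet(E_i,E_j)$ then become computable from the fan, via a Koszul/\v{C}ech resolution of the diagonal in the spirit of Beilinson. The point of passing to sheaves of higher rank --- forced by the counterexamples of Theorem \ref{<c}, which rule out line bundles already for Picard number three Fano varieties --- is that one gains far more freedom to kill higher Ext: natural candidates are direct summands of pushforwards along toric resolutions, or analogues of Frobenius-pushforward bundles, whose equivariant (Klyachko-type) descriptions make the vanishing $\Ext^{>0}=0$ checkable combinatorially.

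The heart of the argument is then a vanishing statement. I would use the freedom of mutations, together with geometric vanishing theorems on $Y$ (a stacky Bott/Kodaira-type vanishing adapted to the positivity of the chosen bundles), to arrange $\Ext^{k}(E_i,E_j)=0$ for all $k>0$. Concretely, one shows that each higher Ext class appearing in $A$ can be removed by a mutation that replaces $E_i$ or $E_j$ by an extension or a cone built from neighbouring objects, and that finitely many such mutations terminate in a collection with $A=H^0(A)$. Once $A$ is concentrated in degree zero, $T=\bigoplus_i E_i$ is tilting and $D^b(Y)\simeq D^b(\End(T))$, yielding the desired full strong exceptional collection.

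The main obstacle is precisely the last step: in a general triangulated category a full exceptional collection need not be mutable to a strong one, so the proof must genuinely exploit the geometry of $Y$ rather than formal properties of $D^b(Y)$. The hard part is to produce the required higher-Ext vanishing uniformly, i.e. to prove a sufficiently strong stacky vanishing theorem for the combinatorially chosen generators and to show that the mutation process converges; controlling the global dimension of $\End(T)$, so that $T$ really is tilting and not merely a generator without low-degree self-extensions, is where I expect the real difficulty to lie.
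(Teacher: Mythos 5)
You are attempting to prove a statement that the paper explicitly leaves open: this is Orlov's conjecture, and immediately after stating it the author writes ``It remains open.'' There is therefore no proof in the paper to compare yours against, and your text must be judged on its own terms --- on which terms it is a research programme, not a proof. Every step that carries actual content is asserted rather than established. The reduction to an $A_\infty$-algebra $A=\bigoplus_{i,j}\Ext^{\bullet}(E_i,E_j)$ with the observation that strongness means $A=H^0(A)$ is correct but purely a reformulation; likewise the Cox-construction description of $D^b(Y)$ as equivariant sheaves on $U$ is standard bookkeeping. The conjecture lives entirely in the step you defer: producing a generator with $\Ext^{>0}(E_i,E_j)=0$. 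You invoke an unproved ``stacky Bott/Kodaira-type vanishing'' for unspecified bundles, and a mutation process for which you supply neither a rule selecting the mutations nor any termination argument. As you yourself concede in the final paragraph, a full exceptional collection in a triangulated category need not be mutable into a strong one, so no formal argument of this shape can close the gap; and no vanishing theorem of the required strength is known --- indeed Bott-type vanishing genuinely fails on many toric varieties, which is one reason the problem is hard.

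Two further points deserve flagging. First, mutations of exceptional collections of sheaves typically produce complexes, not sheaves, so ``replacing $E_i$ by an extension or cone built from neighbouring objects'' does not stay within any combinatorially controlled class (Frobenius pushforwards, summands of resolutions of the diagonal, etc.); the Klyachko-type computability you lean on is lost after the first mutation. Second, your appeal to Theorem \ref{<c} only shows that \emph{line bundles} are insufficient on certain Fano varieties of Picard number three; it motivates passing to higher rank but gives no mechanism for the vanishing there. In short: the architecture (enhancement, tilting object, directed endomorphism algebra) is the standard and reasonable frame in which one would attack the conjecture, but the proposal contains no new idea at the point where the conjecture is actually open, so it is not a proof and does not reduce the problem.
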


It remains open.

\smallskip

The paper is organized as follows.

In Section \ref{Gale} we recall some necessary notions and facts about Gale duality.

In Section \ref{DM_stacks} we recall stacky fans and corresponding toric DM stacks.
Here we also describe the conditions on fan corresponding to the (nef-)Fano condition on stack.

Section \ref{cohomology} is devoted to the well-known description of cohomology of line bundles on smooth toric DM stacks (Proposition \ref{Czech}).
Here we also describe the line bundles with zero cohomology and zero higher cohomology (Corollary \ref{non-trivial_cohom}).

Section \ref{var_pic_three} is devoted to the construction of toric Fano varieties with Picard number three in terms of a Gale dual picture.
It is used in the proof of main theorem.

In Section \ref{construction} we construct a certain family of toric Fano varieties $Y_{n,k,a}$ parameterized by integers $n,k\geq 2,$ $a\geq 1.$
We use varieties of this family to prove Theorem \ref{<cintro} (more precisely, Theorem \ref{<c}). The proof is rather technical and
contains a lots of technical bounds. Further, using the proof of Theorem \ref{<c},
we prove that varieties $Y_{16,k,1}$ do not have full exceptional collections of line bundles for $k\geq 386$ (Theorem \ref{counterex}).

In Section \ref{at_least_sect} we prove Theorem \ref{at_leastintro}. The idea is the following. We construct a centrally symmetric polytope $P\subset \Pic_{\R}(Y)$
with the following property: the integral points (treated as line bundles) in the interior of any shift of $\frac12 P$ form a strong exceptional collection.
Further, it follows from Fubini's theorem that for some shift the length of this collection is at least $\frac18\Vol(P).$ Then it remains to prove
the inequality $\frac18\Vol(P)\geq \frac34\rk K_0(Y).$

In Appendix we give a combinatorial description of fans defining smooth toric DM stacks with Picard number three.

{\bf Acknowledgements.} I am grateful to L.~Borisov who pointed out a mistake in the proof of Theorem \ref{<c} in the preliminary version of the paper.
Essentially it does not affect the proof, but the smallest dimension of my counter-examples becomes higher.
I am also grateful to A.~Craw for his remarks and to A.~Kuznetsov and D.~Orlov for useful discussions.

\section{Gale duality}
\label{Gale}

In this section we remind some basic notions and facts related to Gale duality.

Let $V$ be a finite-dimensional vector space over $\R,$ and $v_1,\dots,v_n$ a finite collection of vectors which generate $V.$
Then we have the surjection

\begin{equation}\label{surj_1}p:\R^N\twoheadrightarrow V, e_i\mapsto v_i, i=1,\dots,N,\end{equation}
where $e_1,\dots,e_n$ are standard basis vectors.
Take the dual injection $p^{\vee}:V^{\vee}\hookrightarrow \left(\R^N\right)^{\vee}\cong\R^N,$ and the corresponding quotient map
\begin{equation}\label{surj_2}q:\R^N\to\R^N/V^{\vee}=:U.\end{equation}
Also put $E_i:=q\left(e_i\right)\in U,$ $1\leq i\leq N.$

\begin{defi}In the above notation, the surjection \eqref{surj_2} is called Gale dual to the surjection \eqref{surj_1}.
Further, the collection of vectors $E_1,\dots,E_n\in U$ is called Gale dual to the collection
$v_1,\dots,v_n\in V.$\end{defi}

It is clear that the surjection \eqref{surj_1} is canonically identified with Gale dual to \eqref{surj_2}. Further, it follows from the definition that
we have the following isomorphisms:
\begin{equation}\label{funct_V_rel_E}\{\text{linear functionals on }V\}\cong\{\text{linear relations on }E_1,\dots,E_n\},\end{equation}
\begin{equation}\label{funct_U_rel_v}\{\text{linear functionals on }U\}\cong\{\text{linear relations on }v_1,\dots,v_n\}.\end{equation}
For instance, linear functional $l\in V^{\vee}$ gives a linear relation $l\left(v_1\right)E_1+\dots+l\left(v_n\right)E_n=0.$

We would like to reformulate some statements about vectors $v_i$ in terms of vectors $E_i.$

\begin{prop}\label{contains_0}The following are equivalent:

(i)  the interior of the convex hull of $v_1,\dots,v_n$ contains the origin;

(ii) there exists a functional $l\in U^{\vee}$ such that $l\left(E_i\right)>0$ for $i=1,\dots,n.$\end{prop}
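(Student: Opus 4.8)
The plan is to translate condition (ii) into the existence of a strictly positive linear relation among the $v_i$ via the dictionary \eqref{funct_U_rel_v}, and then to recognize the latter as the classical convexity criterion for $0$ to lie in the interior of $\conv(v_1,\dots,v_n)$. For the first step, I would recall that a functional $l\in U^\vee$ pulls back along $q$ to a functional $l\circ q$ on the ambient space with $(l\circ q)(e_i)=l(E_i)$; since $l\circ q$ factors through $U=\R^N/V^\vee$, it annihilates $V^\vee=\im p^\vee$, and for $\xi\in V^\vee$ one has $(l\circ q)(p^\vee\xi)=\sum_i l(E_i)\,\xi(v_i)=\xi\bigl(\sum_i l(E_i)\,v_i\bigr)$, so this vanishing is exactly the relation $\sum_i l(E_i)\,v_i=0$. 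Thus, under \eqref{funct_U_rel_v}, the functional $l$ corresponds to the linear relation whose $i$-th coefficient is $l(E_i)$, and condition (ii) says precisely that there is a relation $\sum_i a_i v_i=0$ with all $a_i>0$.

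It then remains to show that $0$ lies in the interior of $\conv(v_1,\dots,v_n)$ if and only if such a strictly positive relation exists; here I would use that the $v_i$ generate $V$, so the convex hull is full-dimensional and its interior equals its relative interior. Assuming a positive relation, I would normalize $\sum_i a_i=1$, so that $0=\sum_i a_i v_i$ is a convex combination with strictly positive weights, and then perturb: given any $w\in V$, write $w=\sum_i c_i v_i$ and set $b_i(t)=a_i+t\bigl(c_i-(\sum_j c_j)a_i\bigr)$, which satisfies $\sum_i b_i(t)=1$ and $\sum_i b_i(t)v_i=tw$ for all $t$; for $|t|$ small the coefficients $b_i(t)$ stay positive, so a small multiple of every direction $w$ lies in the hull, forcing $0$ into the interior. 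Conversely, if $0$ is interior then for each $j$ the point $-\delta v_j$ lies in the hull for some small $\delta>0$, which yields a relation $\sum_i c_i^{(j)}v_i=0$ with all $c_i^{(j)}\ge 0$ and $c_j^{(j)}>0$; summing these over $j=1,\dots,n$ gives $\sum_i\bigl(\sum_j c_i^{(j)}\bigr)v_i=0$, in which every coefficient is strictly positive.

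I expect the main obstacle to be the bookkeeping in the two convexity arguments rather than any conceptual difficulty. In the direction ``positive relation $\Rightarrow$ interior'' one must argue cleanly that reaching a small multiple of every direction actually produces an open neighborhood of $0$, which I would make precise using finite-dimensionality and convexity by exhibiting $0$ inside a full-dimensional simplex spanned by such small multiples of a basis and its negatives. In the converse one must take care when some $v_j=0$ (where $-\delta v_j=0$ still lies in the hull) and verify that the summed relation is genuinely strictly positive, which holds since the $j$-th summand already contributes $c_j^{(j)}>0$ to the coefficient of $v_j$. The identification \eqref{funct_U_rel_v} together with the evaluation formula $l(E_i)=a_i$ is the crux that converts the Gale-dual statement (ii) into a transparent statement about the original vectors $v_i$.
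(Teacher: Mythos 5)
Your proof is correct and follows exactly the route of the paper's own (two-line) argument: translate (ii) via \eqref{funct_U_rel_v} into the existence of a strictly positive linear relation $\sum_i a_i v_i=0$, and identify that with $0$ lying in the interior of the convex hull. The paper simply asserts both equivalences; you supply the details (the evaluation $l(E_i)=a_i$, the perturbation argument, and the $v_j=0$ edge case), all of which check out.
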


\begin{proof}First, (i) is equivalent to the existence of linear relation
$$a_1v_1+\dots+a_nv_n=0,\quad a_1,\dots,a_n>0.$$
And this is in turn equivalent to (ii) by \eqref{funct_U_rel_v}.\end{proof}

Consider the vector $K=-E_1-\dots-E_n\in U.$ Further, put $\overline{U}:=U/(\R\cdot K),$ and let $\overline{E_i}\in \overline{U}$
be the projection of $E_i\in U$ for $i=1,\dots,n.$

\begin{prop}\label{is_convex}Suppose that both equivalent statements in Proposition \ref{contains_0} hold. Then the following are equivalent:

(i) points $v_i\in V$ are vertices of a convex polytope;

(ii) for each $i=1,\dots, n$ there exist positive numbers $a_1,\dots,\widehat{a_i},\dots,a_n$ such that
$\sum\limits_{j\ne i}a_jE_j=-K;$

(iii) for each $i=1,\dots, n,$ the interior of the convex hull of $\overline{E_1},\dots,\widehat{\overline{E_i}},\dots,\overline{E_n}$ contains the origin.
\end{prop}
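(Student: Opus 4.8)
The plan is to fix an index $i$ and show that for that index the vertex condition in (i), the relation in (ii), and the interiority in (iii) are mutually equivalent; since (i) asserts that \emph{all} of the $v_i$ are vertices, while (ii) and (iii) are stated ``for each $i$'', the full statement follows by running the argument for each index separately. I would use the standing hypothesis only through the following recorded consequence: if the origin lies in the interior of $\conv(v_1,\dots,v_n)$, then there is no nonzero $l\in V^\vee$ with $l(v_k)\ge 0$ for all $k$. Indeed such an $l$ is strictly positive at some $w\in V$, while interiority places $-\varepsilon w$ in the convex hull for small $\varepsilon>0$, forcing $l(-\varepsilon w)<0$ at a point where $l\ge 0$, a contradiction. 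In particular $K\ne 0$, so that $\overline U$ is a genuine quotient, and $\sum_k\overline{E_k}=0$ in $\overline U$ because $\sum_k E_k=-K$.

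For (i) $\Leftrightarrow$ (ii), I would translate the vertex condition into a statement about functionals and then dualize via \eqref{funct_V_rel_E}. The point $v_i$ is a vertex of $\conv(v_1,\dots,v_n)$ iff $v_i\notin\conv(\{v_j\}_{j\ne i})$, and since the latter set is compact, strict separation makes this equivalent to the existence of $l\in V^\vee$ with $l(v_i)>l(v_j)$ for all $j\ne i$. The recorded consequence forces $l(v_i)=\max_k l(v_k)>0$, so after rescaling I may assume $l(v_i)=1$ and $l(v_j)<1$. Under \eqref{funct_V_rel_E} this functional \emph{is} the relation $E_i+\sum_{j\ne i}l(v_j)E_j=0$, which rearranges to $\sum_{j\ne i}\bigl(1-l(v_j)\bigr)E_j=-K$ with every coefficient $1-l(v_j)>0$; this is exactly (ii). The steps reverse verbatim: a relation as in (ii) produces $l$ with $l(v_i)=1>l(v_j)$, exhibiting $v_i$ as a vertex.

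For (ii) $\Leftrightarrow$ (iii), I would first reduce (iii) to a positivity statement. The vectors $\{\overline{E_j}\}_{j\ne i}$ span $\overline U$, since $\overline{E_i}=-\sum_{j\ne i}\overline{E_j}$, so the first step of the proof of Proposition \ref{contains_0} applies to them: the origin lies in the interior of their convex hull iff there exist $\lambda_j>0$ ($j\ne i$) with $\sum_{j\ne i}\lambda_j\overline{E_j}=0$, equivalently $\sum_{j\ne i}\lambda_j E_j=\mu K$ in $U$ for some $\mu\in\R$. The implication (ii) $\Rightarrow$ (iii) is then immediate with $\mu=-1$, and the reverse implication reduces to recovering coefficients that sum against the $E_j$ to $-K$: this only requires $\mu<0$, after which the rescaling $a_j=\lambda_j/(-\mu)$ finishes.

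The main obstacle is precisely establishing $\mu<0$, and this is where the hypothesis re-enters. Rewriting $\sum_{j\ne i}\lambda_j E_j=\mu K=-\mu\sum_k E_k$ as the relation $\mu E_i+\sum_{j\ne i}(\lambda_j+\mu)E_j=0$ and dualizing via \eqref{funct_V_rel_E} yields $l\in V^\vee$ with $l(v_i)=\mu$ and $l(v_j)=\lambda_j+\mu$ for $j\ne i$. If $\mu\ge 0$ then $l(v_k)\ge 0$ for all $k$ while $l\ne 0$ (as $l(v_j)=\lambda_j+\mu>0$), contradicting the recorded consequence of the hypothesis; hence $\mu<0$. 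I expect the only delicate points to be the sign bookkeeping across these dualizations and the harmless normalization $\sum_{j\ne i}\lambda_j=1$, both of which become routine once the functional-theoretic reformulations above are in place.
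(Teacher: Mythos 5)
Your proof is correct and follows essentially the same route as the paper: (i)$\Leftrightarrow$(ii) via the functional--relation duality with the identical rearrangement $\sum_{j\ne i}(1-l(v_j))E_j=-K$, and (ii)$\Leftrightarrow$(iii) by reducing to the sign of the scalar $\mu$ (the paper's $a$). The only difference is how that sign is pinned down: the paper applies a functional $l\in U^{\vee}$ with $l(E_j)>0$ directly to the relation $\sum_{j\ne i}b_jE_j=aK$, whereas you dualize back to $V^{\vee}$ and invoke the interiority of the origin in $\conv(v_1,\dots,v_n)$ --- an equivalent use of the standing hypothesis.
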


\begin{proof}$(i)\Rightarrow (ii)$ Since this convex polytope contains zero (by assumption), for each $i=1,\dots,N$ there exists a functional $l_i\in V^{\vee},$ such that $l_i\left(v_i\right)=1$ and $l_i\left(v_j\right)<1$ for $j\ne i.$ By \eqref{funct_V_rel_E}, this functional gives a relation on $E_j$ which can be rewritten as follows:
$$\sum\limits_{j\ne i}\left(1-l_i(v_j)\right)E_j=-K.$$ The implication is proved.

$(ii)\Rightarrow (i)$ This implication is proved analogously to the previous one.

$(ii)\Rightarrow (iii)$ We have that $\sum\limits_{j\ne i}a_j\overline{E_j}=0$ with $a_j>0,$ hence the assertion.

$(iii)\Rightarrow (ii)$ For each $i=1,\dots,n$ there exists positive numbers $b_j,$ $j\ne i,$ such that $\sum\limits_{j\ne i}b_j\overline{E_j}=0.$
Then $\sum\limits_{j\ne i}b_jE_j=aK$ for some $a\in\bR.$
Take some functional $l\in U^{\vee}$ such that $l\left(E_j\right)>0$ for $1\leq j\leq n$ (such $l$ exists by our assumption). Then
$$l\left(aK\right)=\sum\limits_{j\ne i}b_jl\left(E_j\right)>0.$$
Since $l\left(K\right)<0,$ we have that $a<0.$ Put $a_j:=-\frac{b_j}a,$ $j\ne i.$ Then we have $\sum\limits_{j\ne i}a_jE_j=-K$ and $a_j>0.$
\end{proof}

\begin{prop}\label{compl_to_faces}Suppose that all statements of Propositions \ref{contains_0} and \ref{is_convex} hold. Let $B\subset \{1,\dots,n\}$ be a non-empty subset, and $\overline{B}\subset \{1,\dots,n\}$
its complement. Then the following are equivalent:

(i) the set $\{v_j,j\in B\}$ is the set of vertices of some face of the polytope;

(ii) there exist positive numbers $a_j,$ $j\in\overline{B}$ such that
$\sum\limits_{j\in\overline{B}}a_jE_j=-K;$

(iii) the relative interior of the convex hull of $\bar{E_j},$ $j\in\overline{B},$ contains the origin.\end{prop}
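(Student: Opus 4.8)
The plan is to prove Proposition \ref{compl_to_faces} by reducing it to the special case already handled in Proposition \ref{is_convex}, which is exactly the situation $\overline{B}=\{i\}$ a singleton (a vertex being a $0$-dimensional face). The governing principle is the standard Gale-duality dictionary: a subset $\{v_j : j\in B\}$ spans a face of the polytope $\conv(v_1,\dots,v_n)$ exactly when the complementary vectors $\{v_j : j\in \overline{B}\}$ satisfy a positive dependence relation of the prescribed type. I will carry this out in the same spirit as Proposition \ref{is_convex}, using the two isomorphisms \eqref{funct_V_rel_E} and \eqref{funct_U_rel_v} to pass back and forth between functionals on $V$ and relations among the $E_i$.

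\medskip

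First, for the implication $(i)\Rightarrow(ii)$ I would argue as follows. If $\{v_j : j\in B\}$ is the vertex set of a face $F$, then $F$ is cut out by a supporting hyperplane: there is a functional $l\in V^\vee$ with $l(v_j)=1$ for $j\in B$ and $l(v_j)<1$ for $j\in\overline{B}$ (here I use that the origin lies in the interior, by the hypothesis carried over from Proposition \ref{contains_0}, so the supporting functional can be normalized to value $1$ on the face). Feeding this $l$ through \eqref{funct_V_rel_E} gives the relation $\sum_{j}l(v_j)E_j=0$, which combined with $\sum_j E_j=-K$ yields
$$\sum_{j\in\overline{B}}\bigl(1-l(v_j)\bigr)E_j=-K,$$
and every coefficient $1-l(v_j)$ is strictly positive for $j\in\overline{B}$. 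For $(ii)\Rightarrow(i)$ I would run this backwards: a positive relation $\sum_{j\in\overline{B}}a_jE_j=-K$ corresponds under \eqref{funct_V_rel_E} to a functional $l$ with $l(v_j)<1$ for $j\in\overline{B}$ and $l(v_j)=1$ for $j\in B$, so the hyperplane $\{l=1\}$ supports the polytope along exactly the $v_j$ with $j\in B$, identifying them as the vertices of a face.

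\medskip

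The equivalence $(ii)\Leftrightarrow(iii)$ I would handle by passing to the quotient $\overline{U}=U/(\R\cdot K)$, copying the mechanism of the corresponding step in Proposition \ref{is_convex}. The direction $(ii)\Rightarrow(iii)$ is immediate: applying the projection to $\sum_{j\in\overline{B}}a_jE_j=-K$ kills $K$ and leaves $\sum_{j\in\overline{B}}a_j\overline{E_j}=0$ with all $a_j>0$, which is precisely the statement that the origin lies in the relative interior of $\conv(\overline{E_j} : j\in\overline{B})$. For $(iii)\Rightarrow(ii)$, a positive relation $\sum_{j\in\overline{B}}b_j\overline{E_j}=0$ lifts to $\sum_{j\in\overline{B}}b_jE_j=aK$ for some scalar $a$, and I would pin down the sign of $a$ by applying a functional $l\in U^\vee$ that is positive on all the $E_j$ (such $l$ exists by Proposition \ref{contains_0}(ii)): since $l(K)<0$ but $\sum b_j l(E_j)>0$, we get $a<0$, and rescaling $a_j:=-b_j/a$ produces the desired positive relation $\sum_{j\in\overline{B}}a_jE_j=-K$.

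\medskip

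I expect the only genuine subtlety — and hence the step to be most careful about — to be the normalization in $(i)\Rightarrow(ii)$: I must ensure the supporting functional can be chosen with the exact value $1$ on the face rather than merely constant, and that strict inequality $l(v_j)<1$ really holds for all $j\in\overline{B}$ (equivalently, that $B$ is the \emph{full} vertex set of the face and not a proper subset). This is where the hypothesis that the origin lies in the interior of the polytope, together with the fact that the $v_i$ are genuinely the vertices (guaranteed by Proposition \ref{is_convex}), does the real work; the relative-interior formulation in $(iii)$ is exactly what encodes ``face of the correct dimension,'' so I would take care that the positivity of \emph{all} coefficients in $(ii)$ matches precisely with relative-interior membership in $(iii)$. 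Everything else is a routine transcription of the arguments already given for Proposition \ref{is_convex}.
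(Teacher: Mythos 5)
Your proposal is correct and follows exactly the route the paper intends: the paper's own proof of Proposition \ref{compl_to_faces} is just the remark that it goes ``absolutely analogously to Proposition \ref{is_convex},'' and your argument is precisely that analogue, with the supporting functional normalized to $1$ on the face for $(i)\Leftrightarrow(ii)$ and the quotient by $\R\cdot K$ together with the positive functional from Proposition \ref{contains_0} for $(ii)\Leftrightarrow(iii)$. You also correctly flag the one point needing care (strictness of $l(v_j)<1$ for $j\in\overline{B}$, which holds because the $v_j$ are genuine vertices), so nothing is missing.
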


\begin{proof}The proof goes absolutely analogously to Proposition \ref{is_convex}.\end{proof}

Recall that a convex polytope is called simplicial if all its facets (and hence all faces) are simplices. We have the following corollary.

\begin{cor}Suppose that all statements of Propositions \ref{contains_0} and \ref{is_convex} hold. Then the following are equivalent:

(i) the convex hull of $v_1,\dots,v_n$ is a simplicial polytope;

(ii) for any subset $A\subset \{1,\dots,N\}$ with $|A|<\dim U=N-\dim V,$ the convex hull of $\overline{E_j},$ $j\in A,$
does not contain the origin.\end{cor}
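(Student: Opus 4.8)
The plan is to derive this corollary directly from Proposition \ref{compl_to_faces}, which already characterizes faces of the polytope in terms of the Gale dual data. The polytope $\conv(v_1,\dots,v_n)$ is simplicial precisely when every proper face is a simplex, i.e. when the number of vertices of every face is at most $\dim V$ (since a simplex spanning a face of dimension $d$ has exactly $d+1 \leq \dim V$ vertices, and a face cannot have fewer vertices than $d+1$). Equivalently, the polytope is \emph{non}-simplicial iff some proper face has a set of vertices $\{v_j : j\in B\}$ with $|B| \geq \dim V + 1$. I would therefore translate the simpliciality condition into a statement about subsets $B$ being or not being vertex sets of faces, and then apply the face criterion from Proposition \ref{compl_to_faces}.

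First I would set up the counting. Write $N = \dim U + \dim V$ (the total number of vectors, equal to $n$). A subset $B$ of cardinality $|B|$ with complement $\overline{B}$ of cardinality $|\overline{B}| = N - |B|$ is the vertex set of a face iff the relative interior of $\conv(\overline{E_j}, j\in\overline{B})$ contains the origin, by part (iii) of Proposition \ref{compl_to_faces}. Now I would connect this to condition (ii) of the corollary, which speaks of subsets $A$ with $|A| < \dim U$. The key observation is that condition (ii) should be read as: no small collection of the $\overline{E_j}$ (fewer than $\dim U$ of them) can contain the origin in its convex hull. I would match $A = \overline{B}$, so that $|A| < \dim U = N - \dim V$ translates to $|B| > \dim V$, which is exactly the regime where faces would fail to be simplices.

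The core of the argument runs as follows. Suppose the polytope is not simplicial: then some proper face has vertex set $\{v_j : j\in B\}$ with $|B| \geq \dim V + 1$. By Proposition \ref{compl_to_faces}(iii) the relative interior of $\conv(\overline{E_j}, j\in\overline{B})$ contains the origin, and $|\overline{B}| = N - |B| \leq N - \dim V - 1 < \dim U$, contradicting (ii) with $A = \overline{B}$. Conversely, suppose (ii) fails: some $A$ with $|A| < \dim U$ has $0$ in its convex hull; by shrinking $A$ I may assume $0$ lies in the \emph{relative} interior of $\conv(\overline{E_j}, j\in A)$, and then Proposition \ref{compl_to_faces}(iii)$\Rightarrow$(i) shows that the complement $B = \{1,\dots,n\}\setminus A$ is the vertex set of a face with $|B| = N - |A| > N - \dim U = \dim V$, so that face has more than $\dim V$ vertices and is not a simplex, hence the polytope is not simplicial.

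The main obstacle I anticipate is the passage from ``$0$ lies in the convex hull of the $\overline{E_j}$'' to ``$0$ lies in the \emph{relative} interior,'' since Proposition \ref{compl_to_faces}(iii) requires the relative interior. I would handle this by the standard fact that if $0 \in \conv(\overline{E_j}, j\in A)$ then $0$ lies in the relative interior of the convex hull of some subset $A' \subseteq A$ (take a minimal such subset, over which the $\overline{E_j}$ give a positive affine dependence with all coefficients strictly positive). Since $|A'| \leq |A| < \dim U$, the argument goes through with $A'$ in place of $A$, and its complement still has more than $\dim V$ elements. A secondary point worth stating carefully is the elementary equivalence between ``all faces are simplices'' and ``every face has at most $\dim V$ vertices,'' which rests on the fact that a face of dimension $d$ always has at least $d+1$ vertices and is a simplex exactly when it has exactly $d+1$; this is routine and I would only note it in passing.
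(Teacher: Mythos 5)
Your argument is correct and follows exactly the route the paper intends: the corollary is stated without proof immediately after Proposition \ref{compl_to_faces}, and your derivation (matching $A=\overline{B}$, translating ``some face has more than $\dim V$ vertices'' into ``some $A$ with $|A|<\dim U$ has $0$ in the convex hull of the $\overline{E_j}$,'' and handling the convex-hull versus relative-interior discrepancy by passing to the support of a convex combination) is precisely the implicit argument. No issues.
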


Now note that we have natural isomorphism $\det(V)\cong \det(\R^n/V^{\vee})=\det(U)$ of one-dimensional spaces. Fix some
volume forms $\omega$ on $V$ and $\omega'$ on $U$ which correspond to each other.

\begin{lemma}\label{volumes}Choose a permutation $\sigma\in S_n.$ Then we have
$$|\omega(v_{\sigma(1)},\dots,v_{\sigma(\dim V)})|=|\omega'(E_{\sigma(\dim V+1)},\dots,E_{\sigma(n)})|.$$\end{lemma}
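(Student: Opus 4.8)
The plan is to turn everything into a statement about complementary minors of two matrices whose row spaces are orthogonal complements. Write $d=\dim V$ and fix bases of $V$ and of $U$; let $\omega,\omega'$ be the corresponding coordinate volume forms. Then $p$ is given by a $d\times n$ matrix $A$ whose $i$-th column is $v_i$, and $q$ by an $(n-d)\times n$ matrix $C$ whose $i$-th column is $E_i$; both have full rank since $p,q$ are surjective. With these identifications $p^\vee$ is $A^{\mathrm T}$, and since $q$ is the quotient of $\R^n$ by $\im p^\vee$ we get $q\circ p^\vee=0$, i.e. $CA^{\mathrm T}=0$. By the rank conditions this says precisely that $\operatorname{rowspace}(C)=\operatorname{rowspace}(A)^{\perp}$ inside $\R^n$. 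Writing $I=\{\sigma(1),\dots,\sigma(d)\}$ and $\bar I=\{\sigma(d+1),\dots,\sigma(n)\}$, and letting $A_I$ (resp. $C_{\bar I}$) be the square submatrix on the indicated columns, the two sides of the claimed identity are exactly $|\det A_I|$ and $|\det C_{\bar I}|$, since absolute values are insensitive to the ordering of the columns.

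So the heart of the matter is the following classical fact about a pair of full-rank matrices with complementary orthogonal row spaces: there is a constant $\kappa$, independent of $I$, with $\det A_I=\pm\,\kappa\,\det C_{\bar I}$ for every $d$-subset $I$. I would prove this via the Hodge star on $\Lambda^{\bullet}\R^n$. Let $a_1,\dots,a_d$ be the rows of $A$ and $c_1,\dots,c_{n-d}$ the rows of $C$. Expanding $a_1\wedge\cdots\wedge a_d=\sum_{|I|=d}(\det A_I)\,e_I$ and applying $\star$, which sends $e_I$ to $\pm e_{\bar I}$, gives $\star(a_1\wedge\cdots\wedge a_d)=\sum_{\bar I}(\pm\det A_I)\,e_{\bar I}$. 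On the other hand $\star$ carries the decomposable vector representing $\operatorname{rowspace}(A)$ to one representing its orthogonal complement, so $\star(a_1\wedge\cdots\wedge a_d)=\mu\,(c_1\wedge\cdots\wedge c_{n-d})$ for a single scalar $\mu$; comparing $e_{\bar I}$-coefficients gives $\det A_I=\pm\,\mu\,\det C_{\bar I}$, so $\kappa=\mu$.

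It remains to compute $|\kappa|$ and to see that the correspondence of $\omega$ and $\omega'$ forces it to be $1$. Since $\star$ is an isometry of $\Lambda^{\bullet}\R^n$, we have $|\mu|=\|a_1\wedge\cdots\wedge a_d\|/\|c_1\wedge\cdots\wedge c_{n-d}\|=\sqrt{\det(AA^{\mathrm T})/\det(CC^{\mathrm T})}$ by the Gram-determinant formula for the norm of a decomposable multivector. Hence $|\det A_I|=\sqrt{\det(AA^{\mathrm T})/\det(CC^{\mathrm T})}\;|\det C_{\bar I}|$, and the lemma is equivalent to $\det(AA^{\mathrm T})=\det(CC^{\mathrm T})$. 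This last equality is exactly what it means for $\omega$ and $\omega'$ to correspond: tracing the isomorphism $\det V\cong\det(\R^n/V^{\vee})=\det U$ through the standard basis of $\R^n$, one lifts the chosen basis of $U$ to $\R^n$ through the rows of $C$, wedges it with the rows of $A$, and evaluates against $e_1\wedge\cdots\wedge e_n$; this shows $\omega$ is carried to $\lambda\,\omega'$ with $|\lambda|=\sqrt{\det(AA^{\mathrm T})/\det(CC^{\mathrm T})}$, so corresponding forms give $|\lambda|=1$, as required.

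The main obstacle is the middle step: establishing the complementary-minors identity with genuine control over the constant. The Plücker/Hodge-star argument is what makes the sign bookkeeping uniform in $I$ and reduces $\kappa$ to the two Gram determinants, after which the only remaining point is to recognize that the normalization built into ``corresponding volume forms'' is precisely the one equating these Gram determinants. I would be careful to route every sign ambiguity — the $\pm$ in $\star(e_I)=\pm e_{\bar I}$, the orderings hidden in $\sigma$, and the sign of $\lambda$ — into the absolute values appearing in the statement, so that no orientation conventions need ever be pinned down.
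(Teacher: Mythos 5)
Your proof is correct, but it takes a genuinely different route from the paper's. The paper's argument is a two-line direct computation: it picks a basis $f_1,\dots,f_{\dim V}$ of $V^{\vee}$ normalized so that $\omega^{\vee}(f_1,\dots,f_{\dim V})=1$, observes that $\det\left(f_i(v_j)\right)_{1\leq i,j\leq \dim V}$ equals up to sign the $n\times n$ determinant with columns $p^{\vee}(f_1),\dots,p^{\vee}(f_{\dim V}),e_{\dim V+1},\dots,e_n$ (cofactor expansion along the standard basis columns), and then applies the definition of corresponding volume forms using the specific lifts $e_{\dim V+1},\dots,e_n$ of $E_{\dim V+1},\dots,E_n$; the Gale-dual matrix $C$ never enters. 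You instead prove the general complementary-minors identity for two full-rank matrices with orthogonally complementary row spaces ($CA^{\mathrm T}=0$) via the Hodge star, and then fix the global constant through the Gram determinants $\det(AA^{\mathrm T})$ and $\det(CC^{\mathrm T})$. Both are valid: the paper's version is shorter and entirely elementary, while yours isolates a reusable classical fact (Pl\"ucker-coordinate duality between a subspace and its orthogonal complement) at the cost of extra normalization bookkeeping. The one place you are loose is the last step: the rows of $C$ are not literally lifts of the chosen basis of $U$ (the lifts landing in the row space of $C$ are the columns of $C^{\mathrm T}(CC^{\mathrm T})^{-1}$), but the correction factor $\det(CC^{\mathrm T})$ this introduces is precisely what yields your stated $|\lambda|=\sqrt{\det(AA^{\mathrm T})/\det(CC^{\mathrm T})}$, so the conclusion stands.
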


\begin{proof}We may and will assume that $\sigma=\id.$ Take the dual vollume form $\omega^{\vee}$ on $V^{\vee}$ and choose functionals
$f_1,\dots,f_{\dim V}\in V^{\vee}$ such that $\omega^{\vee}(f_1,\dots,f_{\dim V})=1.$ Then we have the following chain of equalities:
\begin{multline*}|\omega(v_1,\dots,v_{\dim V})|=|\omega(v_1,\dots,v_{\dim V})|\cdot |\omega^{\vee}(f_1,\dots,f_{\dim V})|=
|\det(f_i(v_j))_{1\leq i,j\leq\dim V}|\\
=|\det(p^{\vee}(f_1),\dots,p^{\vee}(f_{\dim V}),e_{\dim V+1},\dots,e_n)|\\
=|\omega^{\vee}(f_1,\dots,f_{\dim V})|\cdot
|\omega'(E_{\dim V+1},\dots,E_n)|=|\omega'(E_{\dim V+1},\dots,E_n)|.\end{multline*}
Lemma is proved.\end{proof}

Note that Gale duality can be also considered for integer lattices so that after tensoring with $\R$ we obtain the above picture.
More precisely, we do not assume that $v_1,\dots,v_n\in C$ generate the lattice $C$ but we still assume that they generate the real space $C_{\R}.$
The Gale dual collection $E_1,\dots,E_n\in D$ generates the abelian group $D.$ It may have torsion:
\begin{equation}\label{D_tors}D_{tors}\cong \Hom(C/(\Z\cdot v_1+\dots+\Z\cdot v_n),\C^*).\end{equation}

\begin{lemma}\label{vol_D_tors}Assume that $V=C_{\R},$ where $C$ is some integral lattice, and we have $v_i\in C.$ Choose vollume
form $\omega$ in such a way that the volume of unit parallelepiped of $C$ equals to $1.$ Let $D$ be a Gale dual lattice.
Then the volume of unit parallelepiped of $D/D_{tors}$ (with respect to $\omega'$ on $D_{\R}=U$) equals to
$$|D_{tors}|.$$\end{lemma}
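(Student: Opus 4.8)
The plan is to make the integral Gale dual lattice completely explicit and then compute its covolume by the same determinant bookkeeping that underlies Lemma \ref{volumes}. Write $d:=\dim V$ and let $\mu$ be the standard volume form on $\R^N$, for which $\Z^N$ has covolume $1$. The correct integral model (the one for which \eqref{D_tors} holds) is $D=\coker(p^{\vee}\colon C^{\vee}\to\Z^N)$, where $p^{\vee}(f)=(f(v_1),\dots,f(v_N))$; since the $v_i$ span $V$, the map $p$ is surjective after tensoring with $\R$, so $p^{\vee}$ is injective with image of rank $d$. Identifying $V^{\vee}$ with $\im(p^{\vee})\subset\R^N$, the saturation of $p^{\vee}(C^{\vee})$ in $\Z^N$ is $L:=V^{\vee}\cap\Z^N$. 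Hence $D_{tors}=L/p^{\vee}(C^{\vee})$ and $D/D_{tors}=\Z^N/L$, the latter being exactly the lattice $q(\Z^N)\subset U$ generated by the $E_i$, so that $D_{\R}=U$ as required.

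First I would put $p^{\vee}$ into Smith normal form: choose a $\Z$-basis $g_1,\dots,g_d$ of $C^{\vee}$ and a $\Z$-basis $w_1,\dots,w_N$ of $\Z^N$ with $p^{\vee}(g_i)=\alpha_i w_i$ for positive integers $\alpha_i$. Then $V^{\vee}=\Span(w_1,\dots,w_d)$, so $L=\Z w_1+\dots+\Z w_d$, while the images $\bar w_{d+1},\dots,\bar w_N$ of $w_{d+1},\dots,w_N$ under $q$ form a $\Z$-basis of $D/D_{tors}$. In particular $D_{tors}=\bigoplus_{i=1}^d\Z/\alpha_i$, so $|D_{tors}|=\prod_{i=1}^d\alpha_i$ (consistent with \eqref{D_tors}), and the quantity to be computed is $\Vol(D/D_{tors})=|\omega'(\bar w_{d+1},\dots,\bar w_N)|$.

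The key step is the volume-form compatibility underlying Lemma \ref{volumes}: relative to the short exact sequence $0\to V^{\vee}\xrightarrow{p^{\vee}}\R^N\xrightarrow{q}U\to 0$, and under the isomorphism $\det V\cong\det U$ for which $\omega'$ corresponds to $\omega$, one has $\mu=\omega^{\vee}\wedge\omega'$, where $\omega^{\vee}$ is the volume form on $V^{\vee}$ dual to $\omega$. Evaluating on the adapted basis $w_1,\dots,w_N$, whose first $d$ members span $V^{\vee}$ and satisfy $(p^{\vee})^{-1}(w_i)=\alpha_i^{-1}g_i$, gives
$$1=|\mu(w_1,\dots,w_N)|=\Bigl(\prod_{i=1}^d\alpha_i^{-1}\Bigr)\,|\omega^{\vee}(g_1,\dots,g_d)|\cdot|\omega'(\bar w_{d+1},\dots,\bar w_N)|.$$
Since $\omega$ gives $C$ covolume $1$, the dual form $\omega^{\vee}$ gives the dual lattice $C^{\vee}$ covolume $1$ (because $\Vol(C,\omega)\cdot\Vol(C^{\vee},\omega^{\vee})=1$), so $|\omega^{\vee}(g_1,\dots,g_d)|=1$. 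Therefore $\Vol(D/D_{tors})=\prod_{i=1}^d\alpha_i=|D_{tors}|$, as claimed.

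I expect the main obstacle to be conventional rather than conceptual. The delicate points are (a) fixing the integral model $D=\coker(p^{\vee})$ together with the identifications $D/D_{tors}=\Z^N/L$ and $D_{tors}=L/p^{\vee}(C^{\vee})$, and (b) normalizing the three forms $\omega,\omega',\mu$ so that $\mu=\omega^{\vee}\wedge\omega'$ in exactly the way Lemma \ref{volumes} uses, together with the dual-lattice covolume identity. Once these conventions are pinned down, Smith normal form collapses the whole computation to the single determinant identity above, with no genuine analytic content.
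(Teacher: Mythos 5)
Your proof is correct, and it rests on the same determinant identity as the paper's: the compatibility $\mu=\omega^{\vee}\wedge\omega'$ coming from the exact sequence $0\to V^{\vee}\to\R^N\to U\to 0$, evaluated on an integral basis of $\Z^N$ adapted to $V^{\vee}$. The only real difference is in how torsion is handled. The paper first reduces to the case $D_{tors}=0$ by replacing $C$ with the sublattice $\Z v_1+\dots+\Z v_N$ and rescaling $\omega$ by $|D_{tors}|$ (justified by \eqref{D_tors}), after which the determinant is $1$ because a basis of $\Hom(C,\Z)$ pushed into $\Z^N$ together with lifts of a basis of $D$ generate $\Z^N$. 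You instead keep $C$ fixed and diagonalize $p^{\vee}$ by Smith normal form, so the factor $\prod_i\alpha_i=|D_{tors}|$ appears explicitly in the computation rather than being absorbed into a rescaled volume form. Your route has the advantage of making the identifications $D_{tors}\cong\bigoplus_i\Z/\alpha_i$ and $D/D_{tors}=\Z^N/L$ completely explicit (and in effect reproves \eqref{D_tors} along the way), at the cost of invoking Smith normal form and the dual-covolume identity $\Vol(C,\omega)\cdot\Vol(C^{\vee},\omega^{\vee})=1$; the paper's reduction is shorter but leans on \eqref{D_tors} as a black box. Both are complete.
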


\begin{proof}We may and will assume that $D_{tors}=0$ (by replacing $C$ with $\Z\cdot v_1+\dots+\Z\cdot v_n$ and $\omega$
with $|D_{tors}|\cdot\omega,$ according to
\eqref{D_tors}). Choose any basis $f_1,\dots,f_{\dim V}$ of $\Hom(C,\Z),$ and choose any $u_1,\dots,u_{\dim U}\in \Z^N$
such that $q(u_1),\dots,q(u_{\dim U})$ form a basis of $D.$ Then
$$|\omega'(q(u_1),\dots,q(u_{\dim U}))|=|\det(u_1,\dots,u_{\dim U},p^{\vee}(f_1),\dots,p^{\vee}(f_{\dim V}))|=1$$
since $p^{\vee}(f_1),\dots,p^{\vee}(f_{\dim V}),u_1,\dots,u_{\dim U}$
generate $\Z^N.$\end{proof}

We have the following

\begin{cor}\label{is_basis}Let $v_1,\dots,v_n\in C$ be a collection of vectors in integer lattice which generate it. Let $E_1,\dots,E_n\in D$
be a Gale dual collection. Further, let $A\subset \{1,\dots,N\}$ be a subset with $|A|=\rank\left(C\right),$ and $\overline{A}\subset \{1,\dots,N\}$
its complement. Then the following are equivalent:

(i) the vectors $v_j,$ $j\in A,$ generate the lattice $C;$

(ii) the vectors $E_j,$ $j\in \overline{A},$ generate the lattice $D.$\end{cor}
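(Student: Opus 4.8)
The plan is to reduce each of the two conditions to a single numerical criterion---that the absolute covolume of the relevant $\rank$-many vectors equals $1$---and then to identify the two covolumes using the volume comparison of Lemma \ref{volumes}. The point is that generating a lattice is equivalent to having covolume $1$, so once the volume forms on $C_{\R}$ and $D_{\R}$ are matched and simultaneously normalized, the threshold $1$ transfers from one side to the other.

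First I would record the structural consequence of the hypothesis that $v_1,\dots,v_n$ generate $C$: by \eqref{D_tors} this forces $D_{tors}=0$, so $D$ is a genuine lattice, of rank $n-\rank(C)=|\overline{A}|$. I then fix corresponding volume forms $\omega$ on $V=C_{\R}$ and $\omega'$ on $U=D_{\R}$, normalizing $\omega$ so that the unit parallelepiped of $C$ has volume $1$. By Lemma \ref{vol_D_tors}, the volume of the unit parallelepiped of $D$ with respect to $\omega'$ is then $|D_{tors}|=1$, so $\omega'$ is likewise normalized for $D$. This normalization step is the conceptual crux: it is precisely what makes ``covolume $=1$'' mean ``generates the lattice'' on both sides at once.

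Next I would establish the elementary generation criterion in the form I need: for a lattice $L$ of rank $r$ carrying a volume form $\omega_L$ of unit covolume, any $r$ vectors $w_1,\dots,w_r\in L$ generate $L$ if and only if $|\omega_L(w_1,\dots,w_r)|=1$. Indeed, if the $w_i$ are linearly dependent this value is $0$ and they cannot span; otherwise they span a full-rank sublattice whose index in $L$ equals $|\omega_L(w_1,\dots,w_r)|$, which is $1$ exactly when that sublattice is all of $L$. Applying this to $C$ with the family indexed by $A$, condition (i) becomes $|\omega(v_j\colon j\in A)|=1$; applying it to $D$ with the family indexed by $\overline{A}$, condition (ii) becomes $|\omega'(E_j\colon j\in\overline{A})|=1$.

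Finally I would choose a permutation $\sigma\in S_n$ carrying $\{1,\dots,\rank(C)\}$ onto $A$ and $\{\rank(C)+1,\dots,n\}$ onto $\overline{A}$, and invoke Lemma \ref{volumes} to obtain
$$|\omega(v_j\colon j\in A)| = |\omega'(E_j\colon j\in\overline{A})|.$$
Hence the two quantities equal $1$ simultaneously, and (i) $\Leftrightarrow$ (ii) follows. The only bookkeeping that needs attention is the degenerate (non-spanning) case, but it is handled uniformly: a vanishing covolume on either side signals failure to span $C_{\R}$ (resp.\ $D_{\R}$), hence failure to generate on both sides at once. I do not anticipate a genuine obstacle here; the entire argument rests on Lemmas \ref{volumes} and \ref{vol_D_tors}, and the main thing to get right is the matching normalization of $\omega$ and $\omega'$.
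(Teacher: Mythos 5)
Your proposal is correct and follows essentially the same route as the paper, which simply states that the corollary ``follows immediately from Lemmas \ref{volumes} and \ref{vol_D_tors}''; you have supplied exactly the intended details (the vanishing of $D_{tors}$ via \eqref{D_tors}, the matched normalization of $\omega$ and $\omega'$, and the covolume-$=1$ criterion for generation, transferred across Gale duality by Lemma \ref{volumes}).
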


\begin{proof}This follows immediately from Lemmas \ref{volumes} and \ref{vol_D_tors}.
\end{proof}

\section{Smooth toric DM stacks}
\label{DM_stacks}

Let $N$ be a free finitely generated abelian group, and let $\Sigma$ be a complete simplicial fan in $N.$
We call $\Sigma$ a stacky fan if on any one-dimensional cone $\sigma\in\Sigma(1)$ there chosen a non-zero vector $v_{\sigma}\in\sigma\cap N.$

The associated toric DM stack $Y=Y_{\Sigma}$ is constructed as follows. We have natural
surjection
$$\Z^{\Sigma(1)}\to N,\quad e_{\sigma}\mapsto v_{\sigma}.$$
Put
$$\Gale(N):=\coker(N^{\vee}\to \Z^{\Sigma(1)}).$$
Define the algebraic Group $G$ by the formula
$$G:=\Hom(\Gale(N),\C^*).$$
Define the open subset $U\subset \C^{\Sigma(1)}$ as follows. The point $z\in\C^{\Sigma(1)}$ lies in $U$ if the set $\{\sigma\in\Sigma(1)\mid z_{\sigma}=0\}$
is not a set of one-dimensional cones of some cone in $\Sigma.$

We have a natural action of $G$ on $U$ via inclusion $G\subset (\C^*)^{\Sigma(1)}.$ Put
$$Y_{\Sigma}:=U/G.$$\

The stack $Y_{\Sigma}$ is smooth and complete. The torus
$$T=(\C^*)^{\Sigma(1)}/G$$
naturally acts on $Y_{\Sigma}.$ The orbits of codimension $i$ are in bijection with cones $\sigma\in\Sigma(i):$
$$\sigma\leftrightarrow\{z\in U\mid z_l=0\text{ for }l\subset\partial\sigma,\quad z_l\ne 0\text{ for }l\not\subset\partial\sigma\}/G.$$
If in addition for each maximal cone the vectors $v_{\sigma}$ on its boundary form a basis of $N,$
then $Y_{\Sigma}$ is just a toric variety. The following is well-known (see \cite{FLTZ}, Theorem 4.4):

\begin{prop}\label{Fano-cond}The stack $Y_{\Sigma}$ is nef-Fano (resp. Fano) iff the polytope
$$\bigcup\limits_{\langle v_{i_1},\dots,v_{i_{\rk N}}\rangle\in\Sigma(\rk N)}
\conv(v_{i_1},\dots,
v_{i_{\dim Y}},0)$$
is convex (resp. in addition all $v_{\sigma}$ are its vertices and it is simplicial).
\end{prop}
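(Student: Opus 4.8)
The plan is to recast the whole statement in terms of the anticanonical support function. On a smooth toric DM stack the anticanonical class is $-K_Y=\sum_{\sigma\in\Sigma(1)}D_\sigma$, so I would introduce the piecewise-linear function $\psi\colon N_\R\to\R$ that is linear on each maximal cone and normalized by $\psi(v_\sigma)=1$ for every ray $\sigma\in\Sigma(1)$. This is well defined precisely because $\Sigma$ is complete and simplicial: the generators $v_{\sigma_1},\dots,v_{\sigma_d}$ of any maximal cone form a basis of $N_\R$ (here $d=\rk N=\dim Y$). I would then invoke the standard toric criterion, which for DM stacks is governed by a support function on $N_\R$ exactly as for varieties (as in \cite{FLTZ}): $-K_Y$ is nef iff $\psi$ is convex, and ample iff $\psi$ is strictly convex.

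The next step is to identify the polytope in the statement with a sublevel set of $\psi$. Writing a point of a maximal cone $\sigma$ as $x=\sum_i t_i v_{\sigma_i}$ with $t_i\ge 0$, one has $\psi(x)=\sum_i t_i$, so $x$ lies in $\conv(0,v_{\sigma_1},\dots,v_{\sigma_d})$ exactly when $\sum_i t_i\le 1$, that is, when $\psi(x)\le 1$. Taking the union over all maximal cones (which cover $N_\R$) shows that the polytope $Q:=\bigcup_\sigma\conv(v_{\sigma_1},\dots,v_{\sigma_d},0)$ is the unit sublevel set $\{\psi\le 1\}$.

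For the nef-Fano assertion I would use that $\psi$ is positively homogeneous of degree one and strictly positive away from the origin (on a maximal cone it is the nonnegative sum $\sum_i t_i$, vanishing only at $0$). Such a function is the gauge functional of its sublevel set $Q$, and it is convex iff $Q$ is convex; combined with the two previous paragraphs this yields ``$Y_\Sigma$ nef-Fano $\iff Q$ convex''. For the Fano case I would analyze strict convexity wall by wall: across a wall $\tau=\sigma\cap\sigma'$ the function $\psi$ fails to be strictly convex exactly when the outer faces $\conv(v_{\sigma_1},\dots,v_{\sigma_d})$ and $\conv(v_{\sigma'_1},\dots,v_{\sigma'_d})$ are coplanar. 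Hence strict convexity everywhere is equivalent to each simplex $\conv(v_{\sigma_1},\dots,v_{\sigma_d})$ being a genuine facet of $Q$, which is precisely the condition that $Q$ is convex, all $v_\sigma$ are its vertices, and $Q$ is simplicial.

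I expect two main obstacles. First, justifying that the nef/ample $\iff$ convex/strictly-convex criterion transfers verbatim from varieties to DM stacks; I would settle this by appealing to the description of nef and ample line bundles on toric stacks in terms of support functions on the stacky fan. Second, making the wall-by-wall translation of strict convexity into the combined ``all $v_\sigma$ vertices and $Q$ simplicial'' statement fully rigorous. The delicate point is that a failure of strict convexity across some wall must be shown to force either a non-simplex facet or one of the $v_\sigma$ into the relative interior of a face: here the $d+1$ ray generators of two adjacent maximal cones (sharing the $d-1$ generators of the common wall) then all lie in a single $(d-1)$-plane, so a short dimension count shows the corresponding facet of $Q$ cannot be a simplex with all these points as vertices.
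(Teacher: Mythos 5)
The paper does not actually prove this proposition: it is stated as well-known and attributed to \cite{FLTZ}, Theorem 4.4, so there is no internal argument to compare yours against. Your proposal is the standard proof that underlies that citation, and it is essentially correct. The identification of the polytope $Q=\bigcup_{\sigma}\conv(v_{\sigma_1},\dots,v_{\sigma_d},0)$ with the sublevel set $\{\psi\le 1\}$ of the piecewise-linear function normalized by $\psi(v_\sigma)=1$ is clean (well-definedness of $\psi$ uses exactly that $\Sigma$ is complete and simplicial, so the generators of each maximal cone form a basis of $N_\R$ and the linear pieces agree on common faces), and the equivalence ``$\psi$ convex $\iff$ $Q$ convex'' for a positively $1$-homogeneous function that is positive off the origin is the gauge-function argument you describe. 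Your wall-by-wall treatment of strict convexity is also the right mechanism: since the domains of linearity of a convex PL function on a complete fan are convex, failure of strict convexity is detected across a single wall, where the $d+1$ ray generators of the two adjacent maximal cones become coplanar; then either all $d+1$ are vertices of the common facet (so $Q$ is not simplicial) or one of them fails to be a vertex of $Q$, which gives exactly the dichotomy matching the parenthetical condition in the statement. The one place where you are, like the paper, ultimately deferring to the literature is the first ``obstacle'' you flag: that nef/ample on a toric DM stack is governed by convexity/strict convexity of the support function on the stacky fan exactly as for varieties. That is precisely the content of the FLTZ result the paper cites, so your proof is a legitimate expansion of the citation rather than a circular one, but it would be worth stating that input as an explicit black box rather than as something to be ``settled.''
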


We will also need the following formula for the rank of $K_0(Y_{\Sigma})$ \cite{BHo}:

\begin{equation}\label{K_0_general}\rk K_0(Y_{\Sigma})=(\rk N)!\Vol(\bigcup\limits_{\langle v_{i_1},\dots,v_{i_{\rk N}}\rangle\in\Sigma(\rk N)}
\conv(v_{i_1},\dots,
v_{i_{\dim Y}},0)).\end{equation}

In particular, if $Y_{\Sigma}$ is a variety, then $\rk K_0(Y_{\Sigma})$ equals to the number of maximal cones in $\Sigma$ (or, equivalently,
torus-invariant points in $Y_{\Sigma}$).

\section{Cohomology of line bundles on smooth toric DM stacks}
\label{cohomology}

Let $\Sigma$ be a complete simplicial stacky fan, and $Y=Y_{\Sigma}$ the corresponding stack.
We have that
$$\Pic(Y)=\Pic_{G}(U)=\Hom(G,\C^*)\cong\Gale(N).$$
Denote by $\{\cO(E_{\sigma})\in\Pic(Y)\}_{\sigma\in\Sigma(1)}$ the Gale dual collection to $\{v_{\sigma}\in N\}_{\sigma\in\Sigma(1)}.$
Then $\cO(E_{\sigma})$ is a line bundle of invariant divisor corresponding to $\sigma.$
In the next sections we will not distinguish divisors and the corresponding line bundles.

Further, for any $I\subset\Sigma(1)$ denote by $C_I$ the simplicial complex with the vertex set $I,$ which consists
of subsets $J\subset I$ which are precisely boundary cones of some cone in $\Sigma.$ For instance, $|C_{\emptyset}|=\emptyset$
and $|C_{\Sigma(1)}|$ is homeomorphic to $S^{\rk N-1}.$

Also, for any $r\in\Z^{\Sigma(1)},$ put
$$\supp(r)=\{\sigma\in\Sigma(1)\mid r_{\sigma}<0\}.$$

The following is well-known (computation by \v{C}ech).

\begin{prop}\label{Czech}Let $L$ be a line bundle on $Y.$ Then
$$H^i(L)=\bigoplus_{\substack{r\in\Z^{\Sigma(1)},\\
\cO(\sum\limits_{\sigma\in\Sigma(1)}r_{\sigma}E_{\sigma})\cong L}}\bar{H}_{i-1}(|C_{\supp(r)}|).$$\end{prop}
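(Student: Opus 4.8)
The plan is to compute $H^i(L)$ through a Čech complex on an affine cover of $Y$ indexed by the maximal cones, then decompose every term according to the $\Gale(N)$-grading (equivalently, the characters of $G$) and match each graded summand to the reduced homology of $|C_{\supp(r)}|$. First I would pass to the quotient presentation $Y=U/G$. Write the homogeneous coordinate ring $S=\C[x_\sigma:\sigma\in\Sigma(1)]$, graded by $\Pic(Y)\cong\Gale(N)$ via $\deg(x_\sigma)=E_\sigma$, so that a monomial $x^r=\prod_\sigma x_\sigma^{r_\sigma}$ with $r\in\Z^{\Sigma(1)}$ has degree $\cO(\sum_\sigma r_\sigma E_\sigma)$. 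For each maximal cone $\tau\in\Sigma(\rk N)$ set $U_\tau=\{z:z_\sigma\ne 0\text{ for every }\sigma\in\Sigma(1)\text{ not a ray of }\tau\}\subset\C^{\Sigma(1)}$; these are $G$-invariant affine opens whose union is exactly $U$, so their images $Y_\tau:=U_\tau/G$ form an affine open cover of $Y$. Since $U\to Y$ is a $G$-bundle, the cohomology of $L$ is computed by the $G$-equivariant, hence $\Gale(N)$-graded, Čech complex of this cover; on the component of degree $D=[L]$ this identifies $H^i(L)$ with the $i$-th cohomology of
\[
\check{C}^\bullet:\quad \prod_{\tau}\Gamma(Y_\tau,L)\to\prod_{\tau_0<\tau_1}\Gamma(Y_{\tau_0}\cap Y_{\tau_1},L)\to\cdots .
\]

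Next I would make the grading explicit. The sections of $L$ over $Y_{\tau_0}\cap\cdots\cap Y_{\tau_k}$ are the degree-$D$ Laurent monomials $x^r$ that are regular there, and $x^r$ is regular precisely when $r_\sigma\ge 0$ for every ray $\sigma$ common to all of $\tau_0,\dots,\tau_k$; equivalently, the common face $\tau_0\cap\cdots\cap\tau_k$ has no ray lying in $\supp(r)$. Fixing a single weight $r$ with $\cO(\sum_\sigma r_\sigma E_\sigma)\cong L$ therefore splits off a subcomplex $\check{C}^\bullet_r$ spanned by those tuples $(\tau_0,\dots,\tau_k)$ whose intersection meets $\supp(r)$ in no ray, equipped with the usual simplicial Čech differential. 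Summing over all admissible $r$ then reduces the statement to the purely combinatorial claim that $H^i(\check{C}^\bullet_r)\cong\bar{H}_{i-1}(|C_{\supp(r)}|)$.

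Finally I would identify $\check{C}^\bullet_r$ topologically. The full nerve of $\{Y_\tau\}$ is the boundary complex of the fan, whose realization is $|C_{\Sigma(1)}|\cong S^{\rk N-1}$, and $\check{C}^\bullet_r$ retains exactly the faces avoiding $\supp(r)$; thus it computes the reduced cohomology of the portion of this sphere complementary to the closed star of $|C_{\supp(r)}|$. The isomorphism $H^i(\check{C}^\bullet_r)\cong\bar{H}_{i-1}(|C_{\supp(r)}|)$ then comes from combinatorial Alexander duality on $S^{\rk N-1}$, the single index shift being the standard one relating Čech/cohomological degree to reduced homological degree of the support complex. I expect this last step to be the main obstacle: pinning down the duality and, in particular, getting the degree shift (the ``$i-1$'' together with the passage from cohomology to \emph{reduced} homology) exactly right, including the degenerate endpoints $\supp(r)=\emptyset$ and $\supp(r)=\Sigma(1)$. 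These two cases are a useful consistency check, since with the convention $\bar{H}_{-1}(\emptyset)=\C$ they yield a contribution to $H^0$ from each globally regular monomial and a contribution to $H^{\dim Y}$ from the totally negative monomials, as Serre duality demands. The reductions in the earlier steps are formal once the $G$-equivariant descent and the regularity criterion for monomials are in place.
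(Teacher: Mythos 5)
The paper itself gives no proof of this proposition (it is dismissed as ``well-known (computation by \v{C}ech)''), and your overall strategy --- the $\Pic(Y)$-graded \v{C}ech complex on the invariant affine cover $\{Y_\tau\}_{\tau\in\Sigma(\rk N)}$, split into weight-$r$ pieces --- is exactly the standard route the paper has in mind. Your first two steps (descent to the charts $U_\tau/G$ and the regularity criterion ``$r_\sigma\ge 0$ for every common ray of $\tau_0,\dots,\tau_k$'') are correct.

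The final step, however, contains a genuine error. The full nerve of the cover $\{Y_\tau\}$ is \emph{not} the boundary complex of the fan: every intersection $Y_{\tau_0}\cap\dots\cap Y_{\tau_k}$ contains the dense torus, hence is non-empty, so the nerve is the full simplex $\Delta$ on the set of maximal cones --- contractible, not a sphere --- and there is no ambient $S^{\rk N-1}$ on which to run Alexander duality (which would in any case produce a shift of the form $\rk N-1-i$ rather than $i-1$). Moreover the retained tuples do not form a simplicial complex: the condition ``$\tau_0\cap\dots\cap\tau_k$ contains no ray of $\supp(r)$'' is stable under enlarging a tuple, not under shrinking it. What is true is that the \emph{excluded} tuples form a subcomplex $K\subset\Delta$, namely $K=\bigcup_{\sigma\in\supp(r)}\Delta_\sigma$ where $\Delta_\sigma$ is the full simplex on the maximal cones containing $\sigma$; your complex $\check{C}^\bullet_r$ is the relative cochain complex $C^\bullet(\Delta,K)$, so $H^i(\check{C}^\bullet_r)\cong \bar{H}^{i-1}(K)$ by the long exact sequence of the pair (this is where the shift by one and the reduced theory enter, uniformly covering the case $\supp(r)=\emptyset$, where $K=\emptyset$). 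Finally, $K$ is covered by the contractible pieces $\Delta_\sigma$, whose multiple intersections are full simplices, non-empty exactly when the corresponding rays span a cone of $\Sigma$; the nerve lemma therefore gives $K\simeq |C_{\supp(r)}|$, and with field coefficients $\bar{H}^{i-1}(K)\cong\bar{H}_{i-1}(|C_{\supp(r)}|)$. So the statement is reachable from where you stand, but via the nerve lemma applied to the complementary subcomplex, not via Alexander duality.
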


For any $I\subset \Sigma(1)$ such that $\bar{H}_{\cdot}(|C_I|)\ne 0,$ we put
\begin{equation}K_I:=\{\cO(\sum\limits_{\sigma\in I}(-r_{\sigma}-1)E_{\sigma}+\sum\limits_{\sigma\notin I}r_{\sigma}E_{\sigma})\mid r_{\sigma}\in\Z_{\geq 0},\sigma\in\Sigma(1)\}\subset\Pic Y.\end{equation}
We call such $K_I$ forbidden sets. For instance, $K_{\emptyset}$ is the set of all effective line bundles.

\begin{cor}\label{non-trivial_cohom}Let $L$ be a line bundle on $Y.$ The following are equivalent:

(i)$H^{\cdot}(L)=0$ (resp. $H^{>0}(L)=0$);

(ii)$L$ does not belong to any forbidden $K_I$ (resp. to any forbidden $K_I,$ $I\ne\emptyset$).\end{cor}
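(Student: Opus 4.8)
The plan is to derive Corollary \ref{non-trivial_cohom} directly from the \v{C}ech computation in Proposition \ref{Czech}, reading off the vanishing of cohomology from the combinatorial description of the forbidden sets $K_I$. The key observation is that Proposition \ref{Czech} expresses $H^i(L)$ as a direct sum, indexed by lifts $r\in\Z^{\Sigma(1)}$ of the class of $L$ to $\Z^{\Sigma(1)}$, of the reduced homology groups $\bar{H}_{i-1}(|C_{\supp(r)}|)$. Thus $H^{\cdot}(L)=0$ if and only if \emph{every} such lift $r$ satisfies $\bar{H}_{\cdot}(|C_{\supp(r)}|)=0$, i.e. the complex $C_{\supp(r)}$ is acyclic (or empty). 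The entire content of the corollary is to translate this acyclicity condition into the statement that $L\notin K_I$ for all $I$ with $\bar{H}_{\cdot}(|C_I|)\ne 0$.

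First I would unwind the definition of $K_I$. Fix $I\subset\Sigma(1)$ with $\bar{H}_{\cdot}(|C_I|)\ne 0$. Saying that $L\in K_I$ means precisely that $L$ admits a lift $r\in\Z^{\Sigma(1)}$ with $r_\sigma<0$ for $\sigma\in I$ and $r_\sigma\geq 0$ for $\sigma\notin I$; indeed, writing $r_\sigma=-r'_\sigma-1$ for $\sigma\in I$ and $r_\sigma=r'_\sigma$ for $\sigma\notin I$ with all $r'_\sigma\in\Z_{\geq 0}$ is exactly the parametrization used in the definition of $K_I$, and under this substitution the condition $r_\sigma<0$ on $I$ and $r_\sigma\geq 0$ off $I$ becomes $r'_\sigma\geq 0$ everywhere. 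For such a lift one has $\supp(r)=I$ by construction. This gives the crucial equivalence: $L\in K_I$ if and only if $L$ has a lift $r$ with $\supp(r)=I$.

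With this dictionary in hand, the corollary becomes essentially tautological. The nonvanishing set $\{I : \bar H_\cdot(|C_I|)\ne 0\}$ is exactly the indexing set over which the forbidden sets are defined. So $L$ belongs to some forbidden $K_I$ if and only if $L$ has a lift $r$ whose support $I=\supp(r)$ satisfies $\bar H_\cdot(|C_I|)\ne 0$, which is exactly the condition that some summand $\bar{H}_{i-1}(|C_{\supp(r)}|)$ in Proposition \ref{Czech} is nonzero for some $i$, i.e. $H^{\cdot}(L)\ne 0$. Negating both sides yields the equivalence (i)$\Leftrightarrow$(ii). For the parenthetical refinement, I would note that the summands with $i>0$ correspond precisely to those lifts with $\supp(r)\ne\emptyset$: when $\supp(r)=\emptyset$ the complex $C_\emptyset$ is empty, and $\bar H_{-1}(\emptyset)\cong\mk$ contributes only to $H^0$. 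Hence restricting to forbidden $K_I$ with $I\ne\emptyset$ exactly captures the higher cohomology $H^{>0}(L)$, giving the ``resp.'' version.

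The only genuine point requiring care, rather than mere bookkeeping, is the interaction with the homology indexing and the empty complex. One must be confident that the convention $\bar H_{-1}(|C_\emptyset|)=\bar H_{-1}(\emptyset)\cong\mk$ is being used (so that effective line bundles, which are exactly those with a lift $r\geq 0$, account for $K_\emptyset$ and contribute to $H^0$), and that $\bar H_i$ vanishes for $i<-1$ and for $i$ too large. The main obstacle is therefore not analytic but definitional: keeping the degree shifts in Proposition \ref{Czech} aligned so that ``$H^{\cdot}(L)=0$'' and ``$H^{>0}(L)=0$'' partition the lifts according to whether $\supp(r)$ is empty or not. Once the reduced-homology conventions are pinned down, no further computation is needed and the corollary follows formally.
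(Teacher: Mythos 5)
Your proof is correct and is exactly the argument the paper intends: the paper's proof of this corollary is the single line ``This follows immediately from Proposition \ref{Czech},'' and your unwinding of the definition of $K_I$ (a line bundle lies in $K_I$ iff it admits a lift $r\in\Z^{\Sigma(1)}$ with $\supp(r)=I$) together with the degree bookkeeping for $\bar H_{-1}(\emptyset)$ is the intended justification. Nothing is missing.
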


\begin{proof}This follows immediately from Proposition \ref{Czech}.\end{proof}

We will need the notion of a primitive collection.

\begin{defi}A non-empty subset $I\subset \Sigma(1)$ is called a primitive collection if
it is not a set of boundary cones of any cone in $\Sigma,$ but each proper subset $J\subset I$ is.\end{defi}

Note that primitive collections describe the combinatorial structure of a fan (i.e. the corresponding simplicial complex).

\begin{lemma}\label{union}Let $I\subset\Sigma(1)$ be a non-empty subset such that $\bar{H}^{cdot}(|C_I|)\ne 0.$
Then $I$ is a union of primitive collections.\end{lemma}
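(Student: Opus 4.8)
The plan is to prove the contrapositive in a structured way: I want to show that if $I \subset \Sigma(1)$ is a non-empty subset that is \emph{not} a union of primitive collections, then $\bar H_{\cdot}(|C_I|) = 0$, i.e.\ the simplicial complex $C_I$ is acyclic (has the reduced homology of a point, or is a simplex-like contractible object). First I would unwind the definitions. The complex $C_I$ consists of those subsets $J \subseteq I$ that form the boundary cones of some cone in $\Sigma$; since $\Sigma$ is a fan, this is a genuine abstract simplicial complex (closed under passing to subsets). The hypothesis that $I$ is \emph{not} a union of primitive collections is the key combinatorial input, and I would try to extract from it a single vertex or face of $I$ that behaves like a cone point.

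The main idea I would pursue is a cone/contractibility argument. Recall that a primitive collection is a minimal non-face of $\Sigma$ contained in $I$. Suppose $I$ is not a union of primitive collections. I would argue that then there must exist some vertex $\sigma_0 \in I$ that is contained in no primitive collection lying inside $I$; equivalently, every minimal non-face of $C_I$ avoids $\sigma_0$. I would then claim that $\sigma_0$ is a cone vertex of the complex $C_I$, meaning: for every face $J \in C_I$, the set $J \cup \{\sigma_0\}$ is also a face of $C_I$. If I can establish this, then $C_I$ is a cone with apex $\sigma_0$, hence contractible, hence $\bar H_{\cdot}(|C_I|) = 0$, contradicting the hypothesis. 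So the heart of the proof is the implication: \emph{no primitive collection inside $I$ contains $\sigma_0$} $\Rightarrow$ \emph{$\sigma_0$ is a cone vertex}.

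To verify the cone-vertex claim, I would take an arbitrary face $J \in C_I$ and show $J \cup \{\sigma_0\} \in C_I$. Suppose not. Then $J \cup \{\sigma_0\}$ is a non-face, so it contains some minimal non-face, i.e.\ some primitive collection $P \subseteq J \cup \{\sigma_0\}$. Since $J$ itself is a face, $P$ cannot be contained in $J$, so $P$ must contain $\sigma_0$. But $P \subseteq J \cup \{\sigma_0\} \subseteq I$, so $P$ is a primitive collection inside $I$ containing $\sigma_0$ — contradicting the choice of $\sigma_0$. This closes the loop cleanly, provided the existence of such a $\sigma_0$ is justified.

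The step I expect to be the main obstacle is precisely the \emph{existence} of a vertex $\sigma_0 \in I$ avoided by all primitive collections inside $I$. The hypothesis is that $I$ is not equal to the union of the primitive collections it contains; this says the union is a proper subset of $I$, so any $\sigma_0 \in I$ lying outside that union works, and such $\sigma_0$ exists by properness. I would state this carefully: let $U_I = \bigcup\{P : P \subseteq I \text{ primitive}\}$; the hypothesis ``$I$ is not a union of primitive collections'' should be read as $U_I \neq I$ (one must check that if $I$ \emph{were} a union of some primitive collections, it would in particular equal $U_I$, since any primitive collection used must lie in $I$ and hence be counted in $U_I$). Then pick $\sigma_0 \in I \setminus U_I$. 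The only subtlety is confirming that the ``union of primitive collections'' in the lemma means a union of primitive collections \emph{contained in $I$}, which is forced because a primitive collection appearing in the union must be a subset of $I$; I would make this reading explicit at the start of the argument, and with that the existence step and the whole proof go through.
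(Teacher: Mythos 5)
Your proof is correct, and it takes a genuinely different route from the paper. The paper argues homologically: it passes to the $T$-equivariant Picard group, identifies $\bar H_{i-1}(|C_I|)$ with an equivariant cohomology group of $\cO(-\sum_{j\in I}E_j)$, and then uses acyclicity of a twisted Koszul complex to replace $I$ by a smaller subset still containing a given element $i$ and still having nonvanishing cohomology, iterating down to a primitive collection through $i$. Your argument is purely combinatorial: you observe that primitive collections are exactly the minimal non-faces of the complex, so if some $\sigma_0\in I$ lies in no primitive collection contained in $I$, then for every face $J$ of $C_I$ the set $J\cup\{\sigma_0\}$ is again a face (any minimal non-face inside it would have to contain $\sigma_0$ and lie in $I$), whence $C_I$ is a cone with apex $\sigma_0$ and $|C_I|$ is contractible. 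Your reduction of ``union of primitive collections'' to $U_I=I$ is also handled correctly, and the one fact you implicitly use --- that every non-face contains a minimal non-face, which requires singletons to be faces --- holds because each ray is itself a cone of $\Sigma$. What your approach buys is elementariness and robustness: it avoids the equivariant machinery and the somewhat delicate iteration in the paper's proof, and it isolates the real content as a standard statement about simplicial complexes and their minimal non-faces. What the paper's approach buys is that the same Koszul-complex technique sits naturally alongside the \v{C}ech computation of Proposition \ref{Czech} and generalizes to statements about which line bundles have nonvanishing cohomology, but for this particular lemma your argument is the cleaner one.
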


\begin{proof}Consider the equivariant Picard group $\Pic_T(Y)\cong \Z^{\Sigma(1)},$ with basis given by $\cO(E_i)$ with obvious equivariant structures.
Then a computation by \v{C}ech shows that
$$H^i_T(\cO(\sum\limits_{\sigma\in\Sigma(1)}r_{\sigma}E_{\sigma}))\cong \bar{H}_{i-1}(|c_{\supp(r)}|).$$

Now consider our subset $I.$ Take some element $i\in I.$ We need to prove that there exists a primitive collection $J\subset I$ such that $i\in J.$
If $I$ is a primitive collection itself, then there is nothing to prove. Otherwise,
there exists some proper subset $J\subset I$ which is a primitive collection.

If $i\in J,$ then we are done. Otherwise,
the (twisted by $\cO(-E_i)$) Koszul complex
$$\cO(-E_i)\otimes \bigotimes_{j\in I\setminus\{i\}}(\cO(-E_j)\to\cO)$$
of $T$-equivariant vector bundles is acyclic. Since
$$H^{>0}(\cO(-\sum\limits_{j\in I}E_j))\cong \bar{H}_{\cdot}(|C_I|)\ne 0,$$
we have that for some proper $J\subset I$ containing $i,$
$$H^{>0}(\cO(-\sum\limits_{j\in I}E_j))\cong \bar{H}_{\cdot}(|C_I|)\ne 0.$$
We may replace $I$ by $I'.$ Iterating, we will come to some primitive $I^{\prime\prime}\subset I$ containing $i.$ This proves Lemma.\end{proof}

\section{Toric Fano varieties with Picard number three}
\label{var_pic_three}

Let $\Sigma$ be a stacky fan in some integer lattice $C,$ such that the corresponding stack
$Y:=Y_{\Sigma}$ is a Fano variety with Picard number three. By the description of Batyrev \cite{Ba}, there exists
a decomposition $\Sigma(1)=X_0\sqcup\dots\sqcup X_{2t}$ with $t\in \{1,2\}$ and $X_i\ne\emptyset,$ such
that primitive collections in $\Sigma(1)$ are precisely $X_i\cup\dots\cup X_{i+t-1},$ $0\leq i\leq 2t,$ where
we put $X_{i+2t+1}:=X_i.$ In the case $t=1,$ the King' conjecture was proved (more generally, it was proved for all projective toric varieties
with disjoint primitive collections) \cite{CM}, Theorem 1.3.

We will deal with the case $t=2.$ Note that complements to maximal cones are precisely sets of the form $\{p,q,r\},$ where for some $i$
$p\in X_i,$ $q\in X_{i+1},$ $r\in X_{i+3}.$ Hence, by \eqref{K_0_general}, we have the following formula for the rank of $K_0(Y).$
\begin{equation}\label{rk_K_0}\rk K_0(Y)=\sum\limits_{i=0}^4 |X_i|\cdot|X_{i+1}|\cdot|X_{i+3}|.\end{equation}

Let $E_i\in \Pic Y,$ $i\in \Sigma(1),$ be invariant divisors corresponding to one-dimensional cones. Note that they
determine the vectors $v_i$ via Gale duality. Further, the vectors $v_i$ determine the fan by the Fano condition (Proposition \ref{Fano-cond}).

\begin{prop}\label{descr_by_divisors}Let $E_1,\dots,E_N\in\Z^3$ be a collection of vectors which generate the lattice. Suppose that the following conditions hold:

1) There exists a functional $l\in \left(\R^3\right)^{\vee}$ such that $l\left(E_j\right)>0$ for $j=1,\dots,N;$

2) There exists a decomposition $\{1,\dots,N\}=X_0\sqcup\dots\sqcup X_4$ (the numeration is cyclic),
$X_i\ne\emptyset,$ and functionals $l_0,\dots,l_4\in \left(\R^3\right)^{\vee}$ such that
$$l_i\left(E_j\right)\begin{cases} >0 & \text{for }j\in X_i\cup X_{i+1};\\
<0 & \text{otherwise},\end{cases}$$
and $l_i\left(E_1+\dots+E_N\right)=0.$

3) For each $i=0,\dots,4,$ all triples $p\in X_i,$ $q\in X_{i+1},$ $r\in X_{i+3},$
the vectors $E_p, E_q, E_r$ form a basis of $\Z^3.$

Then the Gale dual collection of vectors defines
a toric Fano variety with five primitive collections $X_i\cup X_{i+1}.$\end{prop}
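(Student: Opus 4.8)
The strategy is to verify, via the Gale-duality dictionary of Section \ref{Gale}, that the three hypotheses on the $E_j$ translate exactly into the geometric conditions on the dual vectors $v_j$ that Proposition \ref{Fano-cond} requires for the associated stack to be a Fano \emph{variety}, and that the combinatorics of the fan matches Batyrev's description with the five prescribed primitive collections $X_i\cup X_{i+1}$. Recall that here $V=(\R^3)^\vee$ plays the role of the space on which the functionals $l, l_0,\dots,l_4$ live, while the $E_j$ sit in $U=\R^3$; by \eqref{funct_U_rel_v} each functional on $U$ corresponds to a linear relation among the $v_j$, and conversely. The key device throughout is that the forbidden combinatorial data of the fan is encoded by which subsets of the $E_j$ have convex hulls (after quotienting by $K$) containing the origin in their relative interior, via Propositions \ref{contains_0}, \ref{is_convex}, and \ref{compl_to_faces}.

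\textbf{Step 1: the polytope is honest (Fano condition).} First I would note that since the $E_j$ generate $\Z^3$, the Gale dual $v_j$ generate the rank-$3$ lattice $C=N$, so we obtain a stacky fan on a free rank-$3$ group. Condition (1) gives a functional $l\in U^\vee$ with $l(E_j)>0$ for all $j$, which by Proposition \ref{contains_0} is precisely the statement that the origin lies in the interior of $\conv(v_1,\dots,v_N)$. Condition (3), that every admissible triple $E_p,E_q,E_r$ is a $\Z^3$-basis, is by Corollary \ref{is_basis} equivalent to saying that the complementary $(N-3)$-tuples of the $v_j$ form lattice bases; since the complements of maximal cones will turn out to be exactly these triples $\{p,q,r\}$ with $p\in X_i$, $q\in X_{i+1}$, $r\in X_{i+3}$, this says the vectors on the boundary of each maximal cone form a basis of $N$, which is exactly the extra condition making $Y_\Sigma$ a variety rather than merely a stack. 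It remains to check that the $v_j$ are vertices of a simplicial polytope and that the maximal cones are as claimed; this is the content of the next step.

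\textbf{Step 2: identifying the primitive collections.} The heart of the argument is to show that the non-faces (equivalently, the primitive collections) of the fan cut out by the Fano condition are exactly $X_i\cup X_{i+1}$ for $0\le i\le 4$. Here I would use condition (2): each functional $l_i$ gives, via \eqref{funct_U_rel_v}, a relation $\sum_j l_i(E_j)v_j=0$, and the sign pattern (positive on $X_i\cup X_{i+1}$, negative elsewhere) together with $l_i(E_1+\dots+E_N)=0$ should be manipulated into the form required by Proposition \ref{compl_to_faces}(ii). Concretely, passing to $\overline U=U/(\R K)$, the condition $l_i(\sum E_j)=0$ means $l_i$ descends to $\overline U$, and I would show that for a subset $B$, the set $\{v_j: j\in B\}$ is a face precisely when its complement's $\overline{E_j}$ have $0$ in their relative interior hull — then verify that the maximal faces (facets) are those whose complement is a triple $\{p,q,r\}$ of the stated form, and that $X_i\cup X_{i+1}$ fails to be a face while all its proper subsets are faces. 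This is where the five functionals $l_0,\dots,l_4$ are used one at a time to certify, for each $i$, that the origin lies in the relative interior of the convex hull of the $\overline{E_j}$ with $j$ in the complement of a candidate facet, and to certify that $X_i\cup X_{i+1}$ is a minimal non-face.

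\textbf{Main obstacle.} The step I expect to be delicate is Step 2: translating the algebraic sign conditions on the $l_i$ into the precise claim that $\{X_i\cup X_{i+1}\}$ is the \emph{complete} list of primitive collections — not just that these are non-faces, but that there are no others and that each is minimal. One must check carefully that a subset $S$ is a face if and only if $S$ contains no $X_i\cup X_{i+1}$, and this requires combining the five relations with the positivity from condition (1) in the manner of the $(iii)\Rightarrow(ii)$ argument in Proposition \ref{is_convex}, where signs of the scalar $a$ in $\sum b_j E_j = aK$ are pinned down by evaluating the strictly positive functional $l$. Once the primitive collections are shown to be exactly the $X_i\cup X_{i+1}$, simpliciality and the Fano (as opposed to nef-Fano) conclusion follow from the Corollary after Proposition \ref{compl_to_faces} together with condition (3), completing the identification of $Y_\Sigma$ as a toric Fano variety with the asserted five primitive collections.
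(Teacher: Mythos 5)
Your plan follows the paper's proof essentially verbatim: conditions (1)--(3) are fed into Propositions \ref{contains_0}, \ref{is_convex}, \ref{compl_to_faces} and Corollary \ref{is_basis} to show the $v_j$ are vertices of a simplicial polytope whose facet complements are exactly the triples $\{p,q,r\}$ with $p\in X_i$, $q\in X_{i+1}$, $r\in X_{i+3}$, whence the primitive collections are the $X_i\cup X_{i+1}$. Two small slips to fix: the Gale dual vectors $v_j$ live in a lattice of rank $N-3$, not rank $3$ (the resulting variety is $(N-3)$-dimensional with Picard number three, consistent with your own later remark that facets are $(N-3)$-tuples), and simpliciality comes from condition (2) --- the sign patterns of the $l_i$ forbid the origin from lying in the convex hull of any pair $\overline{E_k},\overline{E_l}$ --- not from condition (3), which is used only to make each facet's vertices a lattice basis so that $Y_\Sigma$ is a variety.
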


\begin{proof}Let $v_1,\dots,v_n\in\Z^{N-3}$ be the Gale dual collection.

By condition 1) and Proposition
\ref{contains_0}, the interior of the convex hull of $v_i$ contains the origin.

Put $K:=-E_1-\dots-E_N,$  $W:=\R^3/(\R\cdot K),$ and let $\overline{E_i}\in W$ be the projections of $E_i.$ It follows from
 condition 2) that the following are equivalent:

(i) The interior of the convex hull of $\overline{E_p},\overline{E_q},\overline{E_r}$ contains zero;

(ii) for some $0\leq i\leq 4$ and permutation of $p,q,r,$ we have  $p\in X_i,$ $q\in X_{i+1},$ $r\in X_{i+3}.$

Hence, by Proposition \ref{is_convex}, the points $v_i$ are vertices of a convex polytope. Again by condition 2), for any $1\leq k<l\leq N$ the convex hull of $\overline{E_k},\overline{E_l}$ does not contain the origin. Hence, our convex polytope is simplicial. Further, by equivalence $(i)\Leftrightarrow (ii)$
and Proposition \ref{compl_to_faces}, the complements to (sets of vertices of) facets are of the form $\{p,q,r\},$ $p\in X_i,$ $q\in X_{i+1},$ $r\in X_{i+3}.$
Hence, by condition 3) and Corollary \ref{is_basis}, the vertices of each facet generate the lattice. Therefore,
the vectors $v_i$ define a fan describing toric Fano variety. From the description of maximal cones, we see that primitive collections
are precisely $X_i\cup X_{i+1},$ $i\in \Z/5.$
\end{proof}

\section{Construction of varieties}
\label{construction}

In this section we define a family of toric Fano varieties with Picard number three, parameterized by three positive integers. We will
use it to prove the main theorem.

Take some integers $n\geq 2,k\geq 2,a\geq 1.$ We define five collections of vectors in $\Z^3:$

1) $|X_0|=n+2a,$ $E_{0,1}=\dots=E_{0,n+2a}=(1,0,0);$

2) $|X_1|=1,$ $E_{1,1}=(0,1,0);$

3) $|X_2|=k,$ $E_{2,1}=\dots=E_{2,k-1}=(0,1,1),$ $E_{2,k}=(-a,1,1);$

4) $|X_3|=n,$ $E_{3,1}=\dots=E_{3,n-1}=(0,0,1),$ $E_{3,n}=(-a,0,1);$

5) $|X_4|=1,$ $E_{4,1}=(1,-1,0).$

\begin{prop}For any $n,k,a,$ the Gale dual collection to $X_0\cup\dots\cup X_4$ defines
a toric Fano variety with five primitive collections $X_i\cup X_{i+1}.$\end{prop}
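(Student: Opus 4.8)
The plan is to verify the three conditions of Proposition~\ref{descr_by_divisors} for the explicit collection $E_{0,*},E_{1,1},E_{2,*},E_{3,*},E_{4,1}\in\Z^3$, after which the conclusion (a toric Fano variety with the five prescribed primitive collections) follows immediately. First I would check that these vectors generate $\Z^3$: this is clear, since $(1,0,0)=E_{0,1}$, $(0,1,0)=E_{1,1}$, and $(0,0,1)=E_{3,1}$ are already among them. For condition~1) I would exhibit a single functional positive on all the $E_j$; since every listed vector has positive last coordinate except $E_{1,1}=(0,1,0)$ and $E_{4,1}=(1,-1,0)$ which have positive first or second coordinate, a functional such as $l=(2,1,3)$ (so $l(E_j)>0$ for every $j$) should work after a quick check against each of the five families.

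The bulk of the work is condition~2): producing functionals $l_0,\dots,l_4\in(\R^3)^{\vee}$ with the prescribed sign pattern and vanishing on the sum $S:=E_1+\dots+E_N$. I would first compute $S$ explicitly by adding the contributions of each block: $X_0$ contributes $(n+2a)(1,0,0)$, $X_1$ contributes $(0,1,0)$, $X_2$ contributes $(k-1)(0,1,1)+(-a,1,1)=(-a,k,k)$, $X_3$ contributes $(n-1)(0,0,1)+(-a,0,1)=(-a,0,n)$, and $X_4$ contributes $(1,-1,0)$; summing gives $S=(n+a+1,\,k,\,k+n)$. The requirement $l_i(S)=0$ then pins down each $l_i$ up to the sign constraints. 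The key combinatorial fact I would use is that the complement of $X_i\cup X_{i+1}$ is $X_{i+2}\cup X_{i+3}\cup X_{i+4}$, so $l_i$ must be positive on the two "adjacent" blocks and negative on the other three; I would construct each $l_i$ by choosing values that separate the clustered vectors (note that within each block the vectors differ only in the first coordinate by multiples of $a$, so any functional with small enough first-coordinate weight treats a whole block uniformly in sign). I expect to solve for the five functionals one index at a time, exploiting the near-degeneracy of the $E_j$ to get robust strict inequalities.

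For condition~3) I would verify that for each $i$ and each triple $p\in X_i$, $q\in X_{i+1}$, $r\in X_{i+3}$, the three vectors $E_p,E_q,E_r$ form a $\Z$-basis of $\Z^3$, equivalently that the $3\times3$ determinant equals $\pm1$. Since each block is an arithmetic family in the first coordinate, there are only finitely many determinant shapes to compute per $i$ (one for each choice of whether $q$ or $r$ is the "shifted" vector carrying the $-a$), and I expect each to evaluate to $\pm1$ by a short direct expansion; the shifts by $a$ in the first coordinate cancel in the $3\times3$ determinant because the relevant minors involving the second and third coordinates are unimodular. The main obstacle will be the bookkeeping in condition~2): finding functionals that simultaneously vanish on $S$ and realize the correct sign on all five blocks uniformly in $n,k,a$ requires a careful choice, and I would organize the verification as a single sign table covering the five blocks against the five functionals rather than checking cases ad hoc. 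Once all three conditions hold, Proposition~\ref{descr_by_divisors} yields the claim directly.
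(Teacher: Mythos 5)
Your overall strategy is exactly the paper's: verify the three hypotheses of Proposition~\ref{descr_by_divisors} for the explicit vectors and conclude. However, as written the proposal has two concrete errors and one genuine gap. First, the functional $l=(2,1,3)$ you propose for condition~1) does not work uniformly in $a$: on $E_{3,n}=(-a,0,1)$ it gives $-2a+3$, which is negative for $a\geq 2$ (and similarly $l(E_{2,k})=4-2a\leq 0$). Your justification --- that every vector has positive last coordinate except two with positive first or second coordinate --- is insufficient precisely because the first coordinate of $E_{2,k}$ and $E_{3,n}$ is $-a$ and can be arbitrarily negative; the coefficient of $z$ must grow with $a$, as in the paper's choice $x+\frac{y}{2}+(a+1)z$. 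Second, your computation of $S=\sum E_{i,j}$ is off: the first coordinate is $(n+2a)+0-a-a+1=n+1$, not $n+a+1$, so $S=(n+1,k,k+n)$. Since the $l_i$ are required to vanish on $S$, this error would propagate into any attempted construction of them.

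The more serious issue is that condition~2) is only sketched, not proved. The existence of five functionals $l_0,\dots,l_4$ vanishing on $S$ and realizing the prescribed sign pattern on all five blocks, uniformly in $n,k,a$, is the substantive content of the verification, and it is not automatic from the "near-degeneracy" of the $E_j$: one must actually exhibit them (the paper does so by passing to $\R^3/(\R\cdot K)$ via an explicit projection $\pi$ and writing down $f_0,\dots,f_4$ whose coefficients depend on $n,k,a$). A "sign table" is the right organizational device, but without the explicit functionals the argument is a plan rather than a proof. Your treatment of condition~3) is fine: there are finitely many determinant shapes per $i$, and the $-a$ shifts indeed cancel because the complementary $2\times 2$ minors in the second and third coordinates are unimodular; a direct check confirms all determinants are $\pm 1$.
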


\begin{proof}We will just apply Proposition \ref{descr_by_divisors} and check that all required conditions are satisfied.

First, take the functional $x+\frac{y}2+(a+1)z.$ It is positive on each $E_{i,j},$ hence the condition 1) is satisfied.

Further, we have $-K:=\sum\limits_{i,j}E_{i,j}=(n+1,k,k+n).$ Take the projection
$$\pi:\R^3\to\R^2\cong\R^3/(\R\cdot K),\quad\pi(x,y,z)=\left(\left(k+n\right)x-\left(n+1\right)z,\left(k+n\right)y-kz\right),$$
and put $\overline{E_{i,j}}:=\pi\left(E_{i,j}\right).$ Then we have:

$\overline{E_{0,1}}=\dots=\overline{E_{0,n+2a}}=(k+n,0);$

$\overline{E_{1,1}}=(0,k+n).$

$\overline{E_{2,1}}=\dots=\overline{E_{2,k-1}}=(-n-1,n),$ $\overline{E_{2,k}}=(-ka-na-n-1,n);$

$\overline{E_{3,1}}=\dots=\overline{E_{3,n-1}}=(-n-1,-k),$ $\overline{E_{3,n}}=(-ka-na-n-1,-k)$

$\overline{E_{4,1}}=(k+n,-k-n).$

The required functionals $l_i\in\left(\R^3\right)^{\vee}$ can be defined as pullbacks: $l_i=\pi^*\left(f_i\right),$ where

$f_0=2nx+(2n+1)y;$

$f_1=-x+(ka+na+n+2)y;$

$f_2:=-2x-y;$

$f_3:=-x-(ka+na+n+2)y;$

$f_4:=kx-y.$

Thus, the condition 2) is also satisfied. Finally, condition 3) is checked straightforwardly by computing the determinants.
\end{proof}

Denote by $Y_{n,k,a}$ the toric Fano variety which is obtained from the above Proposition.
For completeness, we describe explicitly the corresponding fan $\Sigma_{n,k,a}.$ To obtain the Gale dual collection to $E_{i,j},$
we choose a basis of additive relations on $E_{i,j}:$
$$E_{0,1}-E_{1,1}-E_{4,1}=0;$$
$$E_{0,j}-E_{0,j+1}=0,\quad 1\leq j\leq n+2a-1;$$
$$E_{1,1}-E_{2,k}+E_{3,n}=0;$$
$$E_{2,j}-E_{2,j+1}=0,\quad 1\leq j\leq k-2;$$
$$E_{2,k-1}-E_{2,k}-E_{3,n-1}+E_{3,n}=0;$$
$$E_{3,j}-E_{3,j+1}=0,\quad 1\leq j\leq n-2;$$
$$aE_{2,k-1}-(a+1)E_{3,n-1}+E_{3,n}+aE_{4,1}=0.$$

Now, the Gale dual collection $v_{i,j}$ in $\Z^{2n+2a+k-1}$ is the following:
$$v_{0,1}=e_1+e_2,\quad v_{0,i}=e_{i+1}-e_i,\quad 2\leq i\leq n+2a-1,\quad v_{0,n+2a}=-e_{n+2a};$$
$$v_{1,1}=e_{n+2a+1}-e_1;$$
$$v_{2,1}=e_{n+2a+2}\text{ if }k\geq 3,\quad v_{2,i}=e_{n+2a+i+1}-e_{n+2a+i},\quad 2\leq i\leq k-2,$$
$$v_{2,k-1}=e_{n+2a+k}+ae_{2n+2a+k-1}-e_{n+2a+k-1},\quad v_{2,k}=-e_{n+2a+1}-e_{n+2a+k};$$
$$v_{3,1}=e_{n+2a+k+1}\text{ if }n\geq 3,\quad v_{3,i}=e_{n+2a+k+i}-e_{n+2a+k+i-1},\quad 2\leq i\leq n-2,$$
$$v_{3,n-1}=\begin{cases}-e_{n+2a+k}-e_{2n+2a+k-2}-(a+1)e_{2n+2a+k-1} & \text{if }n\geq 3;\\
-e_{n+2a+k}-(a+1)e_{2n+2a+k-1} & \text{if }n=2;\end{cases}$$
$$v_{3,n}=e_{n+2a+1}+e_{n+2a+k}+e_{2n+2a+k-1},\quad v_{4,1}=ae_{2n+2a+k-1}-e_1.$$

The vectors $v_{i,j}$ are vertices of some convex simplicial polytope $Q\subset \R^{2n+2a+k-1}$ containing zero. The maximal cones
of $\Sigma_{n,k,a}$ are cones over the facets of $Q.$ The sets of vertices of the facets are precisely the complements
to the sets of the form
$$\{v_{i,j_1}, v_{i+1,j_2}, v_{i+3,j_3}\}.$$
This describes the fan $\Sigma_{n,k,a}$ completely.

For any variety $Y$ with exceptional structure sheaf denote by $l\left(Y\right)$ the maximal length of exceptional collections of line bundles.
Clearly, if a variety $Y$ has a full exceptional collection of line bundles, then $K_0(Y)\cong \Z^{l\left(Y\right)}.$

We will prove the following result:

\begin{theo}\label{<c} For any constant $c>\frac34$ and any
$a\in\Z_{>0}$ there exist $n_0(a,c)\in\Z_{>0}$ such that for any $n\geq n_0(a,c)$ there exists $k_0(n,a,c)\in\Z_{>0}$
such that for any $k\geq k_0(n,a,c)$ we have $$l\left(Y_{n,k,a}\right)<c\rk K_0(Y_{n,k,a}).$$\end{theo}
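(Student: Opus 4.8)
The plan is to convert the statement into a lattice-point counting problem via the cohomology description of Section \ref{cohomology}, and then to extract a density bound of $\tfrac34+o(1)$ by a packing argument tied to the explicit combinatorics of $Y_{n,k,a}$. First I would reformulate exceptionality. Since each line bundle on a Fano variety is automatically exceptional ($H^{>0}(\mathcal O)=0$), an exceptional collection of line bundles is a finite subset $S\subset\Pic(Y)\cong\Z^3$ admitting a total order $\prec$ with $H^\bullet(L-L')=0$ whenever $L\prec L'$. By Corollary \ref{non-trivial_cohom} this says exactly that every backward difference $L-L'$ lies in the no-cohomology locus $\mathcal A:=\Pic(Y)\setminus\bigcup_I K_I$, and by Lemma \ref{union} the only $I$ producing forbidden sets are unions of the primitive collections $X_i\cup X_{i+1}$ ($i\in\Z/5$); thus for $Y_{n,k,a}$ the whole family $\{K_I\}$ is governed by just these five basic cones and their unions. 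I would also record the Serre-duality symmetry of $\mathcal A$ under $L\mapsto-\sum_\sigma E_\sigma-L$. Finally I would note the trivial bound $l(Y)\le\rk K_0(Y)$, valid for any variety because the Euler pairing on the classes $[L_i]$ is unitriangular, hence they are linearly independent in $K_0$; the content of the theorem is to save a definite fraction below this bound.

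Second, I would make the forbidden sets explicit. Using the listed divisors I would compute, for each primitive collection $P=X_i\cup X_{i+1}$, the vector $\sum_{\sigma\in P}E_\sigma$ and describe $K_P$ as the shifted cone $-\sum_{\sigma\in P}E_\sigma+\operatorname{cone}\bigl(\{-E_\sigma:\sigma\in P\}\cup\{E_\sigma:\sigma\notin P\}\bigr)$, treating the relevant unions $K_{P\cup P'}$ the same way. These five shifted cones, together with the effective cone $K_\emptyset$ and its Serre dual $K_{\Sigma(1)}$, cut $\Pic_\R(Y)$ into regions and pin $\mathcal A$ down as a concrete, combinatorially simple region whose shape is controlled by the sizes $|X_i|$ and by the two exceptional rays $E_{2,k}=(-a,1,1)$, $E_{3,n}=(-a,0,1)$.

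Third, the heart of the argument is to turn ``all backward differences lie in $\mathcal A$'' into a packing bound. I would exhibit a family of local blocking configurations: small tuples of candidate line bundles, related by explicit lattice translations built from the $E_{i,j}$ and reflecting the five primitive collections, that no exceptional collection can contain in full, because the Hom-vanishings they would require force a difference and its negative into two opposite forbidden cones simultaneously. Packing these configurations so that they asymptotically tile the window of candidate bundles, with four candidates per block but at most three admissible collection members, yields $l(Y)\le(\tfrac34+o(1))\,|W|$ for the window $W$; comparing $|W\cap\Z^3|$ with the count \eqref{rk_K_0}, $\rk K_0(Y)=\sum_{i=0}^4|X_i||X_{i+1}||X_{i+3}|$ (whose leading term for fixed $n,a$ as $k\to\infty$ is $k\,(n+2a)(n+1)$), would give $l(Y)<c\,\rk K_0(Y)$ once the $o(1)$ error is beaten. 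This is precisely what the nested quantifiers provide: one first takes $n\ge n_0(a,c)$ to push the $n$-dependent error below $c-\tfrac34$, then $k\ge k_0(n,a,c)$ to make the lower-order-in-$k$ terms negligible against the leading $k$-term.

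The main obstacle, and the source of the ``lots of technical bounds'' the author warns about, is this third step together with the geometry feeding it. Determining $\mathcal A$ sharply enough — which of the five cones actually bind in each region, and how the rays $(-a,1,1)$ and $(-a,0,1)$ distort them — requires careful bookkeeping, and the passage from the local blocking configurations to a clean, \emph{uniform} $\tfrac34$-density estimate must be carried out with all parameter-dependent error terms tracked, so that the two successive limits genuinely bring the ratio below an arbitrary $c>\tfrac34$. The remaining delicate point is to verify that the window $W$ really captures every exceptional collection, so that no members escape the packing.
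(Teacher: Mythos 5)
Your first two steps (recasting exceptionality as ``all backward differences avoid the forbidden sets $K_I$'' and writing those sets as explicit shifted cones in $\Pic(Y)\cong\Z^3$) match the paper's setup, modulo a small imprecision: by Theorem \ref{description}(3) the relevant $I$ are not arbitrary unions of primitive collections but exactly $\emptyset$, $\Sigma(1)$, the five triples $X_i\cup X_{i+1}\cup X_{i+2}$ and the five pairs $X_{i+3}\cup X_{i+4}$, giving twelve concrete sets $K_{eff},K_{neg},K_i,\widehat{K_i}$. The genuine gap is in your third step. The ``local blocking configurations'' (four candidates per block, at most three admissible) are never exhibited, and this is exactly where the constant $\frac34$ has to come from; as written, the argument assumes its conclusion. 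Moreover the admissible locus $\mathcal{A}$ is not the complement of a lattice-periodic set, so there is no tiling of a window by translates of a fixed blocking pattern, and no window $W$ with $|W\cap\Z^3|$ comparable to $\rk K_0(Y)$ is ever identified against which a density of $\frac34$ could be measured.

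What the paper actually does is different and more concrete. Step 1: the forbidden sets force $\max(z_i)-\min(z_i)\leq n+k+\left\lceil\frac{n}{a}\right\rceil+1$, so the collection lives in about $n+k$ horizontal slices $\{z=z'\}$. Steps 3--4: within a single slice, the conditions $(L_i-L_j)\notin K_{all}$ confine the points $(x_i,y_i)$ to the intersection of three strips of width roughly $n$ in the directions $x$, $y$ and $x+y$; this intersection is a hexagon of area at most $(\frac34+\epsilon)n^2+O(n)$, and Pick's theorem converts the area bound into a lattice-point bound. This hexagon computation is the sole source of $\frac34$. Step 2 is a further necessary ingredient your outline does not anticipate: in some slices the $y$-amplitude may exceed $n(1+\epsilon)$, where only the weaker bound $(n+2a+1)(n+3)\approx n^2$ holds, and the paper must prove --- via a pigeonhole argument over residues of $z$ modulo $\left\lceil\frac{n}{a}\right\rceil+2$ together with the forbidden sets $K_0,\widehat{K_0},K_3,\widehat{K_3}$ --- that the number $T_\epsilon$ of such bad slices is small enough not to spoil the count. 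Summing $\frac34 n^2$ over $\approx k$ good slices against $\rk K_0\approx n^2k$ then yields the theorem with the two nested limits you correctly identified. Without an explicit substitute for the hexagon/Pick estimate and for the control of the bad slices, your outline does not yet constitute a proof.
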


Clearly, to obtain toric Fano's without exceptional collections of line bundles, it suffices to take $c=1.$ In this case we have the following explicit result:

\begin{theo}\label{counterex}Let $a=1,$ $n=16,$ $k\geq 386.$ Then $$l\left(Y_{n,k,a}\right)<\rk K_0(Y_{n,k,a}).$$
In particular, there does not exist a full exceptional collection of line bundles on $Y_{n,k,a}$\end{theo}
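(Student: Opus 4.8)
The plan is to read Theorem \ref{counterex} not as a new result but as the fully \emph{effective} specialization of Theorem \ref{<c} to the single triple $c=1$, $a=1$, $n=16$. The proof of Theorem \ref{<c} does more than assert the existence of the thresholds $n_0(a,c)$ and $k_0(n,a,c)$: through the cohomological criterion of Corollary \ref{non-trivial_cohom} together with Lemma \ref{union} it manufactures an explicit upper bound
$$l(Y_{n,k,a}) \le \Phi(n,k,a)$$
for the maximal length of an exceptional collection of line bundles, and then compares $\Phi$ against $\rk K_0(Y_{n,k,a})$. My first step is therefore to extract this $\Phi$ from the argument of Theorem \ref{<c}, but this time retaining every additive constant instead of absorbing lower-order terms into the thresholds as the asymptotic statement allows.

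Recall the mechanism producing $\Phi$. An ordered family $(L_1,\dots,L_s)$ of line bundles on $Y_{n,k,a}$ is exceptional exactly when $H^{\bullet}(L_j-L_i)=0$ in $\Pic(Y_{n,k,a})\cong\Z^3$ for all $i>j$; by Corollary \ref{non-trivial_cohom} this says that every backward difference $L_j-L_i$ avoids each forbidden set $K_I$, and by Lemma \ref{union} the only relevant indices $I$ are unions of the five primitive collections $X_i\cup X_{i+1}$. Thus the members of an exceptional collection form a configuration of lattice points in $\Z^3$ whose pairwise differences are constrained by five translated forbidden cones, each computed from the divisors $E_{i,j}$ of the family. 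The technical estimates of Theorem \ref{<c} bound the number of points admitting such a configuration and deliver $\Phi(n,k,a)$, which for fixed $n,a$ is piecewise affine in $k$ with leading coefficient strictly below that of $\rk K_0$ (their ratio being the limiting density that tends to $\tfrac34$ as $n\to\infty$).

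Next I would specialize both sides. From \eqref{rk_K_0}, with $|X_0|=n+2a$, $|X_1|=|X_4|=1$, $|X_2|=k$, $|X_3|=n$, one gets
$$\rk K_0(Y_{n,k,a}) = n(n+2a) + n + k\bigl(1 + (n+2a)(n+1)\bigr),$$
which at $a=1$, $n=16$ becomes $\rk K_0(Y_{16,k,1}) = 307k + 304$. The remaining task is then purely arithmetic: substitute $a=1$, $n=16$ into $\Phi$ and verify the one-variable inequality $\Phi(16,k,1) < 307k+304$. Since $\Phi(16,k,1)$ is affine in $k$ with leading coefficient strictly smaller than $307$, the quantity $307k+304-\Phi(16,k,1)$ is eventually positive, and solving the inequality pins down the crossover at $k\ge 386$. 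Finally, $l(Y_{16,k,1}) < \rk K_0(Y_{16,k,1})$ rules out a full exceptional collection of line bundles: such a collection would give $K_0(Y_{16,k,1})\cong\Z^{l}$ and generate $K_0$, forcing its length to equal $\rk K_0$, a contradiction.

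The main obstacle lies entirely in the first and third steps. The proof of Theorem \ref{<c} is designed so that $\Phi/\rk K_0\to\tfrac34$, and it therefore discards additive constants and slack freely into $n_0,k_0$; here I must instead carry them and certify that $n=16$ is already large enough for the (necessarily lossy) bound $\Phi$ to fall below $\rk K_0$. This is precisely the delicate point sensitive to the correction attributed to Borisov, which raised the smallest admissible dimension and hence the least certified value of $n$; the threshold $k\ge 386$ then emerges as the honest solution of the resulting linear inequality rather than the crude $k_0(16,1,1)$ that the asymptotic argument would supply.
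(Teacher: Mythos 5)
Your proposal follows the paper's proof essentially verbatim: the paper likewise takes the explicit upper bound $E(n,k,a,\epsilon)$ from \eqref{mleq_2}, specializes it at $\epsilon=\tfrac18$ (the least admissible value, since Step 5 requires $\epsilon\ge\tfrac{2a}{n}=\tfrac18$ for $n=16$, $a=1$), and checks $\rk K_0(Y_{16,k,1})=307k+304>E(16,k,1,\tfrac18)$ for $k\ge 386$ by solving the resulting quadratic inequality in $k$. The only cosmetic inaccuracy in your description is that the bound is not affine in $k$ --- the first summand of $E$ is a bounded rational function of $k$, so after clearing the denominator the comparison is a quadratic rather than linear inequality --- but this does not change the argument.
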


We denote by $K=K_{Y_{n,k,a}}=-\sum\limits_{i,j}E_{i,j}$ the canonical class of $Y_{n,k,a}.$ For each $i\in\Z/5\Z$ we denote by $K_{i}\subset \Pic Y_{n,k,a}$ the forbidden set corresponding to $X_{i}\cup X_{i+1}\cup X_{i+2},$ and
by $\widehat{K_i}$ the forbidden set corresponding to $X_{i+3}\cup X_{i+4},$ so that $K_{i}=K-\widehat{K_i}.$ Further,
we denote by $K_{eff}\subset \Pic Y_{n,k,a}$ the set of effective line bundles, and by $K_{neg}$ the forbidden set corresponding to $\Sigma(1),$ so that
$K_{neg}=K-K_{eff}.$ Also, we put
$$K_{all}=\bigcup\limits_{i\in\Z/5\Z}\left(K_i\cup\widehat{K_i}\right)\cup K_{eff}\cup K_{neg}.$$
Clearly, $K_{all}=K-K_{all}.$
We use identification $\Pic Y_{n,k,a}\cong\Z^3$ (by our definition of $Y_{n,k,a}$).

\begin{proof}[Proof of Theorem \ref{<c}] Suppose that $(L_1,L_2,\dots,L_m)$ is an exceptional collection of line bundles on $Y_{n,k,a}.$
Denote the coordinates of $L_i$ by $(x_i,y_i,z_i).$ Then we have
\begin{equation}\label{not_belong}(x_i-x_j,y_i-y_j,z_i-z_j)\notin K_{all},\quad\text{for }1\leq i<j\leq m.\end{equation}

Our proof consists of several steps.

{\noindent {\it Step 1.}} First we prove the following bound for the maximal difference between $z_i:$
\begin{equation}\label{amplitude}\max(z_i)-\min(z_i)\leq n+k+\left\lceil\frac{n}a\right\rceil+1.\end{equation}

Straightforward computation shows that the following holds:
\begin{equation}\label{K_eff}K_{eff}=\{(x,y,z)\mid z\geq 0, x\geq -az+\max(-y,0)\};\end{equation}
\begin{equation}\label{hat(K_1)}\widehat{K_1}=\{(x,y,z)\mid y\geq 1, z\geq 0, x\leq -n-2a-1\};\end{equation}
\begin{equation}\label{hat(K_2)}\widehat{K_2}=\{(x,y,z)\mid y\leq z-1, z\geq 0, x\leq -n-2a-1+z-y\}.\end{equation}
From \eqref{K_eff}, \eqref{hat(K_1)} and \eqref{hat(K_2)} we conclude that
\begin{equation}\label{z_positive}\{z\geq \left\lceil \frac{n}a\right\rceil+2\}\subset K_{eff}\cup \widehat{K_1}\cup\widehat{K_2}\subset K_{all}.\end{equation}
By the central symmetry, we have that
\begin{equation}\label{z_negative}\{z\leq -n-k-\left\lceil \frac{n}a\right\rceil-2\}\subset K_{neg}\cup K_1\cup K_2\subset K_{all}.\end{equation}
Combining \eqref{not_belong} with \eqref{z_positive} and \eqref{z_negative}, we obtain the desired estimate \eqref{amplitude}.
We may and will assume that $\max(z_i)=0.$ Then each of $z_i$ belongs to the interval $[-n-k-\left\lceil\frac{n}a\right\rceil-1,0].$

\smallskip

{\noindent {\it Step 2.}} Now choose some $\epsilon>0$ and consider the following functions:
$$y_{max}(z)=\max(\{y_i\mid z_i=z\}),\quad y_{min}(z)=\min(\{y_i\mid z_i=z\}),$$
which are defined for those $z$ for which there exist $i$ with $z_i=z.$
We put \begin{equation}\label{T_eps}T_{\epsilon}=\#\{z\mid y_{max}(z)-y_{min}(z)>n(1+\epsilon)\}.\end{equation}
Our goal is to prove the following upper bound on $T_{\epsilon}:$
\begin{equation}\label{T_eps_bound}T_{\epsilon}<
\frac{(\left\lceil\frac{n}a\right\rceil+2)(k+\epsilon n)}{\epsilon n(n+k+\left\lceil\frac{n}a\right\rceil+2)}.\end{equation}

According to \eqref{amplitude}, the functions $y_{max}$ and $y_{min}$ are defined for at most $(n+k+\left\lceil\frac{n}a\right\rceil+2)$
values of $z.$ Hence, by the Dirichlet's principle, there exists a residue $d\in\Z/(\left\lceil\frac{n}a\right\rceil+2)\Z$ such that
\begin{equation}\label{T_d}\#\{z\equiv d \text{ mod }(\left\lceil\frac{n}a\right\rceil+2)\mid y_{max}(z)-y_{min}(z)>n(1+\epsilon)\}\geq \frac{T_{\epsilon}(n+k+\left\lceil\frac{n}a\right\rceil+2)}{\left\lceil\frac{n}a\right\rceil+2}.\end{equation}
Fix such $d,$ and denote by $T_d$ the LHS of \eqref{T_d}.

We need another four forbidden sets written explicitly:
\begin{equation}\label{K_0}K_0=\{(x,y,z)\mid y\leq \min(-k-1,z-1),x\leq -a(y+k)-n-2a\};\end{equation}
\begin{equation}\label{hat(K_0)}\widehat{K_0}=\{(x,y,z)\mid y\geq max(1,z+n+1), x\geq -ay+2a-1\};\end{equation}
\begin{equation}\label{K_3}K_3=\{(x,y,z)\mid y\geq\max(z+n+1,1),x\leq a(y-z-n-1)-n-a-1\};\end{equation}
\begin{equation}\label{hat(K_3)}\widehat{K_3}=\{(x,y,z)\mid y\leq\min(z-1,-k-1), x\geq a(y-z)+2a\}.\end{equation}

Then it is easy to see that
\begin{equation}\label{ygeq n+1}\{(x,y,z)\mid z\leq 0,y\geq n+1\}\subset \widehat{K_0}\cup K_3,\end{equation}
\begin{equation}\label{yleq -n-k-1}\{(x,y,z)\mid z\geq -n-k, y\leq -n-k-1\}\subset K_0\cup \widehat{K_3}.\end{equation}

Now let $$\{r_1,\dots,r_{T_d}\}=\{z\equiv d \text{ mod }(\left\lceil\frac{n}a\right\rceil+2)\mid y_{max}(z)-y_{min}(z)>n(1+\epsilon)\},$$
where $r_1<r_2<\dots<r_{T_d}.$ From the definition of $r_i$ it follows that $r_i-r_j\geq \left\lceil\frac{n}a\right\rceil+2$ for $i>j.$
Hence, from \eqref{z_positive} we conclude that equalities $z_s=r_i,z_w=r_j$ imply that $\Sign(s-w)=\Sign(i-j).$ Thus,
inclusion \eqref{ygeq n+1} implies that
$$y_{max}(r_{i})-y_{min}(r_{i+1})\leq n,\quad 1\leq i\leq T_d-1.$$
By definition of $r_i,$ we obtain
$$y_{max}(r_i)-y_{max}(r_{i+1})<y_{max}(r_i)-y_{min}(r_{i+1})-n(1+\epsilon)\leq -n\epsilon,\quad 1\leq i\leq T_d-1.$$
Summing up the above inequality over $2\leq i\leq T_d-1,$ together with the inequality $y_{min}(r_2)-y_{max}(r_2)<-n(1+\epsilon),$
we obtain
\begin{equation}\label{large_difference}y_{min}(r_2)-y_{max}(y_{T_d})<-n-\epsilon n(T_d-1).\end{equation}
Further, we have $$r_2-r_d=(r_2-r_1)+(r_1-r_d)\geq \left\lceil\frac{n}a\right\rceil+2+(-n-k-\left\lceil\frac{n}a\right\rceil-1)\geq -n-k+1.$$
Combining this with \eqref{yleq -n-k-1} and \eqref{large_difference}, we get an estimate
$$T_d<\frac{k}{\epsilon n}+1.$$
This inequality, together with \eqref{T_d}, gives us the desired bound:
$$T_{\epsilon}<\frac{(\left\lceil\frac{n}a\right\rceil+2)(k+\epsilon n)}{\epsilon n(n+k+\left\lceil\frac{n}a\right\rceil+2)}.$$

\smallskip

{\noindent {\it Step 3.}} Now fix some $z'.$ We are going to prove the following upper bound on the number of $i$ with $z_i=z':$
\begin{equation}\label{z_fixed}\#\{i\mid z_i=z'\}\leq (n+2a+1)(n+3).\end{equation}

Denote by
$(L_{i_1},\dots,L_{i_t})$ the subcollection which consists of bundles with $z$-coordinate $z'$ (we assume that $t>0$). Also put
\begin{equation}(p_j,q_j):=(x_{i_j},y_{i_j}),\quad 1\leq j\leq t.\end{equation}

We need explicit descriptions of one more forbidden sets:

\begin{equation}\label{K_4}K_4=\{(x,y,z)\mid z\geq 0,x\leq -n-2a-1+\min(0,z-y)\}.\end{equation}
Write down explicitly intersections of \eqref{K_eff}, \eqref{hat(K_0)}, \eqref{hat(K_1)},\eqref{hat(K_2)} and \eqref{K_4} with the plane $\{z=0\}:$
$$M_{eff}=K_{eff}\cap \{z=0\}=\{(x,y)\mid x\geq \max(-y,0)\};$$
$$\widehat{M_0}=\widehat{K_0}\cap\{z=0\}=\{(x,y)\mid y\geq n+1, x\geq -ay+2a-1\};$$
$$\widehat{M_1}=\widehat{K_1}\cap\{z=0\}=\{(x,y)\mid y\geq 1, x\leq -n-2a-1\};$$
$$\widehat{M_2}=\widehat{K_2}\cap\{z=0\}=\{(x,y)\mid y\leq -1, x\leq -n-2a-1-y\};$$
$$M_4=K_4\cap\{z=0\}=\{(x,y)\mid x\leq -n-2a-1+\min(0,-y)\}.$$
Also put $M_{all}:=K_{all}\cap\{z=0\}.$

From \eqref{not_belong} we see that
\begin{equation}\label{not_belong2}(p_i-p_j,q_i-q_j)\notin M_{all}\quad\text{for }1\leq i<j\leq t.\end{equation}

It is easy to see that

$$\{x+y\leq -n-2a-1\}\subset \widehat{M_2}\cup M_4\subset M_{all};$$
$$\{x+y\geq n+2a+1\}\subset M_{eff}\cup \widehat{M_0}\subset M_{all}.$$
Therefore,
\begin{equation}\label{|x+y|}\{|x+y|\geq n+2a+1\}\subset M_{all}.\end{equation}

Further, note that
\begin{equation}\label{x+y=0}\{y=-x\leq 0\}\subset M_{eff}\subset M_{all},\quad \{y=-x\geq n+3\}\subset \widehat{M_0}\cup \widehat{M_1}\subset M_{all}\end{equation}
(in the second inequality, for $a\geq 2$ the set $\widehat{M_1}$ is unnecessary, and $n+3$ can be replaced by $n+1$). Combining \eqref{not_belong2} with \eqref{|x+y|} and \eqref{x+y=0}, we obtain that
$$\max(p_j+q_j)-\min(p_j+q_j)\leq n+2a,$$
and each line $x+y=d$ contains at most $n+3$ points $(p_i,q_i).$ Therefore,
$$t\leq (n+2a+1)(n+3),$$ the desired inequality \eqref{z_fixed} is proved.

Upper bounds \eqref{amplitude} and \eqref{z_fixed} are yet not sufficient for our purposes.

\smallskip

{\noindent {\it Step 4.}} With notation of Step 3, choose some $\epsilon\geq \frac{2a}n$ and make an additional assumption:
\begin{equation}\max(q_j)-\min(q_j)\leq n(1+\epsilon).\end{equation}
Under these assumptions, we will obtain another upper bound on $t:$
\begin{equation}\label{z_fixed_str}\#\{i\mid z_i=z'\}\leq (\frac34+\epsilon)n^2+(\frac32+\epsilon+a+2\epsilon a)n-a^2+a+1.\end{equation}

We may and will assume that $\max(q_i)=\max(p_i+q_i)=0.$ Then for all $1\leq i\leq t$ we have
\begin{equation}\label{parallelogramm}(p_i,q_i)\in\{-\lfloor n(1+\epsilon)\rfloor\leq q\leq 0,-n-2a\leq p+q\leq 0\}.\end{equation}
Further, choose indices $b$ and $u$ such that $p_b=\min(p_i),$ $p_u=\max(p_i).$

Suppose that $p_u-p_b> n(1+\epsilon).$ Then we have
$$(p_b-p_u,q_b-q_u)\in \widehat{M_1}\cup\widehat{M_2}\cup M_4\subset M_{all},$$
since $p_b-p_u<-n(1+\epsilon)<-n-2a$ by our assumption. Hence, $b>u.$
On the other hand, it follows from \eqref{parallelogramm} that
$$(p_u-p_b,q_u-q_b)\in M_{eff}\subset M_{all},$$
hence $u>b,$ a contradiction.

Therefore, $p_u-p_b\leq n(1+\epsilon),$ and we have
\begin{multline}(p_i,q_i)\in\{-\lfloor n(1+\epsilon)\rfloor\leq q\leq 0,-n-2a\leq p+q\leq 0,\\ p_b\leq p\leq p_b+\lfloor n(1+\epsilon)\rfloor.\}
:=Q\subset \R^2\end{multline}
We are interested in the upper bound on the number of integral points in the polygon $Q.$ Denote by $N_1$ (resp. $N_2$) the number of integral
points in the interior of $Q$ (resp. on the boundary of $Q$). By Pick's Theorem, we have
$$\Area(Q)=N_1+\frac{N_2}2-1.$$
Hence,
\begin{equation}\label{Pick}t\leq N_1+N_2=\Area(Q)+\frac{N_2}2+1.\end{equation}
First we make an estimate on $\Area(Q).$ Here we may assume that $p_b\leq 0$ (because for $p_b>0$ the polygon $Q$ is smaller than for $p_b=0$).
Then, we have
\begin{multline}\label{area(Q)}\Area(Q)=\lfloor n(1+\epsilon)\rfloor(n+2a)-\frac12(p_b^2+(n+2a+p_b)^2)\\
\leq (1+\epsilon)n(n+2a)-(\frac{n}2+a)^2=(\frac34+\epsilon)n^2+(1+2\epsilon)an-a^2.\end{multline}
Further, it is easy to make an estimate on $N_2.$ Here we also may assume that $p_b\leq 0,$ and then
\begin{multline}\label{boundary}N_2=(-p_b)+(\lfloor n(1+\epsilon)\rfloor+p_b)+(-p_b)+(n+2a+p_b)
\\+(\lfloor n(1+\epsilon)\rfloor-n-2a-p_b)+(n+2a+p_b)=2\lfloor n(1+\epsilon)\rfloor+n+2a\leq (3+2\epsilon)n+2a.\end{multline}
Combining inequality \eqref{Pick} with estimates \eqref{area(Q)} and \eqref{boundary}, we conclude that
\begin{multline*}t\leq (\frac34+\epsilon)n^2+(1+2\epsilon)an-a^2+\frac12((3+2\epsilon)n+2a)+1\\
=(\frac34+\epsilon)n^2+(\frac32+\epsilon+a+2\epsilon a)n-a^2+a+1,\end{multline*}
the desired inequality \eqref{z_fixed_str} is proved.

\smallskip

{\noindent {\it Step 5.}}

Now we apply estimates \eqref{amplitude}, \eqref{T_eps_bound}, \eqref{z_fixed} and \eqref{z_fixed_str} to finish the proof. From this moment we assume that $\frac{2a}{n}\leq \epsilon<\frac14$

First, from \eqref{amplitude}, \eqref{z_fixed}, \eqref{z_fixed_str} we obtain the upper bound on the length of our exceptional collection
\begin{multline}\label{mleq}m\leq T_{\epsilon}\cdot \left(n+2a+1\right)\left(n+3\right)\\
+\left(n+k+\left\lceil\frac{n}a\right\rceil+2-T_{\epsilon}\right)
\left(\left(\frac34+\epsilon\right)n^2+\left(\frac32+\epsilon+a+2\epsilon a\right)n-a^2+a+1\right)\\
\leq T_{\epsilon}\cdot \left(\left(\frac14-\epsilon\right)n^2+\left(a+\frac52\right)n+a^2+3a+2\right)\\+
\left(n+\left\lceil\frac{n}a\right\rceil+k+2\right)\left(\left(\frac34+\epsilon\right)n^2+\left(\frac32 a+2\right)n-a^2+a+1\right).\end{multline}

Combining \eqref{mleq} with \eqref{T_eps_bound}, we get the following inequality:
\begin{multline}\label{mleq_2}m\leq \frac{(\left\lceil\frac{n}a\right\rceil+2)(k+\epsilon n)}{\epsilon n(n+\left\lceil\frac{n}a\right\rceil+k+2)}\left(\left(\frac14-\epsilon\right)n^2+\left(a+\frac52\right)n+a^2+3a+2\right)\\
+\left(n+\left\lceil\frac{n}a\right\rceil+k+2\right)\left(\left(\frac34+\epsilon\right)n^2+\left(\frac32 a+2\right)n-a^2+a+1\right)=:E(n,k,a,\epsilon).\end{multline}

By the formula \eqref{rk_K_0},
we have that
\begin{multline}\label{K_0(Y_n,k,a)}\rk K_0(Y_{n,k,a})=(n+2a)n+k+kn(n+2a)+n+(n+2a)k\\
=n^2k+2akn+n^2+nk+2an+2ak+k+n.\end{multline}
Now, combining \eqref{K_0(Y_n,k,a)} and \eqref{mleq_2}, we can write
\begin{equation}\label{difference}c\rk K_0(Y_{n,k,a})-E(n,k,a,\epsilon)=\frac{P_2(n,a,\epsilon)k^2+P_1(n,a,\epsilon)k+P_0(n,a,\epsilon)}{\epsilon n(n+\left\lceil\frac{n}a\right\rceil+k+2)},\end{equation}
where
\begin{multline}\label{P_2}P_2(n,a,\epsilon)=\epsilon n(cn(n+2a)+cn+2ca+c-\left(\frac34+\epsilon\right)n^2\\
-\left(\frac32 a+2\right)n+a^2-a-1)
=\epsilon n((c-\frac34-\epsilon) n^2+(2ac+c-\frac32 a-2)n+2ac+c+a^2-a-1).\end{multline}

Now choose some $0<\epsilon<c-\frac34.$ By \eqref{P_2}, there exists $n_0(a,\epsilon)\geq \frac{2a}{\epsilon}$ such that for $n\geq n_0(a,\epsilon)$ we have $P_2(n,a,\epsilon)>0.$
Further, for such $n,$ according to \eqref{difference}, there exists $k_0(n,a,\epsilon)>0$ such that for $k\geq k_0(n,a,\epsilon)$
we have $c\rk K_0(Y_{n,k,a})>E(n,k,a,\epsilon).$ Finally, combining with \eqref{mleq_2}, we conclude that for such $n,k,a$
$$l(Y_{n,k,a})<c\rk K_0(Y_{n,k,a}).$$
Theorem is proved.
\end{proof}

\begin{proof}[Proof of Theorem \ref{counterex}.] We will apply the proof of the previous Theorem. Namely, by \eqref{mleq_2},
we have that $$l(Y_{16,k,1})\leq E(16,k,1,\frac18).$$
Further, a straightforward computation (solving quadratic inequality in one variable) shows that
$$\rk K_0(Y_{16,k,1})>E(16,k,1,\frac18),\quad\text{ for }k\geq 386.$$
This proves Theorem.
\end{proof}

\section{Strong exceptional collections of length at least $\frac34\rk K_0(Y).$}
\label{at_least_sect}

In this section we prove the following theorem.

\begin{theo}\label{at least}For any toric nef-Fano DM stack $Y$ with Picard number three, there exists a strong exceptional collection of line bundles on $Y$
of length at least $\frac34 \rk K_0(Y).$\end{theo}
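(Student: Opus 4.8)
The plan is to use the integral-point counting strategy already outlined in the introduction. First I would produce a centrally symmetric convex polytope $P\subset \Pic_{\R}(Y)\cong\R^3$ whose interior, when suitably shifted, detects strong exceptionality. The natural candidate is the set
\[
P=\{x\in\Pic_{\R}(Y)\mid x\notin \Int(K_I-K_J)\text{ for all relevant forbidden }K_I,K_J\},
\]
or, more concretely, the polytope cut out by the linear inequalities coming from the effective cone $K_{eff}$ and the five pairs of forbidden sets $K_i,\widehat{K_i}$ described in the Picard-number-three setting. By Corollary \ref{non-trivial_cohom}, a collection of line bundles $(L_s)$ is strong exceptional precisely when all pairwise differences $L_s-L_t$ avoid every forbidden $K_I$ with $I\ne\emptyset$ in the positive direction and (for the $\Hom$-vanishing in the other direction) avoid $K-K_I$; the key geometric observation I would establish is that this avoidance is equivalent to all differences lying in the interior of a single fixed centrally symmetric $P$. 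Thus the integral points lying in the interior of any translate of $\tfrac12 P$ form a strong exceptional collection, since their pairwise differences lie in $\Int(P)$.

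Next I would run the averaging argument. Having fixed $P$, for a translate $\tfrac12 P+t$ let $f(t)$ be the number of integral points in its interior; then $\int_{\Pic_{\R}(Y)/\Pic(Y)}f(t)\,dt=\Vol(\tfrac12 P)=\tfrac18\Vol(P)$ by Fubini (each integral lattice point contributes the volume of $\tfrac12 P$ to the integral over a fundamental domain). Hence there exists a shift $t_0$ with $f(t_0)\geq \tfrac18\Vol(P)$, giving a strong exceptional collection of that length. This step is routine once $P$ is in hand.

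The final and main step is the volume inequality $\tfrac18\Vol(P)\geq \tfrac34\rk K_0(Y)$, equivalently $\Vol(P)\geq 6\rk K_0(Y)$. Here I would use the rank formula \eqref{K_0_general}, which expresses $\rk K_0(Y)$ (up to the factor $(\rk N)!=3!=6$) as the volume of the union of simplices $\conv(v_{i_1},v_{i_2},v_{i_3},0)$ in $N_{\R}$, i.e. $\rk K_0(Y)=\Vol(\bigcup \conv(\dots,0))$ in normalized units. Via Gale duality (Lemma \ref{volumes} and Lemma \ref{vol_D_tors}) the volumes of these simplices on the $N$-side correspond to volumes of complementary simplices on the $\Pic$-side spanned by the $E_\sigma$, so I can re-express $\rk K_0(Y)$ as a sum of volumes of cells in $\Pic_{\R}(Y)$ built from the $\overline{E_i}$ and $K$. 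The task then becomes a dissection/comparison: I must show the polytope $P$ contains, up to the factor $6$, these lattice-volume contributions. Concretely, I expect $P$ to decompose into pieces indexed by the combinatorial types of primitive collections, and each piece to dominate the corresponding $|X_i|\cdot|X_{i+1}|\cdot|X_{i+3}|$ term in \eqref{rk_K_0} after the Gale-dual volume identification. The hard part will be making this dissection uniform over all nef-Fano stacks of Picard number three and verifying that the constant works out to exactly $\tfrac34$ (and not worse); the Batyrev classification into the five-block structure $\Sigma(1)=X_0\sqcup\dots\sqcup X_4$ should make this a finite, if delicate, case analysis, and the sharpness of $\tfrac34$ (matching the upper bound from Theorem \ref{<c}) is what forces the constant to be computed exactly rather than merely bounded.
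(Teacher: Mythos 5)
Your outline reproduces the strategy already announced in the paper's introduction (polytope, Fubini averaging, volume bound), and the Fubini step is indeed routine. But the two substantive steps are not actually supplied, and one assumption is wrong for the class of objects in the theorem.

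First, the polytope. The set of differences $L_sL_t^{-1}$ avoiding all forbidden sets is \emph{not} the interior of a centrally symmetric convex polytope --- it is the complement of a union of translated cone-like regions $K_I$, which is neither convex nor bounded in general --- so your claimed ``equivalence'' cannot hold; one only needs (and can only get) a sufficiently large centrally symmetric $P$ with $\Int(P)\cap K_{bad}=\emptyset$. Identifying such a $P$ whose volume is controllable is the first real idea of the proof: the paper takes the zonotope $\widehat{P}=\sum_{i\in\Sigma(1)}[0,\widehat{E_i}]$ in $\Pic_{\R}(Y)/(\R\cdot K_Y)$ (centrally symmetric because $\sum\widehat{E_i}=0$) and then the prism over it cut off by $|l(v)|\leq l(-K_Y)$. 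The zonotope structure is what later makes $\Vol(\widehat{P})$ computable as a sum of wedges $\widehat{W}_i\wedge\widehat{W}_{i+j}$ of the block sums $W_i=\sum_{j\in X_i}E_j$; your generic ``polytope cut out by the linear inequalities from the forbidden sets'' does not come with such a formula.

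Second, the inequality $\Vol(P)\geq 6\rk K_0(Y)$ is the heart of the argument and you only gesture at it. After re-expressing $\rk K_0(Y)$ on the Picard side via Gale duality (which you correctly anticipate), what remains is a genuine combinatorial inequality between two bilinear expressions in the $\widehat{W}_i$, namely
$$\sum_{r,\,1\leq j\leq t}\widehat{W}_r\wedge\widehat{W}_{r+j}\;\geq\;3\sum_{j_1+j_2>t}\widehat{W}_{j_1}\wedge\widehat{W}_{-j_2},$$
which the paper proves by an induction on $t$ with a three-case degeneration/merging argument on the vectors. This is not a ``finite, if delicate, case analysis'': the theorem concerns nef-Fano DM \emph{stacks}, for which the decomposition of $\Sigma(1)$ has $2t+1$ blocks for arbitrary $t\geq 1$ (Appendix, Theorem \ref{description}); the five-block Batyrev structure you invoke only covers smooth Fano \emph{varieties}. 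So your plan both omits the key inequality and restricts to a special case in which the theorem would not yet be proved.
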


\begin{proof}Let $\Sigma$ be a fan describing $Y.$ Then by Appendix, Theorem \ref{description}, there exists a number $t\in\Z_{>0}$ and a decomposition
$$\Sigma(1)=\bigsqcup\limits_{i\in\Z/(2t+1)}X_i$$
with $X_i\ne\emptyset,$ such that the primitive collections are precisely
$$X_i\cup X_{i+1}\cup\dots\cup X_{i+t-1},\quad i\in\Z/(2t+1)\Z.$$

Denote by $K_i$ (resp. $\widehat{K_i}$), $i\in\Z/(2t+1),$ the forbidden set corresponding to $X_i\cup X_{i+1}\cup\dots\cup X_{i+t}$
(resp. $X_{i-1}\cup X_{i-2}\cup\dots\cup X_{i-t}$), and by $K_{neg}$ the forbidden set corresponding to $\Sigma(1).$ Also put
$$K_{bad}:=\bigcup\limits_{i\in\Z/(2t+1)}(K_i\cup\widehat{K_i})\cup K_{neg}.$$
Then $K_{bad}$ is the set of all line bundles with non-zero higher cohomology (by Corollary \ref{non-trivial_cohom} and Appendix, Theorem \ref{description}).

Denote by $E_i\in \Pic Y,$ $i\in\Sigma(1),$ the invariant divisors. Put $$\widehat{\Pic}_{\R}(Y):=\Pic_{\R}(Y)/(\R\cdot K_Y).$$
Denote by $\pi:\Pic_{\R}(Y)\to \widehat{\Pic}_{\R}(Y)$ the projection, and by $\iota:\Pic(Y)\to\Pic_{\R}(Y)$ the inclusion. We will often
write $E_i$ instead of $\iota(E_i).$
Put $\widehat{E_i}:=\pi(E_i).$

Take the polytope
\begin{equation}\widehat{P}:=\sum\limits_{i\in\Sigma(1)}[0,\widehat{E_j}]\subset \widehat{\Pic}_{\R}(Y),\end{equation}
the Minkowski sum of the intervals $[0,\widehat{E_j}].$ It is easy to see that $\widehat{P}$ is centrally symmetric
with respect to zero (since $\sum\limits_{i\in\Sigma(1)}\widehat{E}_i=0$).
 Further, fix some functional $l:\Pic_{\R}(Y)\to\R$ such that $l(E_i)>0$ for $i\in\Sigma(1).$ Consider the polytope
\begin{equation}P:=\{v\in\Pic_{\R}(Y)\mid \pi(v)\in\widehat{P},\,|l(v)|\leq l(-K_Y)\}\subset\Pic_{\R}(Y).\end{equation}
It is also centrally symmetric (since $\widehat{P}$ is).
Denote by $\Int(P)$ the interior of $P.$

\begin{lemma}\label{lots_coll}For each $p\in\Pic_{\R}(Y),$ the set
$$\iota^{-1}(p+\frac12 \Int(P))\subset\Pic(Y)$$ can be ordered in such a way that it becomes a strong exceptional collection.\end{lemma}

\begin{proof} It suffices to prove that for any $L_1,L_2\in \iota^{-1}(p+\frac12 \Int(P))$ we have $H^{>0}(L_2L_1^{-1})=0.$
Further, this would follow from the absence of intersection:
\begin{equation}\label{intersection}\Int(P)\cap\iota(K_{bad})=\emptyset.\end{equation}
Now, choose some functionals $l_i:\widehat{\Pic}_{\R}(Y)\to\R,$ $i\in\Z/(2t+1),$ such that
$$l_i(\widehat{E_j})\begin{cases}\geq 0 &\text{for }j\in X_i\cup X_{i+1}\cup\dots\cup X_{i+t};\\
\leq 0 & \text{for }j\in X_{i-1}\cup X_{i-2}\cup\dots\cup X_{i-t}\end{cases}$$
(they exist by Appendix, Theorem \ref{description}).
Then $$\pi^*l_i(\iota(D))\leq l_i(-\sum\limits_{j\in X_i\cup X_{i+1}\cup\dots\cup X_{i+t}}\widehat{E_j})\quad\text{for }D\in K_i,$$
and $$\pi^*l_i(v)>l_i(-\sum\limits_{j\in X_i\cup X_{i+1}\cup\dots\cup X_{i+t}}\widehat{E_j})\quad\text{for }v\in \Int(P).$$
Therefore, $\Int(P)\cap\iota(K_i)=\emptyset.$ Analogously, $\Int(P)\cap\iota(\widehat{K_i})=\emptyset.$ Finally, we have
$$l(\iota(D))\leq l(K_Y)\quad\text{for }D\in K_{neg},$$
and
$$l(v)>l(K_Y)\quad\text{for }v\in\Int(P).$$
Therefore, $\Int(P)\cap\iota(K_{neg})=\emptyset.$ This proves \eqref{intersection}. Lemma is proved.
\end{proof}

Fix the volume form $\omega$ on $\Pic_{\R}(Y)$ such that $|\omega(D_1,D_2,D_3)|=|\Ker(\iota)|$ for
any $\Z$-basis $(D_1,D_2,D_3)$ of $\iota(\Pic(Y)).$ Below we take volumes with respect to this form.

\begin{lemma}\label{Fubini_lemma}There exists a point $p\in\Pic_{\R}$ such that
\begin{equation}\label{Fubini}|\iota^{-1}(p+\frac12 \Int(P))|\geq \frac18\Vol(P).\end{equation}\end{lemma}

\begin{proof}This follows easily from Fubini's theorem. Namely, fix some $\Z$-basis $(D_1,D_2,D_3)$ of $\iota(\Pic(Y)).$
Put $C:=[0,1)^3,$ and take the measure $dx\wedge dy\wedge dz$ on $C.$ Further, take a measure on $\Z^3$
such that the measure of each point equals to $|\Ker(\iota)|.$ Then we have an isomorphism of spaces with measure:
$$C\times\Z^3\stackrel{\sim}{\to}\Pic_{\R}(Y),\,\, ((t_1,t_2,t_3),(m_1,m_2,m_3))\mapsto (t_1+m_1)D_1+(t_2+m_2)D_2+(t_3+m_3)D_3.$$
Denote by $q:\Pic_{\R}(Y)\to C$ the resulting projection. Then, by Fubini's theorem,
$$\frac18\Vol(P)=\int\limits_{C}|(|\Ker(\iota)|\cdot |q^{-1}(t_1,t_2,t_3)\cap \frac12 \Int(P)|).$$
Since $\Vol(C)=1,$ there exists $(t_1,t_2,t_3)\in C$ such that
$$\frac18\Vol(P)\leq |\Ker(\iota)|\cdot |q^{-1}(t_1,t_2,t_3)\cap \frac12 \Int(P)|=|\iota^{-1}(t_1D_1+t_2D_2+t_3D_3+\frac12 \Int(P))|.$$
Hence, \eqref{Fubini} holds for $p=t_1D_1+t_2D_2+t_3D_3.$ Lemma is proved.
\end{proof}

Now we will obtain the lower bound on $\Vol(P).$

\begin{lemma}\label{Vol/rk_K_0}The following inequality holds:
$$\Vol(P)\geq 6\rk K_0(Y).$$\end{lemma}

\begin{proof}Take the volume form $\widehat{\omega}$ on $\widehat{\Pic}_{\R}(Y)$ such that
$$\omega=\pi^*(\widehat{\omega})\wedge dl.$$
Using the form $\widehat{\omega}$ we identify $\Lambda^2(\widehat{\Pic}_{\R}(Y))\cong\R.$
Then for any $G_1,G_2,G_3\in \Pic_{\R}(Y)$ we have
\begin{equation}\omega(G_1,G_2,G_3)=l(G_1)\pi(G_2)\wedge\pi(G_3)+l(G_2)\pi(G_3)\wedge\pi(G_1)+l(G_3)\pi(G_1)\wedge\pi(G_2).\end{equation}

Now put
\begin{equation}W_i:=\sum\limits_{j\in X_i} E_j,\quad i\in\Z/(2t+1).\end{equation}
Put $\widehat{W}_i:=\pi(W_i).$ We may and will assume  that
$$\widehat{W}_i\wedge \widehat{W}_{i+j}\geq 0\quad\text{for }i\in\Z/(2t+1),\, 1\leq j\leq t$$
(otherwise we multiply $\omega$ and $\widehat{\omega}$ by $(-1)$). Then we have
\begin{equation}\label{geq0_1}(\widehat{W}_i+\dots+\widehat{W}_{i+t})\wedge (\widehat{W}_{i+1}+\dots+\widehat{W}_{i+t})=\sum\limits_{j=1}^t \widehat{W}_i\wedge \widehat{W}_{i+j}\geq 0.\end{equation}
Analogously,
\begin{equation}\label{geq0_2}(\widehat{W}_i+\dots+\widehat{W}_{i+t-1})\wedge (\widehat{W}_i+\dots+\widehat{W}_{i+t})\geq 0.\end{equation}
It follows from \eqref{geq0_1} and \eqref{geq0_2} that
\begin{multline}\Vol(\widehat{P})\geq \sum\limits_{i\in\Z/(2t+1)}\frac12((\widehat{W}_i+\dots+\widehat{W}_{i+t})\wedge (\widehat{W}_{i+1}+\dots+\widehat{W}_{i+t})\\+
(\widehat{W}_i+\dots+\widehat{W}_{i+t-1})\wedge (\widehat{W}_i+\dots+\widehat{W}_{i+t}))=\sum_{\substack{i\in\Z/(2t+1),\\
1\leq j\leq t}}\widehat{W}_i\wedge\widehat{W}_{i+j}.\end{multline}
Hence,
\begin{equation}\label{Vol(P)}\Vol(P)=2l(-K_Y)\Vol(\widehat{P})\geq 2(\sum\limits_{i\in\Z/(2t+1)}l(W_i))(\sum_{\substack{i\in\Z/(2t+1),\\
1\leq j\leq t}}\widehat{W}_i\wedge\widehat{W}_{i+j}).\end{equation}

We are going to obtain a similar upper bound on $\rk K_0(Y).$ From \eqref{K_0_general}, Lemma \ref{volumes} and Lemma \ref{vol_D_tors} it follows that $\rk K_0(Y)$ equals to the sum of
$|\omega(E_{u_1},E_{u_2},E_{u_3})|$ over all subsets $\{u_1,u_2,u_3\}\subset\Sigma(1)$ which are complements to the maximal cones.
These are precisely sets $\{u_1,u_2,u_3\}\subset\Sigma(1).$ such that for some $i\in\Z/(2t+1),$ $1\leq j_1,j_2\leq t,$ $j_1+j_2>t$ we have
$$u_1\in X_i,\quad u_2\in X_{i+j_1},\quad u_3\in X_{i-j_2}.$$
Therefore, we have
\begin{equation}\label{rk_K_0_stack}\rk K_0(Y)=
\sum\limits
_{i\in\Z/(2t+1)}l(W_i)(\sum_{\substack{1\leq j_1,j_2\leq t,\\
j_1+j_2>t}}\widehat{W}_{i+j_1}\wedge\widehat{W}_{i-j_2}). \end{equation}

{\noindent {\bf Sublemma.}} {\it Suppose that we are given with collection of vectors $g_i\in\widehat{\Pic}_{\R}(Y),$
 $i\in\Z/(2t+1),$ with $\sum\limits_{i\in\Z/(2t+1)}g_i=0.$ Suppose that that there exist non-zero functionals $f_i:\widehat{\Pic}_{\R}(Y)\to\R$ such that
$$f_i(g_j)\begin{cases}\geq 0 & \text{for }j=i,i+1,\dots,i+t\\
\leq 0 & \text{for }j=i-1,i-2,\dots,i-t.\end{cases}$$ Assume that
$$g_i\wedge g_{i+j}\geq 0\quad\text{for }i\in\Z/(2t+1),\,1\leq j\leq t.$$ Then
\begin{equation}\sum_{\substack{r\in\Z/(2t+1),\\
1\leq j\leq t}}g_r\wedge g_{r+j}\geq 3\sum_{\substack{1\leq j_1,j_2\leq t,\\
j_1+j_2>t}}g_{j_1}\wedge g_{-j_2}.\end{equation}}

\begin{proof}First note that
$$\sum_{\substack{1\leq j_1,j_2\leq t,\\
j_1+j_2>t}}g_{j_1}\wedge g_{-j_2}=\sum\limits_{j=1}^t(g_{-j}\wedge (\sum\limits_{k=1}^tg_{k-j}+\sum\limits_{k=j+1}^tg_{-k}))=\sum_{\substack{1\leq j_1\leq t,\\
0\leq j_2\leq t-j_1}}g_{-j_1}\wedge g_{j_2}.$$
Similarly,
$$\sum_{\substack{1\leq j_1,j_2\leq t,\\
j_1+j_2>t}}g_{j_1}\wedge g_{-j_2}=\sum_{\substack{0\leq j_1\leq t,\\
1\leq j_2\leq t-j_1}}g_{-j_1}\wedge g_{j_2}.$$
Therefore,
\begin{multline*}\sum_{\substack{r\in\Z/(2t+1),\\
1\leq j\leq t}}g_r\wedge g_{r+j}-3\sum_{\substack{1\leq j_1,j_2\leq t,\\
j_1+j_2>t}}g_{j_1}\wedge g_{-j_2}=\sum_{\substack{r\in\Z/(2t+1),\\
1\leq j\leq t}}g_r\wedge g_{r+j}-\sum_{\substack{1\leq j_1,j_2\leq t,\\
j_1+j_2>t}}g_{j_1}\wedge g_{-j_2}\\-\sum_{\substack{1\leq j_1\leq t,\\
0\leq j_2\leq t-j_1}}g_{-j_1}\wedge g_{j_2}-\sum_{\substack{0\leq j_1\leq t,\\
1\leq j_2\leq t-j_1}}g_{-j_1}\wedge g_{j_2}\\=\sum_{\substack{1\leq j_1<j_2\leq t}}g_{-j_2}\wedge g_{-j_1}
+\sum_{\substack{1\leq j_1<j_2\leq t}}g_{j_1}\wedge g_{j_2}-\sum_{\substack{1\leq j_1\leq t,\\
1\leq j_2\leq t-j_1}}g_{-j_1}\wedge g_{j_2}.\end{multline*}

Thus, we are left to prove the following inequality:
\begin{equation}\label{main}\sum_{\substack{1\leq j_1<j_2\leq t}}g_{-j_2}\wedge g_{-j_1}
+\sum_{\substack{1\leq j_1<j_2\leq t}}g_{j_1}\wedge g_{j_2}-\sum_{\substack{1\leq j_1\leq t,\\
1\leq j_2\leq t-j_1}}g_{-j_1}\wedge g_{j_2}\geq 0.\end{equation}

We proceed by induction on $t.$ For $t=1,$ the LHS of \eqref{main} equals to zero, and there is nothing to prove.

Suppose that \eqref{main} is proved in the case $t\leq m.$ Let us prove it for $t=m+1.$
Consider the cases

{\it Case 1:} for some $j\ne 0$ we have $g_j=0.$ We may and will assume that $j\in \{1,\dots,t\}$ (by the symmetry).
Form another collection $g_i'\in\widehat{\Pic}_{\R}(Y),$
$i\in\Z/(2t-1),$ given by the formula
$$(g_0',g_1',\dots,g_{2t-2}')=(g_0,g_1,\dots,g_{j-1},g_{j+1},\dots,g_{j+t}+g_{j+t+1},\dots,g_{2t})$$
(in the case $j=t$ we have $g_0'=g_{2t}+g_{2t+1}$). Clearly, this new collection satisfies the assumptions of Sublemma, and one computes that

\begin{multline}\label{reduce}(\sum_{\substack{1\leq j_1<j_2\leq t}}g_{-j_2}\wedge g_{-j_1}
+\sum_{\substack{1\leq j_1<j_2\leq t}}g_{j_1}\wedge g_{j_2}-\sum_{\substack{1\leq j_1\leq t,\\
1\leq j_2\leq t-j_1}}g_{-j_1}\wedge g_{j_2})-\\
-(\sum_{\substack{1\leq j_1<j_2\leq t-1}}g_{-j_2}'\wedge g_{-j_1}'
+\sum_{\substack{1\leq j_1<j_2\leq t-1}}g_{j_1}'\wedge g_{j_2}'-\sum_{\substack{1\leq j_1\leq t-1,\\
1\leq j_2\leq t-1-j_1}}g_{-j_1}'\wedge g_{j_2}')=\\
=g_{j+t}\wedge g_{j+t+1}\geq 0.\end{multline}
Then, inequality \eqref{main} follows from \eqref{reduce} and inductive hypothesis.

{\it Case 2:} we have $g_j\ne 0$ for $j\ne 0,$ but for some $j\ne 0,t+1$ we have $g_j=-\kappa g_{t+j},$ $\kappa>0.$
We may and will assume that $\kappa\geq 1.$ We form another collection $g_i'\in\widehat{\Pic}_{\R}(Y),$
$i\in\Z/(2t+1),$ given by the formula
$$g_i':=\begin{cases}g_j+g_{t+j} &\text{for }i=j;\\
0 &\text{for }i=j+t;\\
g_i &\text{for }i\in(\Z/(2t+1))\setminus\{j,j+t\}.\end{cases}$$
This new collection obviously satisfies the assumptions of Sublemma and
we have
\begin{multline}\label{reduce2}(\sum_{\substack{1\leq j_1<j_2\leq t}}g_{-j_2}\wedge g_{-j_1}
+\sum_{\substack{1\leq j_1<j_2\leq t}}g_{j_1}\wedge g_{j_2}-\sum_{\substack{1\leq j_1\leq t,\\
1\leq j_2\leq t-j_1}}g_{-j_1}\wedge g_{j_2})-\\
-(\sum_{\substack{1\leq j_1<j_2\leq t}}g_{-j_2}'\wedge g_{-j_1}'                                     +\sum_{\substack{1\leq j_1<j_2\leq t}}g_{j_1}'\wedge g_{j_2}'-\sum_{\substack{1\leq j_1\leq t,\\
1\leq j_2\leq t-j_1}}g_{-j_1}'\wedge g_{j_2}')=\\
=\begin{cases}(g_{j+1}+\dots+g_{j+t-1})\wedge g_{j+t}\geq 0 &\text{if }j\in\{1,\dots,t\};\\
g_{j+t}\wedge (g_{j-1}+\dots+g_{j-t})\geq 0 &\text{if }j\in\{t+2,\dots,2t\}.\end{cases}.\end{multline}
Hence, we are reduced to the Case 1.

We are left with

{\it Case 3:} we have that $g_j,$ $j\ne 0,$ are pairwise linearly independent. Take (unique) $\kappa>0$ such that
$g_{2t}+\kappa g_t$ is linearly dependent with $g_{t-1}.$ Put $$\kappa':=\min(\kappa,1).$$
Form another collection $g_i'\in\widehat{\Pic}_{\R}(Y),$
$i\in\Z/(2t+1),$ given by the formula
$$g_i':=\begin{cases}g_{2t}+\kappa'g_t &\text{for }i=2t;\\
(1-\kappa')g_t &\text{for }i=t;\\
g_i &\text{for }i\in(\Z/(2t+1))\setminus\{t,2t\}.\end{cases}$$
This new collection obviously satisfies the assumptions of Sublemma and
we have
\begin{multline}\label{reduce3}(\sum_{\substack{1\leq j_1<j_2\leq t}}g_{-j_2}\wedge g_{-j_1}
+\sum_{\substack{1\leq j_1<j_2\leq t}}g_{j_1}\wedge g_{j_2}-\sum_{\substack{1\leq j_1\leq t,\\
1\leq j_2\leq t-j_1}}g_{-j_1}\wedge g_{j_2})-\\
-(\sum_{\substack{1\leq j_1<j_2\leq t}}g_{-j_2}'\wedge g_{-j_1}'
+\sum_{\substack{1\leq j_1<j_2\leq t}}g_{j_1}'\wedge g_{j_2}'-\sum_{\substack{1\leq j_1\leq t,\\
1\leq j_2\leq t-j_1}}g_{-j_1}'\wedge g_{j_2}')=\\
=\kappa'g_{t}\wedge(g_{t+1}+\dots+g_{2t-1})\geq 0.\end{multline}
We are reduced either to the Case 1 (if $\kappa'=1$) or to the Case 2 (if $\kappa'<1$).

In all cases the inductive statement is proved for $t=m+1.$ Sublemma is proved.
\end{proof}

Finally, from \eqref{Vol(P)}, Sublemma and \eqref{rk_K_0_stack} we get the following chain of equalities and inequalities:
\begin{multline*}\Vol(P)\geq 2(\sum\limits_{i\in\Z/(2t+1)}l(W_i))(\sum_{\substack{i\in\Z/(2t+1),\\
1\leq j\leq t}}\widehat{W}_i\wedge\widehat{W}_{i+j})\\
\geq 6\sum\limits_{i\in\Z/(2t+1)}l(W_i)(\sum_{\substack{1\leq j_1,j_2\leq t,\\
j_1+j_2>t}}\widehat{W}_{i+j_1}\wedge\widehat{W}_{i-j_2})=6\rk K_0(Y).\end{multline*}
Lemma is proved.\end{proof}

From Lemmas \ref{lots_coll}, \ref{Fubini_lemma} and \ref{Vol/rk_K_0} it follows that for some $p\in\Pic(\R)$
the set $\iota^{-1}(p+\frac12\Int(P))$ can be ordered in such a way that it becomes a strong exceptional collection of line bundles
of length at least $\frac18\cdot 6\rk K_0(Y)=\frac34\rk K_0(Y).$ Theorem is proved.\end{proof}

\appendix

\section{Smooth projective toric DM stacks with Picard number three}
\label{Appendix}

Here we describe combinatorial structure of the fans defining smooth projective toric DM stacks with Picard number three.
Let $Y$ be such a stack, $\Sigma$ a fan in a lattice $N$ defining it, and $v_i\in N,$ $i\in\Sigma(1)$ are marked vectors
on one-dimensional cones. Denote by $E_i\in\Pic(Y),$ $i\in\Sigma(1),$ the invariant divisors.

\begin{theo}\label{description}1) There exists $t\geq 1$ and a decomposition $$\Sigma(1)=\bigsqcup\limits_{i\in\Z/(2t+1)}X_i,$$
such that the primitive collections
are precisely
\begin{equation}\label{primitive}X_i\cup X_{i+1}\cup\dots\cup X_{i+t-1},\quad i\in\Z/(2t+1).\end{equation}

2) If $Y$ is Fano (resp. nef-Fano) then there exist non-zero functionals $l_i:\Pic_{\R}(Y)\to\R,$ $i\in\Z/(2t+1)$ such that $l_i(K_Y)=0$ and
$l_i(E_j)>0$ (resp. $l_i(E_j)\geq 0$) for $j\in X_i\cup X_{i+1}\cup\dots\cup X_{i+t},$ and
$l_i(E_j)<0$ (resp. $l_i(E_j)\leq 0$) for $j\in X_{i-1}\cup X_{i-2}\cup\dots\cup X_{i-t}.$

3) The subsets $I\subset\Sigma(1)$ such that $|C_I|$ has non-trivial reduced homology are precisely the following:
$\emptyset,$ $\Sigma(1),$ $X_i\cup X_{i+1}\cup\dots\cup X_{i+t},$ $X_{i-1}\cup X_{i-2}\cup\dots\cup X_{i-t},$ $i\in\Z/(2t+1).$ \end{theo}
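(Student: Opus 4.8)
The plan is to reduce the entire statement to a two-dimensional picture via Gale duality and then to run the Gale-dual incarnation of Batyrev's classification of Picard-rank-three smooth complete toric varieties \cite{Ba}. The point is that parts (1) and (3) are purely combinatorial and hence insensitive to the lattice, so the passage from varieties to DM stacks is free; part (2) is where the (nef-)Fano hypothesis enters, through Proposition \ref{Fano-cond}.

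First I would set $U:=\Pic_{\R}(Y)\cong\R^3$ and pass to the quotient $\overline{U}:=U/(\R\cdot K_Y)\cong\R^2$, writing $\overline{E_i}:=\pi(E_i)$. Since $K_Y=-\sum_i E_i$ we have $\sum_i\overline{E_i}=0$, and completeness of $\Sigma$ together with Proposition \ref{is_convex}(iii) gives that the directions of the $\overline{E_i}$ positively span $\overline{U}$ (they surround the origin). By Proposition \ref{compl_to_faces}, a subset $J\subset\Sigma(1)$ is the ray set of a cone of $\Sigma$ exactly when $0$ lies in the relative interior of $\conv(\overline{E_k}:k\notin J)$; dualizing, $J$ is a non-face precisely when the complementary vectors lie in a closed half-plane, i.e. when $J\supseteq\{i:f(\overline{E_i})<0\}$ for some nonzero functional $f$ on $\overline{U}$. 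Hence the primitive collections are exactly the inclusion-minimal nonempty negative sets $P_f:=\{i:f(\overline{E_i})<0\}$.

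The heart of the argument is to analyze these negative sets by rotating $f$ around the circle. As $f$ sweeps, $P_f$ consists of the directions falling in the open half-circle opposite to $f$, and it changes by one element each time $f$ crosses a direction perpendicular to some $\overline{E_i}$; because $\sum_i\overline{E_i}=0$ the pattern of entry and exit critical angles is invariant under $f\mapsto -f$, which is the source of the central symmetry. I would group the indices into blocks by declaring $i\sim j$ iff $\overline{E_i}$ and $\overline{E_j}$ are never separated by a minimal negative half-plane (equivalently, $i$ and $j$ lie in exactly the same primitive collections); these blocks inherit the cyclic angular order, and the sweep shows that every inclusion-minimal $P_f$ is a run of consecutive blocks. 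The main obstacle, and the genuinely rigid part of the classification, is to prove that all these minimal runs have the same length $t$ and that the number of blocks is exactly $2t+1$: this is forced by the central symmetry (an open half-circle and its complementary closed half-circle split an odd number of blocks into $t$ and $t+1$) together with a non-crossing property of minimal negative sets, and it is here that the low Picard number is used. This yields the decomposition and the description of primitive collections in part (1); the degenerate configurations (several $\overline{E_i}$ sharing a direction, antipodal pairs, or some $\overline{E_i}=0$) must be absorbed into this analysis and form the bulk of the technical bookkeeping.

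For part (2) I would read off the functionals $l_i$ from the (nef-)Fano hypothesis: by Proposition \ref{Fano-cond} the relevant polytope is convex (resp. has the $v_i$ as vertices and is simplicial), so each primitive collection $X_{i-1}\cup\dots\cup X_{i-t}$, being a non-face, admits a separating functional on $\overline{U}$; pulling it back along $\pi$ produces $l_i$ with $l_i(K_Y)=0$ and the asserted sign pattern, the strict inequalities in the Fano case coming from the $v_i$ being genuine vertices and the weak ones from mere convexity in the nef-Fano case. Finally, for part (3) I would invoke Lemma \ref{union}: any $I$ with $\bar H_\bullet(|C_I|)\ne 0$ is a union of primitive collections, hence a union of arcs of consecutive blocks. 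Using that a subset is a face iff it contains no primitive collection, $|C_I|$ is the subcomplex on $I$ whose simplices are the primitive-collection-free subsets, and a direct computation (recognizing the nonzero cases as boundary spheres of cross-polytope-type complexes and the remaining unions as contractible) shows that the reduced homology is nonzero only for $\emptyset$, for $\Sigma(1)$, for a single primitive collection $X_{i-1}\cup\dots\cup X_{i-t}$, and for a run $X_i\cup\dots\cup X_{i+t}$ of $t+1$ consecutive blocks. I expect part (1) to be the main obstacle and parts (2)--(3) to follow essentially formally from it.
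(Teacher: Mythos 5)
Your reduction to the two-dimensional Gale-dual picture is the right idea, and your treatments of parts (2) and (3) (separating functionals pulled back along $\pi$, and Lemma \ref{union} plus Alexander duality plus the arc combinatorics) match the paper's. But there is a genuine gap at the very first step of part (1): you quotient by $\R\cdot K_Y$ and then invoke Propositions \ref{is_convex} and \ref{compl_to_faces} to say that $J$ spans a cone of $\Sigma$ iff $0$ lies in the relative interior of $\conv(\overline{E_k}:k\notin J)$. Those propositions are only valid under the hypotheses of Propositions \ref{contains_0} and \ref{is_convex}, i.e.\ when the $v_i$ are vertices of a convex polytope containing the origin in its interior --- which, by Proposition \ref{Fano-cond}, is exactly the Fano condition. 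Part (1) of the theorem is asserted for \emph{every} smooth projective toric DM stack with Picard number three, and for a non-nef-Fano $\Sigma$ the fan is not the face fan of $\conv(v_i)$, so your criterion modulo $K_Y$ identifies the wrong collection of cones. The paper avoids this by choosing a $\Q$-ample $L=\sum_i a_iE_i$ with $a_i>0$ (whose existence, with the convexity of the rescaled polytope, is supplied by [FLTZ, Thm.~4.4]) and working modulo $\R\cdot L$; since the $a_i$ are positive this does not change which subsets positively span, so the subsequent combinatorics is unaffected --- the fix is easy, but it is a fix you need, and it also means your opening claim that part (1) is ``insensitive'' to the geometry is not quite right: projectivity enters essentially.

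A secondary concern is the heart of your part (1). The assertion that $\sum_i\overline{E_i}=0$ makes the configuration ``centrally symmetric'' is not correct (a planar vector configuration summing to zero need not be invariant under $v\mapsto-v$), so central symmetry cannot be the mechanism forcing an odd number $2t+1$ of blocks and equal lengths of all minimal negative sets. What actually forces this structure --- and what the paper uses --- is the pair of conditions coming from Proposition \ref{compl_to_faces} relative to the ample class: every complement of a maximal cone is a positively spanning \emph{triple} (so no two of the projected divisors positively span, but suitable triples do), which pins down the fan of cones $A_{X_i}$ going cyclically around the plane and yields the $X_i$ as the maximal subsets $X$ with $-E_k\notin\sum_{j\in X}\R_{\geq 0}E_j$ for all $k$. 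Your sweeping argument can be made to work, but as written the key step is asserted rather than derived, and the stated reason for it is wrong. Your part (2) in the nef-Fano case is also vaguer than the paper's (which takes limits of the functionals for ample $L_n\to -K_Y$), but that is a matter of detail rather than a gap.
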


\begin{proof} 1) Take any $\Q$-ample line bundle $L\in\Pic(Y).$ Then by \cite{FLTZ} (Theorem 4.4), it can be written
as
$$L=\sum\limits_{i\in\Sigma(1)}a_iE_i\in\Pic_{\Q}(Y),\quad a_i\in\Q_{>0},$$
and the polytope $$\bigcup\limits_{\langle v_{i_1},\dots,v_{i_{\dim Y}}\rangle\in\Sigma(\dim Y)}
\conv(\frac{v_{i_1}}{a_1},\dots,
\frac{v_{i_{\dim Y}}}{a_{\dim Y}},0)$$
is convex, with vertices being precisely all $v_i.$

Denote by $\pi:\Pic_{\R}(Y)\to \Pic_{\R}(Y)/(\R\cdot L)$ the projection.

For each set $\{u_1,u_2,u_3\}\subset\Sigma(1)$
which is a completion to some maximal cone, we have by Proposition \ref{compl_to_faces}
\begin{equation}\label{ample}0\in\R_{>0}\pi(E_{u_1})+\R_{>0}\pi(E_{u_2})+\R_{>0}\pi(E_{u_3}).\end{equation}
Moreover, for any $u_1,u_2\in \Sigma(1)$ we have again by Proposition \ref{compl_to_faces}
\begin{equation}\label{not_belong_stack}0\notin\R_{>0}\pi(E_{u_1})+\R_{>0}\pi(E_{u_2}).\end{equation}

Now the desired subsets $X_i\subset\Sigma(1)$ are defined as maximal subsets $X\subset \Sigma(1)$ with the following property:
$$-E_k\notin\sum\limits_{j\in X}\R_{\geq 0} E_j=:A_X\quad\text{for each }k\in\Sigma(1).$$
It follows from \eqref{not_belong_stack} and \eqref{ample} that:

(i) the number of such $X$ is odd and at least three, say $2t+1;$

(ii) they are disjunctive and their union is $\Sigma(1);$

(iii) for different $X_i$ and $X_j$ we have that
$A_{X_i}\cap A_{X_j}=\{0\}.$

We order the $X_i$ cyclically in such a way that the cones $A_{X_0},\dots A_{X_{2t}}$
go in the anti-clockwise direction (for some orientation on $\Pic_{\R}(Y)/(\R\cdot L)$). It is clear from \eqref{ample} that primitive collections are precisely as
in \eqref{primitive}.

2) If $Y$ is Fano, then the statement follows from the proof of 1).
Let $Y$ be nef-Fano. Then  there exist sequence $a_{\sigma,n}>0,$ with $\sigma\in\Sigma(1),$ $n\geq 1,$ such that
$$\lim\limits_{n\to\infty}a_{\sigma,n}=1,\quad\text{and}\quad L_n=\sum\limits_{\sigma\in\Sigma(1)}a_{\sigma,n}E_{\sigma}\text{ is }\Q-\text{ample for all }n.$$
Denote by $l_{i,n},$ $i\in\Z/(2t+1),$ the desired functionals for $L_n$ instead of $-K_Y.$ Assume that $||l_{i,n}||=1$ for some norm on $\Pic_{\R}(Y).$
Then each sequence $\{l_{i,n}\}_{n=1}^{\infty}$ has some partial limit $l_i.$ The functionals $l_i$ satisfy the desired properties.

3) It suffices to remind that the following holds:

(iv) if $\bar{H}_{\cdot}(C_I)\ne 0$ and $I\ne\emptyset,$ then $I$ is a union of primitive collections (Lemma \ref{union});

(v) if $\bar{H}_{\cdot}(C_I)\ne 0,$ then also $\bar{H}_{\cdot}(C_{\Sigma(1)\setminus I})\ne 0.$

(vi) if $I$ is a primitive collection, then $\bar{H}_{\cdot}(C_I)\ne 0.$

The assertion immediately follows from (iv), (v), (vi) and part 1).
\end{proof}

\end{document}